\definecolor{darkred}{rgb}{0.5,0,0}
\definecolor{darkgreen}{rgb}{0,0.5,0}
\definecolor{darkblue}{rgb}{0,0,0.5}
\newtheorem{theorem}{Theorem}[section]
\newtheorem{corollary}[theorem]{Corollary}
\newtheorem{proposition}[theorem]{Proposition}
\newtheorem{lemma}[theorem]{Lemma}
\newtheorem{lem}[theorem]{}
\theoremstyle{definition}
\newtheorem{definition}[theorem]{Definition}
\theoremstyle{remark}
\newtheorem{remark}[theorem]{Remark}
\newtheorem{example}[theorem]{Example}
\newcommand{\blem}{\begin{lem} \rm}
\newcommand{\elem}{\end{lem}}
\newcommand\M{\mathcal{M}}
\newcommand\D{\mathcal{D}}
\renewcommand\M{\mathcal{M}}
\newcommand\QQ{\mathcal{Q}}
\newcommand{\U}{\mathcal{U}}
\newcommand{\XX}{\mathcal{X}}
\newcommand{\R}{\mathbb{R}}
\newcommand{\C}{\mathbb{C}}
\newcommand{\cC}{\mathcal{C}}
\newcommand{\Z}{\mathbb{Z}}
\newcommand{\Q}{\mathbb{Q}}
\renewcommand{\P}{\mathbb{P}}
\newcommand\lie[1]{\mathfrak{#1}}
\newcommand{\g}{\lie{g}}
\newcommand{\on}{\operatorname}
\newcommand{\Jac}{\on{Jac}}
\newcommand{\Bl}{\on{Bl}}
\newcommand{\quot}{\on{quot}}
\newcommand{\Ch}{\on{Ch}}
\newcommand{\Td}{\on{Td}}
\newcommand{\dual}{\vee}
\newcommand{\Fr}{\on{Fr}}
\newcommand{\fin}{\on{fin}}
\newcommand{\Ad}{ \on{Ad} }
\newcommand{\Hom}{ \on{Hom}}
\newcommand{\Ind}{ \on{Ind}}
\newcommand{\Res}{\on{Resid}}
\newcommand{\Resid}{\on{Resid}}
\newcommand{\codim}{\on{codim}}
\newcommand\dirac{/\kern-1.2ex\partial} 
\newcommand\qu{/\kern-.7ex/} 
\newcommand\lqu{\backslash \kern-.7ex \backslash} 
\newcommand\dr{r_+ \kern-.7ex - \kern-.7ex r_-}
\newcommand{\lev}{{\on{lev}}} 
\newcommand{\out}[1]   {{}}
\newcommand{\labell}\label
\renewcommand{\d}{{\mbox{d}}}
\newcommand{\ol}{\overline}
\newcommand\eps{\epsilon}
\newcommand{\f}{\frac}
\newcommand{\lan}{\langle}
\newcommand{\ran}{\rangle}
\newcommand{\hh}{{\f{1}{2}}}
\newcommand{\ti}{\tilde}
\newcommand\pt{\on{pt}}
\newcommand\rk{\on{rk}}
\renewcommand{\ss}{\on{ss}}
\newcommand\mE{\mathcal{E}}
\newcommand\MM{\mathfrak{M}}
\newcommand\Gr{\on{Gr}}
\newcommand\Map{\on{Map}}
\newcommand\ev{\on{ev}}
\newcommand\Eul{\on{Eul}}
\newcommand\Pic{\on{Pic}}
\newcommand\ul{\underline}
\newcommand\mO{\mathcal{O}}
\newcommand\reg{{\on{reg}}}
\newcommand\bdefn{\begin{definition}}
\newcommand\edefn{\end{definition}}
\newcommand\bea{\begin{eqnarray*}}
\newcommand\eea{\end{eqnarray*}}
\newcommand\bcv{\left[ \begin{array}{r} }
\newcommand\ecv{\end{array} \right] }
\newcommand\bma{\left[ \begin{array} }
\newcommand\ema{\end{array} \right]}
\newcommand\ben{\begin{enumerate}}
\newcommand\een{\end{enumerate}}
\newcommand\beq{\begin{equation}}
\newcommand\eeq{\end{equation}}
\newcommand\bex{\begin{example}}
\newcommand\bsj{\left\{ \begin{array}{rrr} }
\newcommand\esj{\end{array} \right\}}
\newcommand\Cone{\on{Cone}}
\newcommand\LL{\mathcal{L}}
\newcommand\eex{\end{example}}
\newcommand\sx{*\kern-.5ex_X}
\newcommand{\fr}{{\on{fr}}}
\newcommand{\cS}{{\mathcal{S}}}
\def\mathunderaccent#1{\let\theaccent#1\mathpalette\putaccentunder}
\def\putaccentunder#1#2{\oalign{$#1#2$\crcr\hidewidth \vbox
to.2ex{\hbox{$#1\theaccent{}$}\vss}\hidewidth}}
\begin{document}
\title[
Wall-crossing for Gromov-Witten invariants]
{A wall-crossing formula for Gromov-Witten invariants under variation of
  git quotient}

\author{Eduardo Gonz\'alez} 


\address{
Department of Mathematics
University of Massachusetts Boston
100 William T. Morrissey Boulevard
Boston, MA 02125}
  \email{eduardo@math.umb.edu}

\author{Chris T. Woodward}

\thanks{Partially supported by NSF grants
  DMS-1104670 and DMS-0904358.}

\address{Mathematics-Hill Center,
Rutgers University, 110 Frelinghuysen Road, Piscataway, NJ 08854-8019,
U.S.A.}  \email{ctw@math.rutgers.edu}

\begin{abstract} 
  We prove a quantum version of a wall-crossing formula of Kalkman
  \cite{ka:co}, \cite{le:sy2} that compares intersection pairings on
  geometric invariant theory (git) quotients related by a change in
  polarization.  Each expression in the classical formula is quantized
  in the sense that it is replaced by an integral over moduli spaces
  of certain stable maps; in particular, the wall-crossing terms are
  gauged Gromov-Witten invariants with smaller structure group.  As an
  application, we show that the genus zero graph Gromov-Witten
  potentials of quotients related by wall-crossings of crepant type
  are equivalent up to a distribution in one of the quantum parameters
  that is almost everywhere zero.  This is a version of the crepant
  transformation conjecture of Li-Ruan \cite{liruan:surg},
  Bryan-Graber \cite{bryan:crep}, Coates-Ruan \cite{cr:crep} etc. in
  cases where the crepant transformation is obtained by variation of
  git.
\end{abstract}

\maketitle

\ \vskip -.5in  \ 

 \tableofcontents

\section{Introduction} 

\subsection{Kalkman's wall-crossing formula}

According to the geometric invariant theory introduced by Mumford
\cite{mu:ge}, the {\em git quotient} $X \qu G$ of an action of a
complex reductive group $G$ on a projective variety $X$ equipped with
a polarization (ample equivariant line bundle) \(\LL\to X\)
has coordinate ring equal to the $G$-invariant part of the coordinate
ring on $X$.  Geometrically $X \qu G$ is the quotient of an open {\em
  semistable locus} $X^{\ss} \subset X$ by an equivalence relation,
where a point $x \in X$ is semistable if there is a non-constant
invariant section of a tensor power of the polarization that is
non-zero at the point.  If the action of $G$ on the semistable locus
has only finite stabilizers, then $X \qu G$ is the quotient of
$X^{\ss}$ by the action of $G$, by which we mean here the {\em
  stack-theoretic} quotient, see for example \cite{dejong:stacks}.  If
$X$ is in addition smooth, then $X \qu G$ is a smooth proper
Deligne-Mumford stack with projective coarse moduli space.  In
Kempf-Ness \cite{ke:le}, see also Mumford et al \cite{mu:ge}, the
coarse moduli space of the git quotient is identified with the
symplectic quotient of $X$ by a maximal compact subgroup of $G$.

The question of how the git quotient depends on the polarization, or
equivalently, choice of moment map is studied in a series of papers by
Guillemin-Sternberg \cite{gu:bi}, Brion-Procesi \cite{br:ac},
Dolgachev-Hu \cite{do:va}, and Thaddeus \cite{th:fl}. Under suitable
stable=semistable and smoothness conditions, the git quotient
$X \qu G$ undergoes a sequence of blow-ups and blow-downs. The class
of birational equivalences which appear via variation of git is
reasonably large. In fact, for a class of so-called {\em Mori dream
  spaces}, any birational equivalence can be written as a composition
of birational equivalences induced by variation of git \cite{hk:dream,
  mc:dream}.

The question of how the cohomology of the quotient depends on the
polarization is studied in Kalkman \cite{ka:co}, which proves a
wall-crossing formula for the intersection pairings under variation of
symplectic quotient.  Similar results, in the context of Donaldson
theory, are given in Ellingsrud-G\"ottsche \cite{eg:var}.  Let $X$ be
a smooth projective $G$-variety as above such that $G$ acts locally
freely on the semistable locus (that is, with finite stabilizers) and
$H_G(X)$ its equivariant cohomology with rational coefficients.  There
is a natural map
\[ \kappa_X^G: H_G(X) \to H(X \qu G) \]
studied in Kirwan's thesis \cite{ki:coh}, given by restriction to the
semistable locus $ H_G(X) \to H_G(X^{\ss}) $ and then descent under
the quotient $ H_G(X^{\ss}) \cong H(X \qu G) $.  Consider the simplest
case $G = \C^\times$.  Let $X \qu_\pm G$ denote the git quotients
corresponding to polarizations $\LL_\pm \to X_\pm$.  Let
\[\kappa_{X,\pm}^G: H_G(X) \to H(X \qu_\pm G)\] 
be the Kirwan maps and let
\[ \tau_{X \qu_\pm G}: H(X \qu_\pm G) \to \Q, \quad \alpha \mapsto
\int_{[X \qu_\pm G]} \alpha \]
denote integration over $X \qu_\pm G$.  Kalkman's formula expresses
the difference between the integrals $\tau_{X \qu_\pm G} \circ
\kappa_{X,\pm}^G$ as a sum of fixed point contributions from
components $X^{G,t} \subset X^G$ that are semistable for elements in
the rational Picard group $\Pic_\Q^G(X)$ interpolating between $\LL_-$
and $\LL_+$,
\[[\LL_t:=\LL_-^{\otimes (1 - t)/2} \otimes \LL_+^{\otimes (1 + t)/2}]
\in \Pic_\Q^G(X) \]
for some rational $t \in (-1,1)$.  Each such fixed point has a
contribution to the localization formula for the integral of a class
$\alpha \in H_G(X)$ over $X$ given as follows.  Let $\nu_{X^{G,t}} \to
X^{G,t}$ denote the normal bundle of the inclusion $X^{G,t} \to X$,
and $ \Eul_G(\nu_{X^{G,t}}) \in H_G(X^{G,t}) $ its $G$-equivariant
Euler class, or equivalently, equivariant Chern class of degree equal
to the real rank.  We identify $H(BG)$ with the polynomial ring
$\Q[\xi]$ in a single element $\xi$ of degree $2$, representing the
hyperplane class in the cohomology of $BG = \C P^\infty$.  If $m_t =
\codim(X^{G,t})$ and
\begin{equation} \label{splits}
 \nu_{X^{G,t}} = \bigoplus_{i = 1}^{m_t}
   \nu_{X^{G,t},i} \end{equation}
is a decomposition into line bundles with weights $\mu_i \in \Z$ then
\[ \Eul_G(\nu_{X^{G,t}}) = \prod_{i=1}^{m_t} \Eul_G(\nu_{X^{G,t},i}) =
\prod_{i=1}^{m_t} ( c_1(\nu_{X^{G,t},i}) + \mu_i \xi ) .\]
Since $G$ acts with no non-trivial fixed vectors on $\nu_{X^{G,t}}$,
the Euler class has an inverse
\[\Eul_G(\nu_{X^{G,t}})^{-1} \in H(X^{G,t})[\xi,\xi^{-1}]\]
after inverting the equivariant parameter $\xi$.  If one has a
splitting as in \eqref{splits} then the inverted class admits
an expansion 
\begin{eqnarray*}
\Eul_G\left(\nu_{X^{G,t}}\right)^{-1}  &=& 
\prod_{i=1}^{m_t} \left(\mu_i \xi\right)^{-1} \left(
1 + \frac{c_1\left(\nu_{X^{G,t},i}\right)}{\mu_i \xi} \right)^{-1}  \\
&=& 
\prod_{i=1}^{m_t} \left(\mu_i \xi\right)^{-1} \left(1 - \frac{c_1\left(\nu_{X^{G,t},i}\right)}{\mu_i \xi}  
+ \left( \frac{c_1\left(\nu_{X^{G,t},i}\right)}{\mu_i \xi} \right)^2 - \ldots \right) .
\end{eqnarray*}
The contribution from a fixed point component $X^{G,t} \subset X^G$ is
the integral of the restriction $\iota_{X^{G,t}}^* \alpha$ times the
inverted Euler class, denoted
\[ \tau_{X^{G,t}}: H_G(X) \to \Q[\xi,\xi^{-1}], \quad \alpha \mapsto
  \int_{[X^{G,t}]} \iota_{X^{G,t}}^* \alpha \cup
  \Eul_G(\nu_{X^{G,t}})^{-1} .\]
Let $\Res $ denote the residue, that is, the coefficient of
$\xi^{-1}$:
\[ \Res_\xi: \Q[\xi,\xi^{-1}] \to \Q, \quad \sum_{n \in \Z} a_n \xi^n
\mapsto a_{-1} .\]
(More invariantly, the residue should be a map
$\Q[\xi,\xi^{-1}] \d \xi \to \Q$, but we omit the one-form from the
notation.)

\begin{theorem}[Kalkman wall-crossing formula, circle group case] \label{kalk1} 
Let $G = \C^\times$ and let $X$ be a smooth projective $G$-variety
equipped with polarizations $\LL_\pm \to X$.  Suppose that
stable=semistable for the $G$-action on $\P(\LL_+\oplus \LL_-)$.  Then
\begin{equation}  \label{kalkcirc}
\tau_{X \qu_+ G}\circ  \kappa_{X,+}^G - \tau_{X \qu_- G}
\circ \kappa_{X,-}^G =
\sum_{t \in (-1,1)} \Res_\xi \tau_{X^{G,t}} .\end{equation}
\end{theorem}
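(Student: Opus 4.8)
The plan is to reduce the formula to a single wall and then to a local computation around one fixed-point component. First I would interpolate with the family $L_t = L_-^{(1-t)/2}L_+^{(1+t)/2}$ from the statement and invoke the variation-of-GIT analysis of Dolgachev--Hu \cite{do:va} and Thaddeus \cite{th:fl}: the assignment $t \mapsto X^{\ss}(L_t)$ is locally constant off a finite \emph{wall set} $\{t_1 < \cdots < t_N\} \subset (-1,1)$, and, under the stable${}={}$semistable hypothesis, the locus of points semistable for $L_{t_j}$ but not for nearby $L_t$ is swept out by a union of connected components of the fixed-point set $X^G$. Picking regular values $s_0$ just above $-1$, $s_N$ just below $1$, and $s_j \in (t_j,t_{j+1})$, the hypothesis on $L_\pm$ gives $X\qu_{s_0}G \cong X\qu_- G$ and $X\qu_{s_N}G \cong X\qu_+ G$ compatibly with the Kirwan maps, so the left-hand side of \eqref{kalkcirc} telescopes into a sum of ``jumps'' across the walls $t_j$. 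Since distinct fixed components meeting one wall have disjoint $G$-orbits, it is enough to prove \eqref{kalkcirc} when there is a single wall $t = t_0$ and a single fixed component $F := X^{G,t_0}$, in which case the right-hand side is $\Res_\xi \tau_{X,G,t_0}^G(\alpha)$.

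Next I would fix the local model of the flip across $t_0$. Decompose $\nu_F = \nu_F^+ \oplus \nu_F^-$, where $\nu_F^\pm = \bigoplus_{\pm \mu_i > 0}\nu_{F,i}$ is the sum of the line bundles on which $G = \C^\times$ acts with positive, resp.\ negative, weight, and put $d_\pm = \rk\nu_F^\pm$. By Luna's slice theorem a $G$-invariant neighborhood of the orbits in question is $G$-equivariantly the normal bundle $\on{tot}(\nu_F)$ with $G$ acting linearly on the fibers, so the jump in \eqref{kalkcirc} may be computed in this universal model. There $X\qu_- G$ and $X\qu_+ G$ share a common dense open substack $M^\circ$ (the points semistable for both polarizations) and are obtained from $M^\circ$ by adjoining the fibered weighted projectivization of $\nu_F^-$, resp.\ $\nu_F^+$, over $F$; moreover they admit a common smooth proper Deligne--Mumford resolution $\pi_\pm \colon \widehat{X} \to X\qu_\pm G$, with $\pi_\pm$ weighted blow-downs and exceptional divisor $E = \P(\nu_F^-)\times_F \P(\nu_F^+)$. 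This is Thaddeus's picture \cite{th:fl} of a $\C^\times$-flip.

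Then I would carry out the computation. Since $\pi_\pm$ are birational morphisms of smooth proper DM stacks, $\int_{X\qu_\pm G}\kappa_\pm(\alpha) = \int_{\widehat X}\pi_\pm^*\kappa_\pm(\alpha)$, and hence the jump equals $\int_{\widehat X}\big(\pi_+^*\kappa_+(\alpha) - \pi_-^*\kappa_-(\alpha)\big)$. On $\widehat X \setminus E$ the two pulled-back classes agree, because over $M^\circ$ both $\kappa_\pm(\alpha)$ are the image of the restriction of $\alpha$ to $M^\circ$; so the difference is pushed forward from $E$, say $(j_E)_*\gamma$ with $j_E\colon E \hookrightarrow \widehat X$ and $\gamma$ a class on $E$ read off from the universal model, and $\int_{\widehat X}(j_E)_*\gamma = \int_{E}\gamma = \int_{[F]}p_*\gamma$, where $p\colon E \to F$ is the weighted projective bundle with fibers $\P(\{\mu_i<0\})\times\P(\{\mu_i>0\})$. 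Finally the iterated fibre integral of $\gamma$ is the partial-fractions identity that converts $\iota_F^*\alpha \cup \prod_i \big(c_1(\nu_{F,i}) + \mu_i\xi\big)^{-1}$ into its coefficient of $\xi^{-1}$; that is, $p_*\gamma = \Res_\xi\big(\iota_F^*\alpha \cup \Eul_G(\nu_F)^{-1}\big)$, so the jump is $\int_{[F]}\Res_\xi(\iota_F^*\alpha \cup \Eul_G(\nu_F)^{-1}) = \Res_\xi\tau_{X,G,t_0}^G(\alpha)$. Summing over walls gives \eqref{kalkcirc}.

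The hard part will be this last, local step: pinning down the conventions so that the fibre integral reproduces $\Res_\xi$ with the correct sign --- which weighted projectivization is attached to $X\qu_+ G$ versus $X\qu_- G$, the signs of the weights $\mu_i$, and the orientations of the two quotients and of $E$ --- and doing the bookkeeping in the stacky category, tracking the rational multiplicities from finite stabilizers along $F$ and from the weighted blow-up. This is also where the stable${}={}$semistable hypothesis enters: it guarantees that $X\qu_\pm G$ and $\widehat X$ are smooth proper DM stacks, so that the blow-down identity, Stokes' theorem, and the projection formula are all legitimate. An alternative, closer to Kalkman's original argument \cite{ka:co}, replaces the common blow-up $\widehat X$ by the quotient of $\Phi^{-1}([s_0,s_N])$ with a small equivariant tube around $F$ removed and applies Stokes' theorem, reducing the jump to an integral over the link $S(\nu_F)/S^1$ which is again evaluated by the same residue identity; either way that residue identity in the local model is the crux.
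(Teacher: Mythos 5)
Your argument is correct in outline, but it takes a genuinely different route from the paper's. You reduce to a single wall by telescoping, pass to the Luna local model, and compute the jump by realizing $X\qu_-G$ and $X\qu_+G$ as weighted blow-downs of a common resolution $\widehat{X}$ with exceptional divisor $E = \P(\nu_F^-)\times_F\P(\nu_F^+)$, finishing with a fibre-integral/partial-fractions identity over $E$; this is essentially Thaddeus's flip picture, and your alternative via $\Phi^{-1}([s_0,s_N])$ modulo the circle with a tube around $F$ removed is Kalkman's original shell-and-Stokes argument. The paper instead uses the master space of Lemma \ref{master}: it sets $\widetilde{X} = \P(L_-\oplus L_+)\qu G$, which is a smooth proper Deligne--Mumford stack carrying a residual $\C^\times$-action whose fixed locus is $X\qu_-G \sqcup X\qu_+G \sqcup \bigsqcup_t X^{G,t}$, observes that $\int_{\widetilde X}\widetilde\alpha = 0$ on degree grounds when $\deg\alpha = \dim(\widetilde X) - 2$, applies the localization formula for this action, and then takes $\Res_\xi$. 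All walls and all fixed components are handled at once in a single global identity; there is no telescoping, no local model, no common blow-up, and the sign and orientation bookkeeping that you flag as ``the crux'' is absorbed into the standard localization formula (the normal bundles of $X\qu_\pm G$ inside $\widetilde X$ have weights $\mp 1$, from which the signs on the left-hand side fall out immediately). What your approach buys is a concrete geometric picture of the flip and a direct interpretation of the wall term as an integral over the exceptional divisor; what the paper's approach buys is brevity and, more importantly, a structure that survives quantization --- the entire quantum argument of Section 3 replays the same ``build a master space, localize, take residues'' scheme on a master space for gauged maps, whereas controlling the quantum analogue of the blow-up/fiber-integral computation over $E$ across a flip would be substantially harder. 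One small point to watch in your version: the identification of the jump with an integral over $E$ presupposes that the two Kirwan maps agree on the complement of $E$ and that the discrepancy class $\gamma$ is genuinely supported on $E$; this is true but requires the explicit description of $\kappa_\pm$ on the overlap $M^\circ$, and the weighted projectivization makes the fibre integral a sum over orbifold sectors rather than a single residue, so the ``partial-fractions identity'' needs to be stated in the stacky category to produce the rational wall-crossing contributions one sees in Example \ref{classexamples}(c).
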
 

In other words, failure of the following square to commute is measured
by an explicit sum of wall-crossing terms:
%
%
\begin{equation*}
\begin{tikzcd}[every arrow/.append style={-latex}]
  & H_G(X)\arrow{dr}{\kappa_{X,-}^G}  
  \arrow{dl}[swap]{\kappa_{X,+}^G} &  \\
  H(X \qu_+ G) \arrow{dr}[swap]{\tau_{X \qu_+ G}} & &
H(X \qu_- G) \arrow{dl}{\tau_{X \qu_- G}} \\ & \Q &
\end{tikzcd}
\end{equation*}

The formula \eqref{kalkcirc} also holds for certain quasi-projective
varieties, such as vector spaces whose weights are contained in an
open half-space, see the more general Theorem \ref{kalk2} below. 

\begin{example} {\rm (Integration over projective space)}
\label{projspace} 
The following simple example illustrates the notation involved. 
 let 
$G = \C^\times$ acting on $X = \C^k$ by scalar multiplication, 
\[ g [z_1,\ldots, z_n] = [gz_1,\ldots, gz_n] \]
so that $H_G(X) = \Q[\xi]$ where $\xi$ is the equivariant parameter
representing the hyperplane class under the isomorphism
$ H_G(X) \cong H(BG) \cong H(\C P^\infty) .$ Suppose that polarizations
$\LL_\pm$ correspond to the characters $\pm 1$. Invariant sections are
then monomials of positive resp.  negative degree, hence the
semistable locus is $X^{\ss,+}$ for $\LL_+$ and the emptyset for
$X^{\ss,-}$. Thus
\[X \qu_- G = \emptyset, \quad X 
\qu_+ G = \P^{k-1} \]
and the two chambers are separated by the value $t = 0$ so that
$0 \in X$ is semistable for $\LL_-^{(1-t)/2} \otimes \LL_+^{(1+t)/2}$.
The Kirwan map $\kappa_X^G: H_G(X) \to H(X \qu_+ G)$ sends the
generator $\xi \in H^2_G(X)$ to the hyperplane class
$\omega \in H^2(X \qu_+ G)$.  We compute the integrals
$ \int_{\P^{k-1}} \omega^a$ for $a \in \Z_{\ge 0}$ via wall-crossing.
In the negative chamber, the integral is zero, since the quotient is
empty.  By the Kalkman formula \eqref{kalkcirc}
\begin{eqnarray*} 
\int_{\P^{k-1}} \omega^a &=& 
 \Res_\xi \int_{[0]}
  \xi^a \cup \Eul_G(\C^k)^{-1} \\ 
&=& \Res_\xi  \xi^a / \xi^{k}  
=  \begin{cases}  1 & a = k- 1 \\
                      0  & \text{otherwise} \end{cases} 
\end{eqnarray*}   
confirming that $\omega^{k-1}$ is the dual of the fundamental class.
This ends the example.
\end{example} 

\subsection{Quantum Kirwan map and adiabatic limit theorem} 

The main result of this paper is a generalization of Theorem
\ref{kalk1} to the setting of genus-zero Gromov-Witten theory, that
is, quantum cohomology.  The Kirwan map, trace, and fixed point
contributions become {\em quantized} in the sense that each is
replaced by a formal map depending on a formal variable $q$ whose
specialization to $q = 0$ gives the classical version above.  First we
recall the definition of quantum cohomology of a smooth polarized
projective $G$-variety $X$. The {\em equivariant Novikov field}
\[ \Lambda^G_X \subset \Map(H^G_2(X,\Q),\Q)  \] 
\label{c1} associated to a $G $-variety $X$ with polarization
$\LL \to X$ consists of linear combinations of delta-functions $q^d$
at $d \in H_2^G(X,\Q)$ satisfying a finiteness condition:
\[ \Lambda^G_X := \left\{ \sum_{d \in H^G_2(X,\Q)} c_d q^d, \quad
\forall e > 0, \# \{ c_d | \lan d,c_1^G(\LL) \ran < e \} < \infty
\right\} .\]
The {\em equivariant quantum cohomology} is the tensor product
\[ QH_G(X) := H_G(X) \otimes
 \Lambda^G_X. \] 
We write $\Lambda^G_{X,\LL}$ resp. $QH_G(X,\LL)$ if we wish
to emphasize the dependence on $\LL$.
\noindent
A more standard definition in algebraic geometry would use the cone of
effective curve classes, but the definition we give here has better
invariance properties, for example, under Hamiltonian perturbation.
Below we will need several variations on this definition.  Let
$\Lambda_X^{G,\fin} \subset \Lambda_X^G$ denote the space of finite
linear combinations of the symbols $q^d$.  Denote by
$QH_{G,\fin}(X) := H_G(X) \otimes \Lambda_X^{G,\fin}\subset QH_G(X)$
the subspace with only finitely many non-vanishing exponents of $q$
non-vanishing.  Let $\Lambda^{G,\ge 0}_{X} \subset \Lambda_X^G$ be the
space consisting of expressions involving only powers $q^d$ with
positive pairing $ \lan d,c_1^G(\LL) \ran \ge 0$.  Denote by
$QH_{G,\ge 0}(X) \subset QH_G(X)$ the subspace
$H_G(X) \otimes \Lambda^{G,\ge 0}_X$.

The quantum cohomology $QH_G(X)$ has the structure of a Frobenius
manifold, in particular, it is equipped with a family of products
\[ \star_\alpha:  T_\alpha QH_G(X)^2 \to T_\alpha QH_G(X), \quad \alpha \in QH_G(X) \]
defined by equivariant virtual enumeration of genus zero {\em stable
  maps}, that is, maps $u: C \to X$ from projective nodal curves $C$
of arithmetic genus zero to $X$ with no infinitesimal automorphisms.
The moduli stack $\ol{\M}_{g,n}(X,d)$ of stable $n$-marked maps of
homology class $d \in H_2(X,\Z)$ and arithmetic genus $g$ is a proper
Deligne-Mumford stack equipped with a perfect relative obstruction
theory and evaluation map
\[ \ev = (\ev_1,\ldots, \ev_n): \ol{\M}_{g,n}(X,d) \to X^n .\]
The action of $G$ on $X$ induces an action of $G$ on
$\ol{\M}_{g,n}(X,d)$ by translation; then the evaluation maps induce a map 
\[ \ev^* : H_G(X)^n \to H_G( \ol{\M}_{g,n}(X,d)) .\]
The quantum product at $\alpha \in QH_G(X)$ is defined by restricting
to $g = 0$ and defining for $\beta,\gamma \in H_G(X) \subset T_\alpha
QH_G(X)$
\[ \beta \star_\alpha \gamma := \sum_{n \ge 0} \sum_{d \in H_2(X,\Z)} \frac{q^d}{n!}
\ev_{n+3,*} \ev_1^* \alpha \cup \ldots \cup \ev_n^* \alpha \cup \ev_{n+1}^*
\beta \cup \ev_{n+2}^* \gamma \]
\label{c4}
where the push-forwards are defined using the Behrend-Fantechi virtual
fundamental classes \cite{bf:in}, \cite{be:gw}. The definition is extended to
$T_\alpha QH_G(X) \cong QH_G(X)$ by linearity over $\Lambda_X^G$. Note
that we take the virtual fundamental classes to lie in the equivariant
homology $H_G(\ol{\M}_{g,n}(X,d))$ of the coarse moduli space, rather
than in the Chow ring as in \cite{gr:loc}.  The quantum cohomology
$QH_G(X)$ can also be defined using the smaller ring
$\Lambda_{X}^{G,\ge 0}$ but not $\Lambda_{X}^{G,\fin}$, because of the
infinite sums.

The orbifold quantum cohomology $QH(X \qu G)$ of the quotient
$X \qu G$ is defined by virtual enumeration of stable maps from
orbifold curves, as follows.  For any element $g \in G$ let $Z_g$ be
the centralizer of $g \in G$, $[g] \subset G$ its conjugacy class, and
$\lan g \ran \subset Z_g$ the subgroup generated by $g$.  Denote by
$\ol{I}_{X \qu G}$ the {\em rigidified inertia stack} from
Abramovich-Graber-Vistoli \cite{agv:gw}, given by
\[
 \ol{I}_{X \qu G} = \bigcup_{[g]} X^{g,\ss} / (Z_g/ \lan g \ran).
\]
Denote by
\[ QH(X \qu G) = H(\ol{I}_{X \qu G}) \otimes \Lambda_X^G \]
the quantum cohomology of the git quotient $X \qu G$ defined using the
same Novikov field $\Lambda_X^G$; this larger ring contains
$\Lambda_{X \qu G}$ by virtue of Kirwan's injection $H_2(X \qu G) \to
H_2^G(X)$.  Virtual enumeration of representable morphisms from
orbifold curves satisfying certain conditions to $X \qu G$ defines a
family of products
\[ \star_\alpha: T_\alpha QH(X \qu G)^2 \to T_\alpha QH(X \qu G), \quad
\alpha \in QH(X \qu G) .\]

A quantum version of the Kirwan map
\[ \kappa_X^G : QH_G(X) \to QH (X \qu G) \]
(we keep the same notation as in the classical case) was constructed
by the second author in \cite{qkirwan}. The map $\kappa_X^G$ is a formal, non-linear map
with the property that each linearization
\[ D_\alpha \kappa_X^G: T_\alpha QH_G(X) \to T_{\kappa_X^G(\alpha)}QH ( X
\qu G), \quad \alpha \in QH_G(X) \]
is a $\star$-homomorphism, defined by virtual enumeration of {\em
  affine gauged maps}.  Such a map is by definition a representable
morphism $u: \P(1,r) \to X/G$ from a weighted projective line
$\P(1,r), r > 0$ to the quotient stack $X/G$ mapping the stacky point
at infinity $\P(r) \subset \P(1,r)$ to the semistable locus $X \qu G
\subset X/G$.  (More precisely, the domain is a smooth Deligne-Mumford
stack of dimension one with a single stacky point with automorphism
group of order $r$.)  These are the algebro-geometric analogs of the
{\em vortex bubbles} considered in Gaio-Salamon \cite{ga:gw}.  The
compactified moduli stack $\ol{\M}_{n,1}^G(\C,X,d)$ of affine gauged
maps of homology class $d \in H_2^G(X,\Q)$ is, if stable=semistable
for $X$, a proper smooth Deligne-Mumford stack with a perfect relative
obstruction theory over the complexification of Stasheff's
multiplihedron \cite{qkirwan}. It has evaluation maps
\[ \ev \times \ev_\infty: \ol{\M}_{n,1}^G(\C,X) \to (X/G)^n \times
\ol{I}_{X \qu G},\]
at the markings and the point at infinity.  The quantum Kirwan map is
defined for $\alpha \in H_G(X) \subset QH_G(X)$ and a sequence of
classes $\beta_n \in H(\ol{\M}_{n,1}(\C))$ by
\[ \kappa_X^G(\alpha) := \sum_{n \ge 0} \sum_{ d \in H_2^G(X,\Q)}
\frac{q^d}{n!}  \ev_{\infty,*} \ev^* (\alpha,\ldots, \alpha ) \cup f^*
\beta_n.\]
We denote by $\kappa_{X}^{G,n}$ its $n$-th Taylor coefficient in
$\alpha$. The map $\kappa_X^G$ is defined over the smaller
equivariant Novikov ring $\Lambda_{X}^{G,\ge 0}$, but one obtains good
surjectivity properties only using the Novikov field $\Lambda_X^G$,
see \cite{gw:surject}.  The map $\kappa_X^G$ is a quantization of
Kirwan's in the sense that $D_0 \kappa_X^G |_{q = 0}$ is the map
studied in \cite{ki:coh}.  It admits a natural $\C^\times$-equivariant
generalization from $QH_G(X)$ to $QH_{\C^\times}(X \qu G)$, induced by
the natural action of $\C^\times$ on $\P(1,r)$.

A quantization of the classical integration map over $X \qu G$ is
given by the {\em graph potential} in Givental \cite{gi:eq}
\[ \tau_{X \qu G}: QH(X \qu G) \to \Lambda_X^G \]
defined by virtual enumeration of genus zero orbifold stable maps to
$C \times (X \qu G)$, for \(C=\P\),
of homology class $(1,d)$ for some $d \in H_2(X \qu G,\Q)$.  Let
\[\ol{\M}_n(C,X \qu G,d) := \ol{\M}_{0,n}(C \times (X
\qu G),(1,d))\] 
denote the stack of such maps of class $d \in H_2(X \qu G,\Q)$, which
we view as an element of $H_2^G(X,\Q)$ via the inclusion of the
semistable locus.  It has evaluation and forgetful maps  
\begin{equation} \label{evf}
\ev: \ol{\M}_n(C,X \qu G,d) \to (X \qu G)^n ,\quad f: \ol{\M}_n(C,X
\qu G ,d) \to 
\ol{\M}_n(C)
 \end{equation} 
 where $\ol{\M}_n(C)$ is the moduli space of stable maps to $C$ of
 class $[C]$.  The graph potential is defined for
 $\alpha \in H(\ol{I}_{X \qu G}) \subset QH(X \qu G)$ and a sequence
 of classes $\beta_n \in H(\ol{\M}_n(C))$ by
 \begin{equation} \label{gpot} \tau_{X \qu G}(\alpha) := \sum_{n \ge
     0} \sum_{d \in H_2(X \qu G,\Q)} \frac{q^d}{n!}
   \int_{[\ol{\M}_n(C, X \qu G,d)]} \ev^* (\alpha, \ldots, \alpha)
   \cup f^* \beta_n.\end{equation}
 Again $\tau_{X \qu G}$ is defined over the equivariant Novikov ring
 $\Lambda_X^{G,\ge 0}$.  The potential $\tau_{X \qu G}$ admits a
 natural $\C^\times$-equivariant extension
 $QH_{\C^\times}(X \qu G) \to \Lambda_X^G$ induced by identifying
 $QH_{\C^\times}(X \qu G) \cong QH(X \qu G)[\xi]$ where $\xi$ is the
 equivariant parameter. \label{c5}

The graph potential $\tau_{X \qu G}$ is related via the quantum Kirwan
map to a {\em gauged Gromov-Witten potential}
$ \tau_X^G: QH_G(X) \to \Lambda_X^G $ defined by virtual enumeration
of {\em gauged maps}, by which we mean morphisms from $C$ to the
quotient stack $X/G$, satisfying a Mundet stability condition
\cite{mund:corr} generalizing semistability for vector bundles on
curves:

\begin{definition} {\rm (Mundet semistability)}
  \label{mundetsemistable} A {\em gauged map} from a smooth projective
  curve $C$ to the quotient stack $X/G$ is a morphism $v: C \to X/G$,
  consisting of a pair
\[( p: P \to C, u : C \to P(X) := P \times_G X) \] 
see \cite{dejong:stacks}. We suppose that the Lie algebra $\g$ of
\(G\)
is the complexification of a unitary form $\g_\R$.  Let $\g_\R$ be
equipped with an inner product invariant under the action of
$G_\R = \exp(\g_\R)$, inducing an identification
$\g_\R \to \g_\R^\dual$.
\begin{enumerate}
\item {\rm (Projections on Levi subgroups)} Let $R \subset G$ be a
  parabolic subgroup.  A {\em Levi subgroup} is a maximal reductive
  subgroup $L \subseteq R $.  The quotient $U = R/L$ admits an
  embedding in $G$ as a unipotent subgroup.  Denote the corresponding
  Lie algebras $\lie{l} ,\lie{u} \subseteq \lie{r}$.  The group $R$
  admits a decomposition $R = LU$ and the projection
\begin{equation} \label{varphiL} \varphi_L: R \to L \end{equation} 
is a group homomorphism which may also be defined as follows: Let
$\lambda \in \lie{r}$ be an element acting positively on the roots of
$\lie{r}/\lie{l}$.  Then for $z \in \C^\times$ the automorphism
\[ \lie{r} \to \lie{r}, \quad r \mapsto \Ad(z^\lambda)  r  \] 
acts on the $\alpha$-weight space $\lie{u}_\alpha$ by the scalar
$z^{ \lan \lambda,\alpha \ran}$.  The corresponding Lie group
automorphism
\[ R \to R, \quad r \mapsto  z^\lambda r z^{-\lambda} \] 
has a limit as $z \to 0$ which is the projection $R \to L$.  For
example, if $R$ is the group of upper triangular matrices in
$G = GL(r)$ then $U$ is the unipotent group of upper triangular
matrices with $1$'s along the diagonal and $Ad(z^\lambda)$ conjugation
by $z^\lambda$ acts on the $ij$-th entry in the matrix by the scalar
$z^{\lambda_i - \lambda_j}$; the latter tends to zero for $i < j$ if
$\lambda_i < \lambda_j$. \label{c6}
\item \label{bundle} {\rm (Associated graded bundle)} 
Given a reduction of $P$ to a
  parabolic subgroup $R \subset G$ given by a section
  $\sigma: C \to P/R$ and a element $\lambda$ in the Lie algebra
  $\lie{r}$ of $R$ acting positively on $\lie{u}$ and commuting with
  $\lie{l}$, there is an {\em associated graded} morphism given by a
  bundle
\[\Gr(P) \to C\] 
whose structure group is the Levi $L$ and whose transition maps are
obtained by composing the transition maps for $P$ with the projection
$\varphi_L$ of \eqref{varphiL}.    Thus in particular $\Gr(P)$ is the
central fiber in a family of bundles 
\begin{equation} \label{tiP} 
\ti{P} \to C \times \C \end{equation} 
whose fiber over $z$ is the bundle $P_z$ obtained by conjugating the
transition maps of $P$ with $z^{\lambda}$.  The bundle $\Gr(P)$ has a
natural automorphism $\C^\times \to \Gr(P)$ generated by $z^\lambda$,
since the structure group of $\Gr(P)$ reduces to $L$ and $\lambda$
commutes with $L$.
\item {\rm (Associated graded morphism)}
Consider the associated bundle associated to the family of bundles
\eqref{tiP} 
\[ \ti{P}(X) := ( \ti{P} \times X)/G \]
A section $\ti{u}$ of $\ti{P}(X)$ is given by a collection of maps
$\ti{u}_i: U_i \to X$ in local trivializations of $\ti{P}(X)$,
satisfying $\ti{u}_j = \tau_{ji} \ti{u}_i$ where $\tau_{ji}$ are the
transition maps of the bundle. Therefore a section of $\ti{P}(X)$ over
$C \times \C^\times$ is given by
$(\zeta,z) \mapsto z^\lambda u_i(\zeta)$, where $u_i: U_i \to X$ are
the local maps defining $u$.  By Gromov compactness (the bundle
$\ti{P}(X)$ is quasiprojective) the section $\ti{u}$ extends uniquely
over the central fiber $\Gr(P) \to C$ as a stable map
\[\Gr(u): \hat{C} \to (\Gr(P))(X)\]
with domain $\hat{C}$.  Here $\hat{C}$ is a projective nodal curve
with some components possibly mapping into the fibers of $(\Gr(P))(X)$
and a distinguished {\em principal component} $C_0 \subset \hat{C}$
mapping isomorphically onto $C$ via composition with the projection
$(\Gr(P))(X) \to C$. 
\item {\rm (Hilbert-Mumford weight)} Since $\Gr(u)$ is a limit of the
  section $u$ under the automorphism $z^\lambda$ of $\ti{P}(X)$, the
  section $\Gr(u)$ is automatically fixed (up to automorphism) by
  $z^\lambda$ for $z \in \C^\times$.  It follows there exists  family of
  automorphisms 
\[ \phi(z): \hat{C} \to \hat{C}, z \in \C^\times \] 
such that
\[ \Gr(u) ( \phi(z)) = z^\lambda \Gr(u), \quad \forall z \in \C^\times
.\]
Since $C_0$ maps isomorphically onto $C$, the map $\phi(z)$ must be
trivial on $C_0$ and so $\Gr(u) | C_0$ the associated graded section
$\Gr(u)$ takes values in the fixed point set \(P(X^\lambda)\)
where $X^\lambda$ denotes the fixed point set of the automorphism
$z^\lambda: X \to X$.  The {\em Hilbert-Mumford weight}
  \begin{equation} \label{hweight} \mu_H(\sigma,\lambda) \in
    \Z \end{equation}
  (with notation defined in \eqref{bundle}) determined by the
  polarization $\LL$ is the usual Hilbert-Mumford weight, that is, the
  weight of the $\C^\times$-action
  \[ P(\LL) \to P(\LL), \quad l \mapsto z^\lambda l \]
over any point $u(z) \in P(X^\lambda)$ in the image of $C_0$ under
$\Gr(u)$. \label{c7}
\item {\rm (Ramanathan weight)} Assume that
  $\lambda \in \lie{r} \cong \lie{r}^\dual$ is a weight of $R$.  The
  Ramanathan weight of $\mu_{R}(\sigma,\lambda)$ of $(P,u)$ with
  respect to $(\sigma,\lambda)$ is given by the first Chern number of
  the line bundle determined by $\lambda$ via the associated bundle
  construction: If $\sigma^*P$ denotes the principal $R$-bundle
  associated to $\sigma$ as in \cite{ra:th} and $\C_\lambda$ is the
  one-dimensional representation of $R$ with weight $\lambda$ then the
  line bundle is 
\[ \sigma^*P \times_R \C_\lambda \to C .\] 
The {\em Ramanathan weight} is
 \begin{equation} \label{rweight} \mu_R(\sigma,\lambda) = \int_{[C]} c_1( P \times_R \C_\lambda)
  \in \Z.\end{equation} 
\item {\rm (Mundet weight)} The {\em Mundet weight} is the sum of the Hilbert-Mumford
and Ramanathan weights:
\[ \mu_M(\sigma,\lambda) : = \mu_H(\sigma,\lambda) +
\mu_R(\sigma,\lambda). \]
\item {\rm (Mundet semistability)} 
The morphism $(P,u)$ is {\em Mundet semistable} if 
\begin{equation} \label{mss} \mu_M(\sigma,\lambda) \leq 0 \end{equation}
for all such pairs $(\sigma,\lambda)$ \cite{mund:corr}. The morphism
is {\em Mundet stable} if the above inequalities \eqref{mss} are satisfied
strictly.
\end{enumerate} 
\end{definition} 

\begin{remark} We explain how this notation compares with that in
  Schmitt \cite{schmitt:git}.  Any one-parameter subgroup
  $\lambda : \C^\times \to G$ determines a parabolic subgroup 
\[  Q_\lambda := \Set{ g \in G | \exists l \in G, \lim_{z \to 0}
\lambda(z) g \lambda(z)^{-1}   = l  .} \] 
Let $\lambda$ be such a subgroup and 
\[ \beta : C \to P/Q_\lambda \] 
a parabolic reduction $\beta$ of $P$ to the parabolic subgroup
$Q_\lambda$ determined by $\lambda$, and Schmitt \cite{schmitt:git}
denotes by $(E_\bullet(\beta), \alpha_\bullet(\beta) )$ the weighted
filtration of $E = P(V^\dual)$ determined by $\beta$ of length say
$s$, with $\alpha_\bullet$ the coefficients of the coweight generating
$\lambda$.  Schmitt \cite{schmitt:git} defines
  \[ M(E_\bullet(\beta), \alpha_\bullet(\beta)) = \sum_{i=1}^s
  \alpha_i ( \deg(E) \rk(E_i) - \deg(E_i) \rk (E)) \in \Q.\]
which agrees with minus Ramanathan weight of \eqref{rweight} by a
standard computation involving Chern classes, and 
\[ \mu(E_\bullet,
\alpha_\bullet, \varphi) \in \Q \] 
the Hilbert-Mumford weight \eqref{hweight}, see
\cite[p. 139]{schmitt:git} (opposite to our convention).  The Mundet
weight for stability parameter $\delta \in \Q$ is then minus
\[  M(E_\bullet, \alpha_\bullet) + \delta \mu(E_\bullet,
\alpha_\bullet, \varphi)  \in \Q\] 
and a bundle with map is semistable iff this quantity is non-negative
for all pairs $(\lambda,\beta)$.  This ends the Remark.  \end{remark}

To obtain a proper moduli stack with a perfect obstruction theory, we
allow bubbling in the fibers.

\begin{definition} 
  A {\em nodal $n$-marked gauged map} over a scheme $S$ of homology
  class $d$ consists of an $n$-marked prestable curve
  $(\hat{C}, \ul{z})$ over $S$ and a morphism
\[ u: \hat{C} \to C \times X/G, \quad u_*[\hat{C}] = (1,d) \in
H_2(C,\Z) \times H_2^G(X,\Z) \]  
By the condition on the homology class, over each point $s\in S$ there
exists a principal component $C_0 \subset \hat{C}_s$ which maps under
$u$ and projection to the first factor isomorphically to $C$, and a
collection of {\em bubble components} 
\[ C_1,\ldots,C_k \subset\hat{C}, \quad \dim(u(C_i)) = 0, \ i =
1,\ldots k \] 
that map to points $p_i= \pi_1(u(C_i))$ in $C$, $\pi_1$ being the
projection $C \times X/G \to C$.  A marked gauged map
$(\hat{C},u,\ul{z})$ over a point is {\em Mundet semistable} if the
following two conditions hold: \label{c8}
\begin{itemize} \item the restriction $u |_{C_0} : C_0 \to X/G$ is
  Mundet semistable, and 
\item each bubble component $C_i, i =1,\ldots, k$
  on which $u$ is given by a trivial $G$-bundle with constant section
  has at least three special (marked or nodal) points.
\end{itemize} \end{definition}

We introduce the following notation.  Let $\ol{\MM}_n^G(C,X,d)$ denote
the stack of $n$-marked gauged maps and $\ol{\M}_n^G(C,X,\LL,d)$ (or
$\ol{\M}_n^G(C,X,d)$ for short if the polarization is understood) the
substack consisting of Mundet semistable gauged maps for the
polarization $\LL$.  Taking $X$ and $G$ to be points, and $d$ to be
trivial, one obtains the moduli stacks
\[ \ol{\MM}_n(C) := \ol{\MM}_{\on{genus}(C),n}(C,[C]), \quad
\ol{\M}_n(C) := \ol{\M}_{\on{genus}(C),n}(C,[C])\]
of prestable resp. stable maps to $C$ of class $[C] \in H_2(C,\Z)$.
The category of Mundet-semistable gauged maps from $C$ to $X/G$ of
homology class $d$ and $n$ markings forms an Artin stack
$\ol{\M}_n^G(C,X,\LL,d)$, which if all automorphism groups are finite
is a proper Deligne-Mumford stack with evaluation map and forgetful
morphisms \cite{cross}, \cite[Theorem 1.1]{reduc}

\begin{example}   \label{toric} {\rm (Toric Case)}  Suppose that $G$ is a torus
acting on a vector space $X$ with weight decomposition
$\bigoplus_{i=1}^k X_i$ so that $G$ acts on the one-dimensional
representation $X_i$ with weights $\mu_i \in \g^\dual$.  Suppose that
$X$ is equipped with a polarization given by a trivial line bundle
with character $\chi \in \g^\dual$.  Let $C$ be a curve of genus $0$.
Then
\begin{eqnarray*}
\ol{\M}_0^G(C,X,d) &=& {\M}_0^G(C,X,d) = H(\mO_C(d) \times_G X) \qu G
\\ &\cong& \bigoplus_{i=1}^k X_i^{\oplus (\lan \mu_i, d \ran +
1) } \qu G \end{eqnarray*}
where the polarization on $H^0(\mO_C(d) \times_G X)$ is the trivial
line bundle with character $\chi$, see \cite{qkirwan}.  For example,
if $G = \C^\times$ acts on $X = \C^k$ by scalar multiplication then
$\ol{\M}_0^G(C,X,d) = \P^{(d+1)k - 1}$ for $ d \ge 0$ and positive
character $\chi$.
\end{example} 

\begin{remark}\label{obstheoryrem} 
  In \cite[Example 6.4(e)]{qkirwan} a relative obstruction theory for
  $\ol{\M}_n^G(C,X,d)$ is constructed by the method in Behrend
  \cite{be:gw}, with a small extension to the case of quotient stacks
  in Olsson \cite[Theorem 1.5]{ol:def}.  \label{c9} The complex in the
  relative obstruction theory is $(Rp_* e^* T_{X/G})^\dual$, and comes
  equipped with a morphism to the relative cotangent complex $L_\pi$.
  A morphism to the shifted cotangent complex $L_{\ol{\MM}_n(C)}[1]$
is  obtained by composing with $L_\pi\to L_{\ol{\MM}_n(C)}[1]$.
  Completing the diagram in the derived category gives rise to an
  absolute deformation theory as in \cite[Appendix A]{gr:loc}.  If all
  automorphism groups are finite then this obstruction theory is
  perfect.
\end{remark} 

The moduli stack admits evaluation and forgetful maps
\[\ev: \ol{\M}_n^G(C,X,d) \to (X/G)^n ,\quad f: \ol{\M}_n^G(C,X,d) \to 
\ol{\M}_n(C).\]

\begin{definition}  {\rm (Gauged Gromov-Witten potential)} 
  Suppose that all automorphism groups are finite for
  Mundet-semistable gauged maps.  The gauged potential $\tau_X^G$ for
  a smooth projective curve $C$ is the formal map defined
  by \label{c10} for $\alpha \in H_G(X)$ and a sequence of classes
  $\beta_n \in H(\ol{\M}_n(C))$
  \begin{multline}
    \tau_X^G   ( \alpha,\beta)  = 
\sum_{n \ge 0, d \in H_2^G(X,\Z)} \frac{q^d}{n!}
    \int_{ [\ol{\M}_n^G(C,X,d)]} \ev^* (\alpha,\ldots, \alpha ) \cup 
    f^* \beta_n
\end{multline}
extended to $QH_G(X)$ by linearity. 
\end{definition} 

The gauged potential and the graph potential of the quotient are
related by the {\em adiabatic limit theorem} of \cite{qkirwan} (which
is a generalization of an earlier result of Gaio-Salamon
\cite{ga:gw}): Let $\rho$ be a rational {\em stability parameter}; we
consider Mundet stability with respect to the polarization $\LL^\rho$
with $\rho \to \infty$.  

\begin{definition} Let $\ol{\M}_{n,1}(C)$ be the moduli stack of
  scaled $n$-marked maps from \cite{qkirwan}; a generic element is an
  $n$-marked map $\pi: \hat{C} \to C$ with a relative differential
  $\lambda \in H^0(\hat{C},\omega_\pi)$.  It contains a prime divisor
  $ \ol{\M}_n(C)$ corresponding to maps with zero differential
  $\lambda = 0$, and for any partition
  $I_1 \cup \ldots \cup I_r = \{ 1 ,\ldots, n \}$ a prime divisor
  isomorphic to $\ol{\M_r} \times \prod_{j=1}^r \ol{\M}_{|I_j|}(\C)) $
  whose generic element is a curve $\hat{C}$ with infinite
  differential $\lambda = \infty$ on the one unmarked component
  $C_0 \cong C$ and finite differentials on the remaining components
  $C_1,\ldots, C_r$.
\end{definition}

\begin{theorem} [Adiabatic limit theorem] \label{largerel} If
  $X \qu G$ is a locally free quotient then all automorphism groups
  are finite for $\rho$ sufficiently large (more precisely, for any
  class $d \in H_2^G(X,\Z)$ there exists an $r > 0$ such that
  $\rho > r$ implies that all automorphism groups are finite) and
  \[ {\tau}_{X \qu G} \circ {\kappa_X^G}  = \lim_{\rho \to \infty}
  {\tau}_X^G  \]
in the following sense of Taylor coefficients:  
For any class 
$\beta \in \ol{\M}_{n,1}(C)$ let 
\[ \sum_{k=1}^l \prod_{I_1 \cup \ldots \cup I_r = \{ 1 ,\ldots, n \} }
\beta^k_{\infty} \otimes \beta^k_1 \otimes \ldots \beta^k_r , \quad
\beta_0 \]
be its restrictions to 
\[ H \left( \ol{ \M}_r(C) \times \prod_{j=1}^r \ol{\M}_{|I_j|}(\C) \right) , \quad \text{resp.}
\ H( \ol{\M}_n(C) )\]
respectively.   Then 
\[ \sum_{k=1}^l \tau_{ X \qu G}^r (\alpha,\beta_\infty^k) \circ
\kappa_X^{G,|I_j|}( \alpha, \beta_j^k) = \lim_{\rho \to \infty} \tau_X^{G,n}( \alpha, \beta_0) . \]
\end{theorem}

\noindent In other words, the diagram
\begin{equation} 
  \label{classdiag}
\begin{tikzcd}[every arrow/.append style={-latex}]
  QH_G(X) \arrow{dr}[swap]{\tau_X^G}
  \arrow{rr}{{\kappa}_X^G} & &QH_{\C^\times}( X \qu G)
  \arrow{dl}{\tau_{X \qu G}} \\ 
  & \Lambda_X^G &
   \end{tikzcd}
\end{equation}
commutes in the limit $\rho \to \infty$.  

\begin{remark} We often take in examples as insertion the class
  $\beta \in H^6(\ol{\M}_{3,1}(C))$ pulled back from the point class
  $\beta_0 \in H^6(\ol{\M}_3(C))$ under the forgetful map
  $\phi: \ol{\M}_{3,1}(C) \to \ol{\M}(C)$, in which case the class
  $\beta_{3,\infty}$ is also the point class which restricts to the
  point class $\beta_0 \in H^6(\ol{\M}_3(C))$ in which case the
  classes $\beta_\infty$ is also the point class and the classes
  $\beta_j$ trivial, since $\phi^{-1}(\pt)$ contains a single
  curve with infinite scaling, consisting of one infinitely-scaled
  components and three finitely-scaled components each with a single
  marking.  For $C = \P$ the projective line the result is a comparison between the
  three-point Gromov-Witten invariants in $X \qu G$ and the
  three-point gauged Gromov-Witten invariants in $X$.
See Example \ref{threepoint} below. 
\end{remark}

\subsection{Quantum Kalkman formula}

In order to study the dependence of the Gromov-Witten graph potential
of the quotient on the choice of polarization, suppose that $\LL_\pm \to
X$ are two polarizations of $X$ and 
\[
\LL_t := \LL_-^{(1-t)/2}\otimes
\LL_+^{(1+t)/2}
\] 
is the family of rational polarizations given by interpolation.  Let
$X \qu_\pm G$ denote the git quotients, 
\[ \kappa_{X,\pm}^G :
QH_G(X,\LL_\pm) \to QH_{\C^\times}(X \qu _\pm G) \] 
the {\em quantum} Kirwan maps (note that we do not introduce new
notation for the quantum version; the classical Kirwan map is obtained
by setting $q = 0$) for the two polarizations and 
\[ \tau_{X \qu_\pm
  G}: QH_{\C^\times}(X \qu_\pm G) \to \Lambda_{X,\LL_\pm}^G \] 
the graph potentials.  Denote by 
\[QH_G^{\on{fin}}(X) \subset
QH_G(X,\LL_-) \cap QH_G(X,\LL_+)\]
the subset of the quantum cohomology of finite sums in the Novikov
variable. The main result of this note is a formula for the difference
\[\tau_{X \qu_+ G} \circ \kappa_{X,+}^G - \tau_{X \qu_- G}
\circ \kappa_{X,-}^G: QH_G^{\on{fin}}(X) \to \Lambda_X^G \]
as a sum of fixed point contributions given by gauged Gromov-Witten
invariants with smaller structure group. Namely, for any non-zero
$\zeta \in \g$ generating a one-parameter subgroup $\C^\times_\zeta$
denote by $X^\zeta$ the fixed point set of $\C^\times_\zeta$ generated
by $\zeta$ and $G_\zeta$ the centralizer of $\zeta$.  The
adjoint action of $\zeta$ on $\g$ is semisimple and the Lie algebra
$\g_\zeta$ of $G_\zeta$ is then
\[  \g_\zeta = \{ x \in \g \ | \ [x,\zeta] = 0 \} .\]
It follows that $G_\zeta$ is reductive.

\label{c11}   Denote by $X^{\zeta,t} = X^\zeta$ the locus that
is semistable with respect to $\LL_t$. For any $t \in (-1,1)$ such
that $X^{\zeta,t}$ is non-empty, we introduce in Definition \ref{fps}
a stack $\ol{\M}^G_n(C,X,\LL_t,\zeta)$ of {\em reducible $n$-marked
  gauged maps} from $C$ to $X/G_\zeta$ consisting of a principal
component $C_0 \cong C$ mapping to $X^{\zeta,t}/G_\zeta$ and bubbles
$C_1,\ldots, C_k \subset \hat{C}$ mapping to $X/G_\zeta$. This stack
admits a perfect equivariant obstruction theory whose relative part is
the cone on the map $(Rp_*e^* T_{X/G})^\dual \to \C \zeta^\dual$ given
by the infinitesimal action, see Remark \ref{obstheoryrem}
above. 
(The complex $(Rp_* e^* T_{X/G})^\dual$ is not
perfect because of the $\C^\times$-automorphisms; taking the cone has
the effect of cancelling this additional automorphism.) Denote by
$\nu_t$ the virtual normal complex, given as the moving part of the
obstruction theory on $\ol{\M}^G_n(C,X,\LL_t)$ pulled back to
$\ol{\M}^G_n(C,X,\LL_t,\zeta)$ and by
\[ \Eul(\nu_t) \in H(\ol{\M}^G_n(C,X,\LL_t,\zeta))[\xi,\xi^{-1}] \] 
its (invertible) Euler class, where $\xi$ is the equivariant
parameter. The Mundet stability condition in general depends on a
choice of equivariant K\"ahler class; here we are interested in the
{\em large area limit} in which the gauged Gromov-Witten invariants
are related to the Gromov-Witten invariants of the git quotient.
Denote by
\[ \ti{\Lambda}_X^G := \Map(H_2^G(X,\Z),\Q) , \quad 
  \ti{\Lambda}_X^G[\xi,\xi^{-1}] := \Map(H_2^G(X,\Q),
\Q[\xi,\xi^{-1}])
\] 
\label{c2} \label{c122} the space of $\Q$
resp. $\Q[\xi,\xi^{-1}]$-valued functions on
$H_2^G(X,\Z)$. \label{c12} Note that $\ti{\Lambda}_X^G$ has no ring
structure extending that on $\Lambda_X^G$.  The space
$\ti{\Lambda}_X^G$ can be viewed as the space of distributions in the
quantum parameter $q$, and we use it as a master space interpolating
Novikov parameters for the quotients as \(t\) varies.

\begin{definition} [Fixed point potential]
  Let $X,G,\LL_\pm$ be as above, and $\zeta \in \g,t \in (-1,1)$ such
  that $X^{\zeta,t}$ is non-empty. The {\em fixed point potential}
  associated to this data is the map
\begin{multline}
  \tau_{X,G,t,\zeta}: QH_{G,\fin}(X) \to
  \ti{\Lambda}_X^G[\xi,\xi^{-1}]
 \\ \quad \alpha \mapsto \sum_{d \in H_2^G(X,\Z)} \sum_{n \ge 0}
 \int_{[\ol{\M}_n^{G}(C,X,\LL_t,\zeta,d)]} \frac{q^d}{n!}  \ev^* (\alpha, \ldots,
 \alpha) \cup \Eul(\nu_t)^{-1} \cup f^* \beta_n
\end{multline}
for $\alpha \in H_G(X)$, extended to $QH_{G,\fin}(X)$ by linearity.
\end{definition} 

We may now state the main result of the paper, in the case of torus
actions.

\begin{theorem} [Quantum Kalkman formula, abelian case]
  \label{qkalkcirc} Suppose that $G$ is a torus and $X$ is a smooth
  projective $G$-variety equipped with polarizations $\LL_\pm \to X$,
  and all automorphism groups are finite for the polarization
  $\LL_\pm$. Then
  \begin{equation} \label{projfixedeq} \tau_{X \qu_+ G}\circ \
    \kappa_{X,+}^G - \tau_{X \qu_- G}\circ \kappa_{X,-}^G = \sum_{t
      \in (-1,1)} \sum_{\zeta \in \g^\times/G} \Res_\xi
    \tau_{X,G,t,\zeta} .\end{equation}
\end{theorem} 

In other words, failure of the following square to commute is measured
by an explicit sum of wall-crossing terms given by certain gauged
Gromov-Witten invariants:

\[\begin{tikzcd} 
  QH_G(X,\LL_-)  \arrow{d}[swap]{\kappa_{X,-}^G}
& {QH^{\fin}_G(X)}  
\arrow{l} \arrow{r}& 
{QH_G(X,\LL_+)}
 \arrow{d}{\kappa_{X,+}^G} \\ 
 {QH_{\C^\times}(X \qu _- G)} \arrow{d}[swap]{\tau_{X \qu_- G}} & &
  {QH_{\C^\times}(X \qu _+ G)} \arrow{d}{\tau_{X \qu_+ G}} \\
  {\Lambda_{X,\LL_-}^G} \arrow{r} &{\ti{\Lambda}_X^G} &{\Lambda_{X,\LL_+}^G.}
\arrow{l}
\end{tikzcd}\]

The diagram is somewhat more complicated than in the classical case in
\eqref{classdiag}, because the maps
$\tau_{X \qu_\pm G} \ \kappa_{X,\pm}^G$ are defined using different
Novikov rings \label{c13} $\Lambda_{X,\LL_\pm}^{G,\ge 0}$.  If a symbol $q^d$
appears in one Novikov ring $\Lambda_{X,\LL\pm}^{G,\ge 0}$ but not the
other then the corresponding contribution to
$\tau_{X \qu_\pm G} \ \kappa_{X,\pm}^G$ must be equal to the
$q^d$-term in the wall-crossing contribution on the right.  See
Example \ref{threepoint}.

The wall-crossing formula for Gromov-Witten invariants Theorem
\ref{qkalkcirc} should be considered mirror to various results in on
the behavior of the derived category of bounded complexes of coherent
sheaves under variation of git quotient appearing recently in Segal
\cite{seg:equiv}, Halpern-Leistner \cite{hal:der} and
Ballard-Favero-Katzarkov \cite{ball:var}, and more generally for
crepant birational transformations, earlier in \cite{kaw:dk}.  

\begin{remark} The fixed point potential $\tau_{X,G,t}^G$ quantizes
  the fixed point contributions in Kalkman's formula \eqref{kalkcirc},
  in the following sense: Let $C$ be a genus zero curve and consider
  the $n$-th Taylor coefficient $\tau_{X,t,\zeta}^{G,n}$. Consider the
  integral with insertion of the class $\beta_n \in H(\ol{\M}_n(C))$
  given by
\[ \tau_{X,t,\zeta}^{G,n}(\alpha,\beta_n) := \sum_d q^d
\int_{[\ol{\M}_n^G(C,X,\LL_t,\zeta,d)]} \ev_n^* \alpha \cup f^* \beta_n
\cup
  \Eul(\nu_t)^{-1}.\]
If $\beta_1 \in H(\ol{\M}_1(C)) \cong H(C)$ is the point class then
\begin{equation} \label{kclaim} \tau_{X,t,\zeta}^{G,1} (\alpha, \beta_1)
  |_{q = 0} = \int_{[X^{\zeta,t}]} \alpha \cup
  \Eul_G(\nu_{X^{\zeta,t}})^{-1} \end{equation} 
which is the contribution from the fixed point components appearing in
Kalkman's wall-crossing formula. Indeed any map from the genus zero
curve $C$ to $X/G$ of homology class $0$ consists of a trivial bundle
and constant section. It follows that
\[ \ol{\M}_n^G(C,X,\LL_t,\zeta,0) \cong X^{\zeta,t} \times
\ol{\M}_n(C).\]
\label{c14} In particular, for $n = 1$ we obtain
\[ \ol{\M}_1^G(C,X,\LL_t,\zeta,0) \cong X^{\zeta,t} \times C \] 
which implies the claim. Furthermore, the Euler class $\Eul(\nu_t)$ is
the Euler class of the virtual normal complex to $X^{\zeta,t}$.
\end{remark}

\begin{remark}   We have stated the formula \eqref{projfixedeq} in its simplest form;
  there are various extensions which include:
\begin{enumerate} 
\item {\rm (Twistings by Euler classes)} One can introduce {\em
    twisted gauged Gromov-Witten invariants} as follows.  The
  universal curve 
\[  p: \ol{\cC}_n^G(C,X) \to \ol{\M}_n^G(C,X) \] 
admits a universal gauged map 
\[  e: \ol{\cC}_n^G(C,X) \to X/G . \]  
For any complex of $G$-equivariant vector bundles $E \to X$ denote
by
\begin{equation} \label{index} \Ind(E) :=  Rp_* e^* E \end{equation} 
the index of the complex $E/G \to X/G$.  The complex $\Ind(E)$ is an
object in the bounded derived category of $\ol{\M}_n^G(C,X)$.  Indeed,
$p$ is a local complete intersection morphism \cite[Appendix]{co:qrr}
and so $Rp_* e^* E $ admits a resolution by vector bundles.  The Euler
class
\begin{equation} \label{epsE} 
\eps(E) := \Eul_{\C^\times}(\Ind(E)) \in H(\ol{\M}_n^G(C,X))[\xi,\xi^{-1}]
\end{equation}
is well-defined $\xi \in H^2_{\C^\times}(\pt)$ is the equivariant
parameter. For any equivariant bundle $E$ on $X$, inserting the Euler
class of \eqref{epsE} gives twisted gauged Gromov-Witten invariants.
Introducing similar twistings in the definition of $\kappa_X^G$, the
wall-crossing formula extends to this case as well.

\item {\rm (Wall-crossing for individual Gromov-Witten invariants)}
  Although we have written the formula \eqref{projfixedeq} as a
  difference of potentials, after unraveling the definitions one
  obtains wall-crossing formulas for individual Gromov-Witten
  invariants, or at least finite combinations of them.  See Example
  \ref{c1} below.
\item {\rm (Wall-crossing for non-convex actions)} In some cases, the
  action of $G$ on $X$ is not convex at infinity (e.g. $G$ is a torus
  acting on a vector space $X$ with weights
  $\mu_1,\ldots, \mu_k \in \g^\dual$ not contained in any open
  half-space $H \subset \g^\dual$) and the moduli spaces
  $\ol{\M}_n^G(C,X)$ are non-compact. Often, the moduli spaces
  $\ol{\M}_n^G(C,X)$ admit an auxiliary group action with proper fixed
  point loci, and thus the maps $\tau_{X \qu G}, \kappa_X^G$ etc. can
  be defined via localization.  The wall-crossing formula holds in
  this case as well, as long as the auxiliary group action extends to
  the various master spaces involved and the fixed point loci on these
  master spaces is proper (after fixing the homology class of the
  gauged map.)  See Example \ref{simpleflop} below.
\end{enumerate} 
\end{remark}

\begin{example} {\rm (Three-point Gromov-Witten invariants of projective space)}
\label{threepoint} 
The following simple example illustrates the notation involved in
Theorem \ref{qkalkcirc}. As in Example \ref{projspace}, let
$G = \C^\times$ acting on $X = \C^k$ by scalar multiplication, so that
$H_G(X) = \Q[\xi]$ where $\xi$ is the equivariant parameter.  Suppose
that polarizations $\LL_\pm$ correspond to the characters $\pm
1$ and 
\[X \qu_- G = \emptyset, \quad X  
\qu_+ G = \P^{k-1} \]
and the two chambers are separated by the value $t = 0$ so that  
$0 \in X$ is semistable for $\LL_-^{(1-t)/2} \otimes \LL_+^{(1+t)/2}$.  
Let $\omega \in H^2(X \qu_+ G)$ denote the hyperplane class. For
integers $a,b,c \ge 0$ we compute genus $0$, $d=1$, $n=3$ invariants
$ \lan \omega^a, \omega^b, \omega^c \ran_{0,1} $ of $\P^{k-1}$ via
wall-crossing; this was already covered in Cieliebak-Salamon
\cite{ciel:wall}. Of course from the elementary computation of quantum
cohomology of projective space one knows that
\[
  \lan \omega^a, \omega^b, \omega^c \ran_{0,1} = \begin{cases} 1 & a + b + c = 2k- 1 
    \\ 0 & \text{otherwise} .\end{cases}\]

  First we relate the above three-point invariant to a gauged
  invariant. Since $c_1^G(X) = k \xi$, the minimal Chern number of $X$
  is $k$.
  For dimensional reasons, there are no quantum corrections in
  $ D_0 \kappa_X^G(\xi^i), i \leq k - 1$.  Thus 
\[ D_0 \kappa_X^G(\xi^i) = \omega^i, i \leq k-1 .\] 
The adiabatic
  limit Theorem \ref{largerel} with insertions implies that the genus
  zero, three-point invariants
  $\lan \omega^a, \omega^b, \omega^c \ran_{0,d} $ of class
  $d \in H_2(X \qu G) \cong \Z$ in the quotient $X \qu G$ equal gauged
  Gromov-Witten invariants:
\[ \lan \omega^a, \omega^b, \omega^c \ran_{0,d} = \int_{[ \ol{\M}_3^G(\P, X,d)]} \ev_1^* \xi^a \cup \ev_2^* \xi^b \cup 
\ev_3^* \xi^c \cup f^* \beta_3 \]
where $\P$ is the projective line and $\beta_3 \in H^6(\ol{\M}_3(\P))$
is the point class, that is, the class fixing the location of the
three marked points.  

We apply the wall-crossing formula Theorem \ref{qkalkcirc} to compute
the gauged invariant. There are no holomorphic spheres in $X$, so the
moduli stack $\ol{\M}_0^{G}(\P,X,\LL_t,d)^G$ is a point, for \(t=0\),
consisting of the bundle $P$ with first Chern class
$c_1(P) = d \in H^2_G(X,\Z) \cong \Z$ with constant section equal to
zero. Thus the fixed point stack is
\[ \ol{\M}_3^{G}(\P,X,\LL_{t=0},\zeta,d) \cong \ol{\M}_3(\P) .\]
for any non-zero $\zeta \in \g$. 
The index bundle of $TX$ at the fixed point for $d = 1$ is \label{c15}
\[ \Ind(T_{X/G}) |_0 = H^0(\P, \mO_\P(1)^\times \times_{\C^\times}
\C^k) \cong \C^{2k} .\]
It has Euler class
$\Eul(\nu_{t=0}) = \xi^{2k} .$
By the wall-crossing formula \ref{qkalkcirc}
\begin{eqnarray*} 
\lan \omega^a, \omega^b, \omega^c \ran_{0,1} &=& \Res_\xi 
\int_{[\ol{\M}_3^{G}(\P,X,L_0,\zeta,1)]} \frac{ \ev_1^* \xi^a \cup \ev_2^*
\xi^b \cup \ev_3^* \xi^c \cup f^* \beta_3}{\Eul(\nu_{t=0})} \\ &=& \Res_\xi 
\xi^{a + b + c} / \xi^{2k} \\ &=& \begin{cases} 1 & a + b + c = 2k- 1 
  \\ 0 & \text{otherwise} .\end{cases}
\end{eqnarray*}   
Thus 
\[\omega^a \star \omega^b = q \omega^{a + b - k}, \quad k \leq a +
b \leq 2k-1 \] 
as is well-known. For example, taking $a = b = k-1$ we obtain that
there is a unique line in projective space passing through two generic
points and a generic hyperplane.  We give another Fano example in
Example \ref{c1}, where we compute the change in a coefficient in the
fifth quantum power of the first Chern class for a blow-up of the
projective plane.  This ends the example.
\end{example} 

The most interesting case of the wall-crossing formula Theorem
\ref{qkalkcirc} is the {\em crepant case} by which we mean that the
sum of the weights at any fixed point vanishes (Definition
\ref{crepant}); the term {\em crepant} was introduced by Reid
\cite{reid:min} as the opposite of discrepant in the context of the
minimal model program.  In the last section we give a proof of a
version of the crepant transformation conjecture of Li-Ruan
\cite{liruan:surg}, Bryan-Graber \cite{bryan:crep},
Coates-Corti-Iritani-Tseng \cite{coates:computing}, Coates-Ruan
\cite{cr:crep} on equivalence of Gromov-Witten theories in this case:
We say that two elements of $\Map(H_2(X \qu G,\Q), \Z)$ are equal
almost everywhere (a.e.)  in the quantum parameter $q$ if their
difference is an element of the form $\sum_\beta f(\beta) q^\beta$
with $f(\beta + d \delta), d \in \Z$ polynomial in $d$. If, as
distributions, these functions are tempered then by Fourier transform
the difference is supported on a set of measure zero.

\begin{theorem} [Wall-crossing for crepant birational transformations 
of git type]
\label{cytype} 
Suppose that $X,G$ are as in Theorem \ref{qkalkcirc}, and $C$ has genus
zero.  If all the wall-crossings are crepant then
\[ {\tau}_{X \qu_- G} 
\circ  {\kappa}_{X,-}^G  
 \underset{a.e.}{=} {\tau}_{X \qu_+ G}
\circ {\kappa}_{X,+}^G  .\]
\end{theorem}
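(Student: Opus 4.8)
The plan is to derive Theorem~\ref{cytype} from the quantum Kalkman formula Theorem~\ref{qkalkcirc}. That formula already writes $\tau_{X\qu_+G}\circ\kappa_{X,+}^G-\tau_{X\qu_-G}\circ\kappa_{X,-}^G$ as the finite sum $\sum_{t\in(-1,1)}\Res_\xi\tau_{X,G,t}^G$, so it suffices to show that under the crepant hypothesis each summand $\Res_\xi\tau_{X,G,t}^G$ is a distribution in the quantum parameter that vanishes off a measure-zero subset of the torus $\Hom(H_2^G(X,\Z),\C^\times)$; a finite sum of such distributions is again a.e.\ zero. Thus I would fix one wall value $t$ and one $G$-invariant component $X^{G,t}$, and analyze the fixed point potential $\tau_{X,G,t}^G$ one curve class $d$ at a time.

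The key computation is the $\xi$-degree of the twisting class $\eps_+(T(X/G))$ on $\ol{\M}_n^G(C,X,L_t,d)^G$. Since $C$ has genus zero, the universal gauged map together with Grothendieck--Riemann--Roch identifies $\Ind(T(X/G))^+$, restricted to the stratum of curve class $d$, with a virtual $G$-equivariant bundle of rank $\langle c_1^G(\nu_{X^{G,t}}),d\rangle+\codim(X^{G,t})$; its $G$-equivariant Euler class $\eps_+(T(X/G))$ is a product of factors $c_1(\cdot)+\mu_i\xi$ over the normal weights $\mu_i$ divided by similar factors coming from $H^1$, with net $\xi$-degree equal to that virtual rank. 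The crepant hypothesis is exactly the statement that the weight sum $\sum_i\mu_i$, i.e.\ the $G$-equivariant first Chern class of $\nu_{X^{G,t}}$, vanishes; hence the top $\xi$-degree of $\eps_+(T(X/G))$ is the $d$-independent integer $m_t=\codim(X^{G,t})$. (Contrast Example~\ref{threepoint}, where non-crepancy makes the analogous exponent grow like $(d+1)k$, so that a monomial insertion receives a contribution from exactly one value of $d$ and the wall term is a single monomial in $q$.)

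From the previous step, for $\alpha\in QH_{G,\fin}(X)$ the residue $\Res_\xi\tau_{X,G,t}^G(\alpha)=\sum_d c_d(\alpha)\,q^d$ has coefficients that, for $|d|$ large, are given by a single universal integral over the fibres of the forgetful morphism $\ol{\M}_n^G(C,X,L_t,d)^G\to\ol{\M}_n(C)$ whose value no longer depends on $d$; equivalently, $\sum_d c_d(\alpha)q^d$ is the Laurent expansion of a rational function of $q$ whose poles lie on finitely many subtori $\{q^{c}=1\}$. I would then match this against the structure of the two composites $\tau_{X\qu_\pm G}\circ\kappa_{X,\pm}^G$: these are power series supported in the opposite half-lattices $\{\,d:\ \langle d,c_1^G(L_\pm)\rangle\ge 0\,\}$, so by Theorem~\ref{qkalkcirc} they are the expansions at $q=0$ and at $q=\infty$ of one and the same rational function, and their difference is precisely the boundary distribution $\sum_d c_d q^d$ over the full line in $H_2^G(X,\Z)$ cut out by the wall. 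Since $c_d$ is eventually constant along that line, this distribution is a finite combination of push-forwards of the Dirac distribution from proper subtori, hence vanishes off a measure-zero set; summing over the finitely many walls finishes the proof.

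The main obstacle I anticipate is the second step: carrying out the Grothendieck--Riemann--Roch degree count rigorously in the presence of the bubble components $C_i$ mapping to $X/G$ and of the virtual fundamental class on $\ol{\M}_n^G(C,X,L_t,d)^G$, and, relatedly, proving that the total wall contribution is finitely generated over a ring $\Q[q^{\pm c}]$ so that the generating series is genuinely rational rather than merely formal; one must also fix conventions making $\ti{\Lambda}_X^G$ into a space of distributions on $\Hom(H_2^G(X,\Z),\C^\times)$ and pin down the meaning of ``almost everywhere.'' A smaller point is that $\tau_{X,G,t}^G$ is only defined on $QH_{G,\fin}(X)$, so the identity must first be proved there and then extended by $q$-adic continuity.
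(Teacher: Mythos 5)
Your outline starts in the right place (the quantum Kalkman formula) and correctly identifies that the crepancy hypothesis should control the $\xi$-degree of $\eps_+(T(X/G))$ as $d$ varies, but there is a genuine gap in how you pass from ``degree control'' to ``a.e.\ vanishing'', and one of your intermediate claims is false.

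The missing idea is an \emph{action of the Picard stack $\Pic(C)$ on the fixed loci} (Lemma~\ref{pic} in the paper). Twisting the $G_\zeta$-bundle by a line bundle $Q$ of degree $r$ in the $\C^\times_\zeta$-direction gives a canonical isomorphism
$\cS^r: \ol{\M}_n^{G_\zeta}(C,X,L_t,\zeta,d) \to \ol{\M}_n^{G_\zeta}(C,X,L_t,\zeta,d+r\delta)$
preserving virtual classes and evaluation classes, and changing the integrand only by the ratio of moving Euler classes of $\Ind(T(X/G))$ and its $\cS^r$-pullback. This identification is what makes the sum over a line $\{d+r\delta : r\in\Z\}$ tractable: the contribution at $d+r\delta$ is an integral over a \emph{fixed} moduli stack of a form whose dependence on $r$ is explicit via GRR. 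Your proposal instead tries to read off the behaviour in $d$ just from a rank count of $\Ind(T(X/G))^+$; but knowing the virtual rank does not, by itself, control the residue integrals, and without $\cS^r$ you have no map comparing the different-degree moduli spaces over which you integrate.

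Relatedly, your claim that the coefficients $c_d$ ``are eventually constant along that line'' is not correct in general, and the conclusion that the wall term is the Laurent expansion of a rational function with poles on subtori is correspondingly off. What the paper actually shows is that, after pulling back along $\cS^r$, the ratio of Euler classes expands as $\xi^{r\mu}(1 + (\text{polynomial in }r)\xi^{-1}+\cdots)$; the crepant condition forces $\mu=\sum_i\mu_i=0$, so the residue coefficient at degree $d+r\delta$ is a \emph{polynomial} in $r$ (of degree bounded by the dimension of the fixed locus), not eventually constant. The sum $\sum_{r\in\Z}f(r)q^{d+r\delta}$ with $f$ polynomial is then a finite combination of derivatives of the Dirac distribution at $q^\delta=\text{const}$, hence a.e.\ zero---that is the correct form of the final distributional argument, and it is genuinely stronger than ``rational function, hence supported on subtori''. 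Your last paragraph correctly flags GRR in the presence of bubbles and the distributional conventions as delicate points, but the Picard-twist comparison is the structural lemma you actually need and do not have.
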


\begin{remark}  \label{cytyperem}
\begin{enumerate} 
\item 
Theorem \ref{cytype} implies many of the special cases already known
in the literature, although actually computing the transformations
$\kappa_{X,\pm}^G$ relating the graph potentials $\tau_{X \qu_\pm G}$
can be a difficult task.  Note that Iwao-Lee-Lin-Wang \cite{lee:flop},
Lee-Lin-Wang \cite{lee:fmi} extend the invariance to cases not
necessarily related by variation of git, while the results here are
more general than that of \cite{lee:fmi}, \cite{lee:flop} since we
allow ``weighted flops''.  More recently Coates-Iritani-Jian
\cite{coates:crep} have proved a version of the crepant transformation
conjecture for toric complete intersections, which overlaps in many
cases with the results here.  
\item The results here are for the graph potential, whereas the
  results in Coates-Iritani-Jiang \cite{coates:crep} are for the
  fundamental solution.  One natural expects the results here to
  extend to the case of localized graph potentials (fundamental
  solutions) using the results of Halpern-Leistner \cite{hal:der} and
  Ballard-Faver-Katzarkov \cite{ball:var}.  We hope to return to
  this in future work.
\item \label{almosteverywhere} Almost everywhere equality in the
  formal parameter $q$ means the following: considering both sides as
  elements in $\Map(H_2^G(X,\Z)/\on{torsion},\Q)$, \label{c3} the
  difference is a polynomial in at least one direction.  In
  particular, in the case of a single quantum parameter the difference
  is tempered distribution its Fourier transform in that direction has
  support of measure zero, see Section \ref{cy}.
\item The proof of Theorem \ref{cytype} uses an action of the Picard
  stack of the curve on the fixed point stacks, see Lemma \ref{pic}.
  In the crepant case the (almost) invariance under this action
  implies that, after summing over degrees, the wall-crossing term is
  a sum of derivatives of delta-functions in the quantum parameter.
\end{enumerate}
\end{remark}

\begin{example} \label{simpleflop} {\rm (Simple three-fold flop,  
cf. Li-Ruan \cite{liruan:surg}, Iwao-Lee-Lin-Wang \cite{lee:fmi},
Lee-Lin-Wang \cite{lee:flop})} The following simple example may help
  to explain the notion of ``almost everywhere vanishing'' of the
  wall-crossing contributions in the quantum parameter.  Let $G =
  \C^\times$ acting on $X = \C^4$ with weights $\pm 1$ each of
  multiplicity $2$.   Let $\LL$ be the trivial polarization and
  $\LL_t$ the trivial bundle shifted by tensoring with a
  representation of weight $t$. The invariant sections of $\LL_t$ are
  spanned by monomials 
  \[ z_1^{d_1} z_2^{d_2} z_3^{d_3} z_4^{d_4} \in H^0(\LL_t)^G, \quad
  d_1 + d_2 - d_3 - d_4 = -t .\]
  Thus the semistable locus $X^{\ss,t}$ is either $(z_1,z_2) \neq 0$
  or $(z_3,z_4) \neq 0$ depending on the sign of $t$, and the git
  quotients $X \qu_\pm G$ factor over
  $\P = (\C^2 - \{ 0 \})/\C^\times$ giving identifications
\[X \qu_\pm G = \mO_{\P}(-1)^{\oplus 2} ,\] 
where $\P$ is the projective line and we abuse notation by denoting by
$\mO_{\P}(-1)^{\oplus 2}$ the total space of two copies of the
tautological line bundle $\mO_{\P}(-1)$. The quotients $X \qu_+ G$ and
$X \qu_- G$ are isomorphic but the birational transformation relating
them, induced by the variation of git quotient, is a simple flop. 

We consider the wall-crossing formula corresponding to the three-point
invariants with each insertion given by the hyperplane class. In order
to make sense of the non-compact integration in the $d = 0$ case one
must take an equivariant extension with respect to the action of
$\C^\times$ acting by scalar multiplication, and use localization for
the residual $\C^\times$-action. Thus if $\theta$ is the equivariant
parameter for the auxiliary $\C^\times$ and $\xi \in H_G^2(X)$ is the
hyperplane class, \label{c16}
\[\kappa_X^G(\xi + \theta) = \pm \omega + \theta\] 
where $\omega \in H^2(X \qu_\pm G)$ is the symplectic class
integrating to $1$ on the zero-section
$\P \subset \mO_{\P}(-1)^{\oplus 2}$. The fixed point set of the
residual $\C^\times$ action is the zero section of $X \qu_\pm G$,
which acts with weights $\pm 2$ (with multiplicity two) on the normal
bundle. The degree zero moduli space is $X \qu_\pm G$ itself, and
integration of $\alpha = \xi^3$ \label{c17} over this non-compact
space can be defined via localization as
\begin{eqnarray*}  I_\pm &:=& \int_{X \qu_\pm G} \kappa_{X,\pm}^G (\alpha) 
  := \int_{\P} (\pm \omega + \theta)^3 \cup
  \Eul( \mO_{\P}(-1)^{\oplus 2})^{-1} \\ &=& 
  \int_{\P} \frac{(\pm \omega + \theta )^3}{ (- \omega \mp 2 \theta)^{2}}
  \\ &=& \int_{\P} \pm 3 \omega \theta^2 (2
         \theta)^{-2} + \theta^3 (2 \theta)^{-2} ( 1 - \omega/( \pm 2 \theta) +
  ....)^2 \\ &=& \int_{\P} \frac{\theta}{4}\pm \frac{3\omega}{4} \mp
  \frac{\omega}{4}\\ &=&\pm 3/4 \mp 1/4 = \pm 1/2 .\end{eqnarray*}
The auxiliary circle action naturally extends to the various master spaces
involved, and the wall-crossing formula holds in this case as well,
defining each contribution via localization.  There is a unique fixed
point $0 \in X$, with normal bundle isomorphic to $X$.  Thus the
wall-crossing term is
\[ \Resid_\xi \xi^3/( \xi^2 (-\xi)^2)  = 1 .\]
Thus Kalkman's wall-crossing formula reduces to 
\[I_+ - I_- = (1/2) - (-1/2) = 1 .\]

We now study wall-crossing for invariants of positive degree. For each
$d \in H_2^G(X,\Z) \cong \Z$, there is a unique gauged map
$u: C \to X/G$ of class $d$ mapping $C$ to the fixed point $0 \in X$.
Its normal complex $Rp_* e^* TX$ has weight $1$ with multiplicity
$2 + 2d$ and weight $-1$ with multiplicity $2 - 2d$, by
Riemann-Roch. Thus the wall-crossing term for class $d$ is
\[ \Resid_\xi \xi^3/( \xi^{2 + 2d} (-\xi)^{2-2d}) = 1.\]
The class $d$ occurs in the expression
$\tau_{X \qu_\pm G} \kappa_{X,\pm}^G$ iff $\pm d \ge 0$. \label{resp}
Indeed, the contribution of $q^d$ is the contribution of $q^d$ to the
gauged Gromov-Witten potential in Theorem \ref{largerel}, which by
definition is an integral over Mundet stable maps $\ol{\M}^G(C,X,d)$,
which in this case is the git quotient of
\[ \ol{\M}^G(C,X,d)  \cong H^0(\mO_C(d) \times_{\C^\times} (\C^{\oplus 2}_1 \oplus \C^{\oplus
  2}_{-1})) \qu \C^\times  \] 
by the $\C^\times$ action corresponding to the polarization $\LL_\pm$,
where $\C_{ \pm 1}$ denote the one-dimensional representations with
weight $\pm 1$.  For $d \neq 0$, only one factor
\[H^0(\mO_C(d)^\times \times_{\C^\times} (\C^{\oplus 2}_{\pm 1})) \cong
\C_{\pm 1}^{\oplus 2 (|d| + 1)}\]
\label{c18} is non-vanishing. It follows easily that the moduli space
of gauged maps of class $d$ is empty for the polarization $\LL_\mp$
corresponding to the character $\mp 1$. Each $q^d, d \neq 0$ appears
in only one of the Novikov rings $\Lambda_{X,\pm}^{G,\ge 0}$. Thus the
higher degree integrals for $\LL_+$ are $1$ for $d > 0$ and $0$ for
$d < 0$ resp. for $\LL_-$ are $- 1$ for $d< 0 $ and $0$ for $d > 0$,
each corresponding to an integral over multiple covers of the zero
section. Summing over classes $d$ the wall-crossing formula for gauged
invariants in Theorem \ref{qkalkcirc} becomes
\[ \left(\frac{1}{2} + \sum_{d > 0} q^d \right) - \left( - \frac{1}{2} - \sum_{d < 0}  q^d \right) = \sum_{d \in
  \Z} q^d \underset{a.e.}{=} 0 .\]
The reader may compare with the treatment of simple flops in
\cite{liruan:surg}, \cite[Corollary 3.2]{lee:fmi} which contains
essentially the same computation.  Note that here we have not given an
explicit description of the maps $\kappa_{X,\pm}^G$ which relate the
two graph potentials.  This ends the example.
\end{example}  

\section{Kalkman's wall-crossing formula} 

In this section we give a proof of Kalkman's formula
\ref{kalk1} first for circle group actions and then for the general
case. 

\subsection{The wall-crossing formula for circle actions} 

The wall-crossing formula is somewhat simpler for the case of a circle
group, so we begin with that case.  Let $G = \C^\times$ and $X$ a
smooth projective $G$-variety as above, equipped with polarizations
$\LL_\pm \to X$.  The proof of the wall-crossing formula is by
localization on a proper Deligne-Mumford stack $\ti{X}$ whose fixed
points include $X \qu_\pm G$ and the fixed point components $X^{G,t}$
with $t \in (-1,1)$.  From the point of view of symplectic geometry,
this is the {\em symplectic cut} construction in Lerman \cite{le:sy2},
but we need the algebraic approach here given in Thaddeus
\cite[Section 3]{th:fl}.

Recall from the introduction the notation for variation of git
quotient. Let
\[\LL_t := \LL_-^{(1-t)/2}
\otimes \LL_+^{(1+t)/2} , t \in (-1,1)\cap \Q\] 
denotes the family of rational polarizations interpolating between
$\LL_\pm$.  Denote by $X \qu_t G$ the corresponding git quotients,
by which we mean the stack-theoretic quotient $X^{\ss,\LL_t}/G$ of the
semistable locus $X^{\ss,\LL_t}$ for $\LL_t$ by the action of
$G$. (Most authors would enclose a stack-theoretic quotient by square
brackets, which we omit since we always mean stack-theoretic quotient
unless otherwise stated.)  In symplectic geometric terms, this means
that if
\[\Phi_\pm: X \to \g_\R^\dual \]
are moment maps for action of a maximal compact $G_\R$ of $G$ on
$\LL_\pm$ with respect to a unitary connection then 
\[ \Phi_t := \frac{ 1-t}{2}
\Phi_-  + \frac{1 + t}{2} \Phi_+ : X \to \g_\R^\dual \]
is a moment map for the action of $G$ on $\LL_t$. Even more concretely,
if $\Phi_\pm$ are equal up to a constant $c = \Phi_+ - \Phi_-$ then
the maps
\[ \Phi_t = \frac{\Phi_- + \Phi_+}{2} + \frac{t}{2} c \]  
are all equal up to a constant $\frac{t}{2}c $ depending on $t$.  For
the following see Thaddeus \cite[Section 3]{th:fl}.

\begin{lemma} [Existence of a master space] \label{master} Suppose
  that $G$ acts with finite stabilizers on the semistable loci
  $X^{\ss,\pm}$ and for any $t \in (-1,1)$ and any $t$-semistable
  point $x \in X$, $G_x$ acts with finite stabilizer on the fiber
  $(\LL_+\otimes \LL_-^{-1})_x$.  There exists a smooth proper
  Deligne-Mumford $\C^\times$-stack $\ti{X}$ equipped with a line
  bundle ample for the coarse moduli space whose git quotients
  $ \ti{X} \qu_t \C^\times$ are isomorphic to those $ X \qu_t G$ of
  $X$ by the action of $G$ with respect to the polarization $\LL_t$
  and whose fixed point set $\ti{X}^{\C^\times}$ is given by the union
\[ \ti{X}^{\C^\times} = (X \qu_- G) \cup (X \qu_+ G) \cup \left( \bigcup_{t \in
    (-1,1)} X^{G,t}\right) \]
where $X^{G,t}$ is the component of $X^G$ that is semistable for
parameter $t$.  Furthermore, the normal bundle of $\nu_{G,t}$ of
$X^{G,t}$ in $\ti{X}$ is isomorphic to the normal bundle of $X^{G,t}$
in $X$, equivariantly after the identification $G \cong \C^\times$.
\end{lemma}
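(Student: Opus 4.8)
The plan is to build $\ti X$ as the master space of Thaddeus \cite[\S3]{th:fl}, the algebraic form of Lerman's symplectic cut \cite{le:sy2}. Concretely, let $W := \P_X(L_- \oplus L_+)$, the $\P^1$-bundle associated to the two polarizations, with projection $\pi \colon W \to X$. It carries the $G$-action lifted from the action on $X$ together with the equivariant structures on $L_\pm$, and it also carries a commuting \emph{fiberwise} action $\C^\times_{\mathrm{fib}}$ scaling the $L_+$-summand, whose fixed locus is the pair of sections $X_- = \P(L_-)$ and $X_+ = \P(L_+)$. Polarize $W$ by $\mathcal{L} := \mO_W(1) \otimes \pi^*(L_- \otimes L_+)$, so that $\mathcal{L}$ restricts to $L_+$ on $X_-$ and to $L_-$ on $X_+$, and set $\ti X := W \qu G$, the stack-theoretic GIT quotient for $\mathcal{L}$. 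Then $\ti X$ inherits the $\C^\times_{\mathrm{fib}}$-action --- this is the residual action which, by the abuse of notation in the statement, is again denoted $G$, so that $\ti X \qu_t G$ and $\ti X^G$ refer to $\C^\times_{\mathrm{fib}}$ --- and $\mathcal{L}$ descends to a line bundle ample on the coarse space; the pencil $L_t$ on $X$, equivalently twisting the $\C^\times_{\mathrm{fib}}$-equivariant structure of $\mathcal{L}$ by characters, produces the family of linearizations used for $\ti X \qu_t G$.

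Then I would verify the three assertions. \emph{Smoothness, properness, ampleness:} since $W$ is smooth projective it is enough to check stable$=$semistable on $W$ for $\mathcal{L}$, i.e. that $G$ acts on $W^{\ss}$ with finite stabilizers and no orbit is properly semistable. Over the part of $W$ lying above $X^{\ss}_\pm$ this is stable$=$semistable for $L_\pm$; over the fibers above $X^G$ it is exactly the hypothesis, which says $G$ acts with nonzero weight on $(L_+ \otimes L_-^{-1})_x$ and hence with finite stabilizers on $W_x = \P((L_-)_x \oplus (L_+)_x)$ away from its two poles. Granting this, $\ti X = W^{\ss}/G$ is a smooth proper Deligne--Mumford stack of dimension $\dim X$ with the descent of $\mathcal{L}$ ample on its coarse moduli space. \emph{GIT quotients:} quotient in stages, $\ti X \qu_t G = (W \qu G) \qu_t \C^\times_{\mathrm{fib}} = W \qu_t (G \times \C^\times_{\mathrm{fib}}) = (W \qu \C^\times_{\mathrm{fib}}) \qu_t G$; since $\C^\times_{\mathrm{fib}}$ acts fiberwise on $\pi$ with the sections fixed, $W \qu \C^\times_{\mathrm{fib}} \cong X$ with reduced polarization equal to $L_t$ up to a character twist, so $\ti X \qu_t G \cong X \qu_t G$. \emph{Fixed locus:} a point of $\ti X = W^{\ss}/G$ is $\C^\times_{\mathrm{fib}}$-fixed iff its $\C^\times_{\mathrm{fib}}$-orbit lies inside its $G$-orbit; on the two sections $X_\pm$ (the fiberwise fixed locus of $W$) this is automatic, and since $\mathcal{L}$ restricts to $L_\pm$ there they contribute $X \qu_+ G$ and $X \qu_- G$; off the sections the condition forces the base point into $X^G$ with $G$ acting nontrivially on the fiber --- again the hypothesis --- and a Hilbert--Mumford computation in $W_x = \P^1$ shows such a point is $W$-semistable iff $\Phi_-(x)$ and $\Phi_+(x)$ have strictly opposite signs, equivalently iff $\Phi_t(x) = 0$ for a unique $t \in (-1,1)$; these contribute $\bigsqcup_{t \in (-1,1)} X^{G,t}$.

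The main obstacle is the verification that stable$=$semistable holds on $W$ --- equivalently, that $\ti X$ is Deligne--Mumford and separated rather than merely an Artin stack --- at the finitely many wall values of $t$, together with the ``in stages'' comparison of GIT quotients there; this is precisely where the two hypotheses are used in an essential way, the finite-stabilizer condition on $(L_+ \otimes L_-^{-1})_x$ for $t$-semistable $x \in X^G$ being designed so that each fixed component is $t$-semistable for exactly one parameter and the transverse fiber directions of $W$ over it get destabilized. The remaining points --- smoothness, properness, ampleness of the descended bundle, and the Hilbert--Mumford bookkeeping in the fiber --- are routine once the construction is in place, and I would refer to \cite[\S3]{th:fl} for those details.
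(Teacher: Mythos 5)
Your proof is correct and follows the same approach as the paper's: both construct $\ti X$ as the GIT quotient $\P(L_-\oplus L_+)\qu G$ following Thaddeus \cite[\S 3]{th:fl}, verify stable$=$semistable using the hypotheses, and identify the fixed locus of the residual $\C^\times$ with the two sections plus the intermediate $X^{G,t}$. The twist $\pi^*(L_-\otimes L_+)$ in your choice of $\mathcal{L}$ is only a normalization of the relative hyperplane bundle that pins down conventions so that $\pi_*\mathcal{L}^k \cong \bigoplus_{k_-+k_+=k} L_-^{k_-}L_+^{k_+}$; it yields the same semistable locus and hence the same master space as the paper's $\mO_{\P(L_-\oplus L_+)}(1)$, and your quotient-in-stages and Hilbert--Mumford bookkeeping make explicit what the paper leaves to the reader.
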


\begin{proof} The projectivization $\P(\LL_- \oplus \LL_+)$ of the
  direct sum $\LL_- \oplus \LL_+$ of the polarizations $\LL_\pm \to X$
  has a natural polarization given by the relative hyperplane bundle
  $\mO_{\P(\LL_- \oplus \LL_+)}(1)$ having fibers \label{c19}
\begin{equation} \label{relhyp} \mO_{\P(\LL_- \oplus \LL_+)}(1)_{[l_-,l_+]} 
  = \on{span}(l_- +
l_+) .\end{equation} 
Let 
\[ \pi: \P(\LL_- \oplus \LL_+) \to X \]  
denote the projection.  (A word of warning: the notation $\pi$ will be
used for a number of different projections in this paper.)  The group
$\C^\times$ acts on $\P(\LL_- \oplus \LL_+)$ by
\[ w [l_-,l_+] = [l_-, wl_+], \quad w \in \C^\times, l_\pm \in \LL_\pm
. \] 
For $k \ge 0$ the space of sections of
$\mO_{\P(\LL_- \oplus \LL_+)}(k)$ has a decomposition
\begin{equation} \label{hdecomp} H^0(\P(\LL_- \oplus \LL_+),\mO_{\P(\LL_- \oplus \LL_+)}(k))
\cong \bigoplus_{k_- + k_+ = k} 
H^0(X, \LL_-^{\otimes k_-} \otimes \LL_+^{\otimes
  k_+})  \end{equation} 
under the natural $\C^\times$-action with eigenspaces given by the
sections of 
\[ \pi^* \LL_-^{k_-} \otimes \pi^* \LL_+^{k_+}, \quad k_- + k_+ = k,
k_\pm \ge 0 .\]  
The $G$-semistable locus in $\P(\LL_- \oplus \LL_+)$ is the union of
non-vanishing loci of \label{c20} non-zero invariant eigensections.  Hence
\begin{equation} \label{union} \P(\LL_- \oplus \LL_+)^{\ss} =  
  X^{\ss,-} \cup X^{\ss,+} \cup \bigcup_{t \in [-1,1]}
\pi^{-1}(X^{\ss,t}) \cap (\P(\LL_- \oplus \LL_+) - \P(L_-) -
\P(\LL_+))\end{equation} 
where $X^{\ss,t} \subset X$ is the semistable locus for $\LL_t$ and
$X^{\ss,\pm}$ are considered subsets of $\P(\LL_- \oplus \LL_+)$ via
the isomorphisms $X \to \P(\LL_\pm)$.  Let
\[ \ti{X} := \P(\LL_- \oplus \LL_+) \qu G \] 
denote the geometric invariant theory quotient, by which we mean the
stack-theoretic quotient of the semistable locus.  The assumption on
the action of the stabilizers in Lemma \ref{master} implies that the
action of $G$ on the semistable locus in $\P(\LL_- \oplus \LL_+)$ has
only finite stabilizers. \label{c21}   It follows that $\ti{X}$ is a proper smooth
Deligne-Mumford stack.  The quotient $\ti{X}$ contains the quotients
of $\P(\LL_\pm) \cong X$ with respect to the polarizations $\LL_\pm$,
that is, $X \qu_\pm G$.

Next we describe the fixed point set. The fixed points for the action
of $\C^\times$ on $\ti{X}$ are represented by pairs $[l_-,l_+]$ with a
positive dimensional stabilizer under the action of
$G \times \C^\times$. Necessarily either $l_- = 0, l_+ = 0$, or
$l_-,l_+$ are both non-zero but the projection to $X$ is $G$-fixed. In
the latter case, semistability implies that there exists an invariant
section of
$\LL_t=\LL_-^{(1-t)/2}\otimes \LL_+^{(1+t)/2}, t \in [-1,1]$,
non-vanishing at $x$. Since $l_\pm$ are both non-zero, the weight $t$
of $\C^\times$ on the fiber cannot be in $\{-1, 1 \}$, hence $x$
is $t$-semistable for some $t \in (-1,1)$.  The normal bundle $\nu $of
$\ti{X}^{\C^\times}$ lifts to the normal bundle $\ti{\nu}$ for the
fixed point set some one-parameter subgroup in
$\P(\LL_- \oplus \LL_+)$, which projects to the normal bundle in
$X^{\C^\times}$.  Both projections have trivial fiber, hence the claim
on normal bundles.
\end{proof}

\begin{proof}[Proof of the classical Kalkman's wall-crossing Theorem \ref{kalk1}]   
  First note that the integrand in the wall-crossing formula lifts to
  the master space in a natural way: The projection
  $\P(\LL_- \oplus \LL_+) \to X$ is $G$-equivariant and
  $\C^\times$-invariant. Composing the pull-back map
\[H_G(X) \to H_{G \times \C^\times}( \P(\LL_- \oplus \LL_+))\] 
with the Kirwan map
\[H_{G \times \C^\times}( \P(\LL_- \oplus \LL_+)) \to H_{\C^\times}(\ti{X})\] 
one obtains a canonical map
\[\delta: H_G(X) \to H_{\C^\times}(\ti{X}) .\]  
The composition of $\delta$ with the Kirwan map
\[\ti{\kappa}_{X,t}^{\C^\times}: H_{\C^\times}(\ti{X}) \to H(\ti{X}
\qu_t \C^\times ) = H(X \qu_t G)\]
is pull-back to the $t$-semistable locus and so equal to
\[ \ti{\kappa}_{X,t}^{\C^\times} \circ \delta = \kappa_{X,t}^G: H_G(X) \to H(X
  \qu_t G) .\] 
  In particular, $\delta(\alpha) \in H_{\C^\times}(\ti{X})$ restricts
  to $\kappa^G_{X,\pm} \alpha$ on the two distinguished fixed point
  components $X \qu_\pm G \subset \ti{X}^{\C^\times}$.

  Taking the residue of the localization formula for the circle action on
  the master space gives Theorem \ref{kalk1}, see Lerman \cite{le:sy2}.
  Indeed for any equivariant class $\alpha \in H_G(X)$ of top degree,
  its pullback to $\P(\LL_- \oplus \LL_+)$ descends to a class
  $\delta(\alpha) \in H_{\C^\times}(\ti{X})$ whose restriction to
  $X \qu_\pm G$ is $\kappa_{X,\pm}^G(\alpha)$, and whose restriction
  to $X^{G,t} \subset \ti{X}^{G}$ is $\iota_{X^{G,t}} \alpha$.  Since
 \[ \deg(\delta(\alpha)) = \deg(\alpha) = \dim(\ti{X}) - 2 ,\]
 the integral of $\delta(\alpha)$ over $\ti{X}$ vanishes.  On the
 other hand, by localization the integral of $\delta(\alpha)$ is
  \begin{equation} \label{locint} \int_{[\ti{X}]} \delta(\alpha) =
    \int_{[X \qu_- G]} \frac{\kappa_{X,-}^G(\alpha)}{\Eul_G(\nu_-)} +
    \int_{[X \qu_+ G]} \frac{\kappa_{X,+}^G(\alpha)}{\Eul_G(\nu_+)} +
    \sum_{t \in (-1,1)} \int_{[X^{G,t}]} \frac{\iota^*_{X^{G,t}}
      \alpha}{\Eul_G(\nu_{X^{G,t}})} \end{equation}
where $\nu_\pm \to X \qu_\pm G$ are the normal bundles to
$X \qu_\pm G$ in $\ti{X}$.  Since the normal bundle of the ``sections
at zero and infinity'' $\P(\LL_\pm)$ in $\P(\LL_- \oplus \LL_+)$ may
be canonically identified with $(\LL_+ \otimes \LL_-^{-1})^{\pm 1}$,
the group $\C^\times $ acts on $\nu_\pm $ with weights $\mp 1$.  Hence
the inverted Euler classes are
\[ \Eul_G(\nu_\pm)^{-1} = ( \mp \xi + c_1(\nu_\pm))^{-1} = \mp \xi^{-1}
(1 + c_1(\nu_\pm) (\mp \xi)^{-1} + \ldots) .\]
Taking residues on both sides of \eqref{locint} one obtains
\[ 0 = - \tau_{X \qu_+ G} \kappa_{X,+}^G \alpha + \tau_{X \qu_- G} 
\kappa_{X,-}^G \alpha + \Res_\xi \sum_{t \in (-1,1)} 
\int_{[X^{G,t}]} \frac{\iota^*_{X^{G,t}} \alpha}{ \Eul_G(\nu_{X^{G,t}})} \]
as claimed.
\end{proof} 

\subsection{Kalkman wall-crossing for actions of non-abelian groups} 

Kalkman's wall-crossing formula can be used to study the intersection
pairings for variation of git for the action of an arbitrary connected
complex reductive group $G$ on a smooth polarized projective variety
$X$ as follows.  As above, let 
\[ \ti{X} = \P(\LL_- \oplus \LL_+) \qu G .\]
We examine the structure of the fixed point set of $\C^\times$ on
$\ti{X}$. For any $\zeta \in \g$, denote by $G_\zeta \subset G$ the
stabilizer of the line $\C\zeta$ under the adjoint action of $G$.
Recall from the introduction that 
\[   X^\zeta = \{ x \in X \ | \ z x = x, \ \forall z \in
\C^\times_\zeta \} \]
is the fixed point set of the one-parameter subgroup
$\C^\times_\zeta$.  Since $G_\zeta$ commutes with $\C^\times_\zeta$,
it acts on $X^\zeta$.  The $t$-semistable locus
\[ X^{\zeta,t} \subset X^\zeta \]
has, by assumption, the property that the action of
$G_\zeta/\C^\times_\zeta$ has finite stabilizers, that is,
$ \g_x = \C\zeta$ for all $ x \in X^{\zeta,t} .$
It follows the flowout of the semistable locus is 
\[ G X^{\zeta,t} = G \times_{G_\zeta} X^{\zeta,t} .\]
\label{c211}  In particular, there exists a canonical action of $\g/\g_\zeta$
(considered as an abelian group) fiber-wise on the normal bundle
$\nu_{X^{\zeta,t}}$.  Denote by $\nu_{X^{\zeta,t}}/(\g/\g_\zeta)$
the quotient by the action.

\begin{lemma}[Structure of the fixed point components]
\label{fplem}
Suppose that stable=semistable for the $G$-action on $\P(\LL_- \oplus
\LL_+)$, so that $\ti{X}$ is a smooth proper Deligne-Mumford stack with
$\C^\times$ action, constructed in the proof of Lemma \ref{master}.
\begin{enumerate}  
\item \label{itema} For any fixed point $ \ti{x} \in \ti{X}^{\C^\times}$ with $\ti{x} =
  [l]$ for some $l \in \P(\LL_- \oplus \LL_+)$, there exists
$\zeta \in \g$ such that 
\[ \forall z \in \C^\times, \quad  zl = z^\zeta l .\]
\item \label{itemb} Any fixed point $ \ti{x} \in \ti{X}^{\C^\times}$
  is equal to $[l]$ for some $l \in \P(\LL_- \oplus \LL_+)$ in the
  fiber over $x \in X$ that is $t$-semistable for some $t \in (-1,1)$
  and has stabilizer generated by $\zeta \in \g$, with the property
  that the weight of the one-parameter subgroup generated by $\zeta$
  on $(\LL_-^{(1-t)/2)} \otimes \LL_+^{(1+t)/2})_x$ vanishes:
\[ z^\zeta l = l, \quad \forall l \in (\LL_-^{(1-t)/2)} \otimes
\LL_+^{(1+t)/2})_x .\]
\item \label{itemc} Denote by
  $X^\zeta \qu_t (G_\zeta/ \C^\times_\zeta)$ the git quotient of
  $X^\zeta$ by the group $G_\zeta/\C^\times_\zeta$ with respect to the
  polarization determined by the restriction of
  $\LL_-^{(1-t)/2} \otimes \LL_+^{(1+t)/2}$.  For each $\zeta \in \g$
generating a one-parameter subgroup $\C^\times_\zeta$ \label{c22}
  there is a morphism
\[ \iota_\zeta:
X^\zeta \qu_t (G_\zeta/ \C^\times_\zeta) \to \ti{X}^{\C^\times}.\] 
The images of all the $\iota_\zeta$ cover $\ti{X}^{\C^\times}$,
disjointly after passing to conjugacy classes of one-parameter
subgroups $\C^\times_\zeta$. \label{c23}
\item \label{itemd} For any $\alpha \in H_G(X)$, the pull-back of
  $\ti{\kappa}(\alpha) | \ti{X}^{\C^\times}$ under $\iota_\zeta$ is
  equal to image of $\alpha$ under the restriction map $H_G(X) \to
  H_{\C^\times_\zeta}(X^\zeta \qu_t (G_\zeta/\C^\times_\zeta))$.
\item \label{iteme} The pull-back of the normal bundle
  $\nu_{\ti{X}^{\C^\times}}$ of $\ti{X}^{\C^\times}$ under
  $\iota_\zeta$ is isomorphic to the quotient of
  $\nu_{X^{\zeta,t}} / (\g/\g_\zeta)$ by a fractional action of
  $G_\zeta / \C^\times_\zeta$, via an isomorphism that intertwines the
  action of $\C^\times_\zeta$ on
  $(\nu_{X^{\zeta,t}} / (\g/\g_\zeta) ) \qu (G_\zeta /
  \C^\times_\zeta)$
  with the action of $\C^\times$ on $\nu_{\ti{X}^{\C^\times}}$.
\end{enumerate}
\end{lemma} 

\begin{proof} \eqref{itema} Denote by $\P(\LL_- \oplus \LL_+)^\times$
  the complement of $\P(\LL_-) \cup \P(\LL_+)$ in
  $\P(\LL_- \oplus \LL_+)$.  If $[l] \in \ti{X}^{\C^\times}$, with
  $l \in \P(\LL_- \oplus \LL_+)^\times$ then $[l] \in \ti{X}^\xi$,
  where $\xi$ is a generator of the Lie algebra of $\C^\times$ and
  $\ti{X}^\xi$ is the zero set of the vector field $\xi_{\ti{X}}$
  generated by $\xi$.  Since $\ti{X}$ is the quotient of
  $\P(\LL_- \oplus \LL_+)$ by $G$, if $\xi_L$ denotes the vector field
  on $\P(\LL_- \oplus \LL_+)$ generated by $\xi$ then
  $ \xi_L(l) = \zeta_L(l)$ for some $\zeta \in \g$.  Since $G$ acts
  locally freely $\zeta$ must be unique.  Integrating gives
  $z\cdot l = z^\zeta l$ for all $z \in \C^\times$, hence
  \eqref{itema}.  \label{fplemproof} \eqref{itemb} is a consequence of
  \eqref{itema} and the definition of semistability in terms of
  invariant sections.  Item \eqref{itemc} is a consequence of items
  \eqref{itema} and \eqref{itemb}.  Item \eqref{itemd} follows from
  the fact that $\iota_\zeta \circ \ti{\kappa}$ is pullback to
  $X^\zeta \qu_t (G_\zeta/\C^\times_\zeta)$.  For \eqref{iteme}, the
  normal bundle $\nu_{\ti{X}^{\C^\times}}$ of $\ti{X}^{\C^\times}$
  restricted to the image of $\iota_\zeta$ is isomorphic to the
  quotient of the normal bundle of
  $G \times_{G_\zeta} \P(\LL_- \oplus \LL_+)^{\xi - \zeta}$ by $G$,
  which in turn is isomorphic to the quotient of the normal bundle of
  $\P(\LL_- \oplus \LL_+)^{\xi - \zeta}$ by $\g/\g_\zeta$.  The
  projection to $X$ identifies this normal bundle with the normal
  bundle $\nu_{X^{\zeta}}$ to $X^\zeta$ quotiented by $\g/\g_\zeta$.
  Thus $\iota_\zeta^* \nu_{\ti{X}^{\C^\times}}$ is isomorphic to a
  quotient of $\nu_{X^{\zeta}}/ (\g/\g_\zeta)$ by
  $G_\zeta/\C^\times_\zeta$.
\end{proof} 

\begin{remark} An anonymous referee has pointed out the following
  alternative perspective on the fixed point loci.  Consider the
  $G_\zeta$-equivariant bundle
\[ \ti{\nu} := \nu_{X^{\zeta,t}} / (\g/\g_\zeta) \to X^{\zeta,t} .\] 
Taking quotients gives a vector bundle
$\ti{\nu}/G_\zeta \to X^{\zeta,t}/G_\zeta $ over the Artin stack
$X^{\zeta,t}/G_\zeta$.  On the other hand, we have a
$\C^\times$-bundle
\[ P_\zeta := (\LL_- \otimes \LL_+^{-1})^\times \to X^{\zeta,t} \] 
which is a sub-bundle of $\P(\LL_- \oplus \LL_+) |_{X^{\zeta,t}}$, and
where superscript $^\times$ denotes removal of the zero section.  The
map $P_\zeta \to X^{\zeta,t}$ is equivariant for the $G_\zeta$-action
and the Deligne-Mumford stack $P_\zeta/G_\zeta$ gives a component of
$\ti{X}^{\C^\times}$.  These spaces fit into a diagram (quotients
being stack-theoretic)
\[\begin{tikzcd} 
P_\zeta/G_\zeta 
\arrow[r,hook]
\arrow[d,"p_\zeta"] 
& \ti{X}^{\C^\times}
 \\
X^{\zeta,t} / G_\zeta  .& 
\end{tikzcd}\]
The pull-back of the vector bundle $\ti{\nu}/G_\zeta 
\to X^{\zeta,t}/G_\zeta$ by $p_\zeta$ is isomorphic to the restriction
of the normal bundle $\nu_{\ti{X}^{\C^\times}}$ to the component
$P_\zeta/G_\zeta$:
\[ p_\zeta^* [\ti{\nu}/G_\zeta] \cong \nu_{\ti{X}^{\C^\times}}
  |_{P_\zeta/G_\zeta} .\]
  Here the left-hand side $p_\zeta^* [\ti{\nu}/G_\zeta]$ is naturally
  a $\C^\times$-equivariant bundle over $P_\zeta/G_\zeta$ (since
  $p_\zeta^* \ti{\nu}$ is a $G_\zeta \times \C^\times$-equivariant
  bundle over $P_\zeta$) and this isomorphism intertwines the
  $\C^\times$-actions.  The various pull-backs fit into a commutative
  diagram of maps in equivariant cohomology
\[
\begin{tikzcd} 
H_G^*(X) 
\arrow[rrr, "\ti{\kappa}"] \arrow[rd] \arrow[d]
& & & H^*_{\C^\times}(\ti{X})  \arrow[d]
\\
H_{G_\zeta}^*(X^{\zeta,t}) \arrow[r,"p_\zeta^*"] 
& 
H^*_{G_\zeta \times \C^\times}(P_\zeta) \arrow[r,equal]
& 
 H^*_{\C^\times}(P_\zeta/G_\zeta) 
& 
H^*_{\C^\times}( \ti{X}^{\C^\times}) 
\arrow[l]
\end{tikzcd}  
\]
It follows that the fixed point contribution from 
$X^{\zeta,t}$ in the wall-crossing formula is given by 
\[ \Res_\xi \int_{P_\zeta/G_\zeta} \ti{\kappa}(\alpha) 
\cup p_\zeta^* \Eul_{G_\zeta}(\ti{\nu})^{-1} . \] 
One may re-write this as an integral over
$X^{\zeta,t}/ (G_\zeta/\C^\times)$ as follows.  Let
$k \in \Z - \{ 0 \} $ be the weight of the $C^\times_\zeta$-action on
fibers of $P_\zeta \to X^{\zeta,t}$; by changing the choice of
one-parameter subgroup by a sign we may assume that $k$ is positive.
The projection $P_\zeta \to X^{\zeta,t}$ defines a $B(\Z/k\Z)$-bundle
\[ \pi_\zeta : P_\zeta/G_\zeta \to
X^{\zeta,t}/(G_\zeta/\C_\zeta^\times) \] 
Choose a splitting of the Lie algebra
\[ g_\zeta \cong \C \zeta \oplus (\g_\zeta/\C \zeta) .\]
Let 
\[ \xi \in H^2_{\C^\times}(\pt), \quad  \hbar \in H^2_{\C^\times_\zeta}(\pt) \] 
denote the standard generators.  Consider the commutative diagram
\[ 
\begin{tikzcd} 
H^*_{\C^\times}(P_\zeta/G_\zeta) = 
H_{G_\zeta \times
  \C^\times}(P_\zeta)  
= H_{G_\zeta}^*(X^{\zeta,t})  \arrow[r,"\cong"] 
& H^*(X^{\zeta,t}/(G_\zeta/\C^\times_\zeta)) \otimes 
\C(\hbar)  
 \\ 
H^*(P_\zeta/G_\zeta) 
\arrow[u,hook]
& 
H^*(X^{\zeta,t} / (G_\zeta/\C^\times_\zeta)) 
\arrow[u,hook] \arrow[l,"\pi^*_\zeta"] \arrow[u,hook]
\end{tikzcd} .
\] 
Note that the identification of the $\C^\times_\zeta$-equivariant
parameter $\hbar$ in $H^*_{G_\zeta \times \C^\times}(P_\zeta)$
requires the choice of a splitting.  We claim that the pullback of
$\xi$ to $ H^*_{\C^\times}(P_\zeta/G_\zeta)$ corresponds to a class
$k\hbar + \omega \in H^*(X^{\zeta,t}/(G_\zeta/\C^\times)) \otimes
\C(\hbar)$
for some nilpotent element
$\omega \in H^2(X^{\zeta,t} / (G_\zeta/ \C^\times_\zeta)$ where $k$ is
the fiber weight above.  Indeed let
\[ K = \{ (z,z^k) \in
\C^\times_\zeta \times \C^\times  \}  \] 
be the subgroup that acts trivially on $P_\zeta$.  
Then 
\[ \xi - k \hbar{} \in H^2_{G_\zeta \times \C^\times}(P_\zeta) \] 
lies in the image of
\[ H^2_{G_\zeta \times \C^\times} (\pt) 
\to H^2_{(G_\zeta \times \C^\times)/K}(P_\zeta) 
\to H^2_{G_\zeta \times \C^\times}(P_\zeta) .\]
Since $(G_\zeta \times \C^\times)/K$ acts on $P_\zeta$ locally freely,
$\xi - k \hbar$ is nilpotent.  This proves the claim.  The splitting
also defines action of
$(G_\zeta/\C^\times_\zeta) \times \C^\times_\zeta$ on $\ti{\nu}$, so
that
\[ \Eul_{G_\zeta \times  \C^\times}(p^*_\zeta \ti{\nu}) =
    \Eul_{\C^\times_\zeta}(\ti{\nu}/(G_\zeta/\C_\zeta^\times) )\] 
under the isomorphism 
\[ H^*_{G_\zeta \times \C^\times}(P_\zeta) 
\cong H^*( X^{\zeta,t}/(G_\zeta/\C^\times)) 
\otimes \C[\hbar] . \] 
The fixed point contribution can be rewritten as 
\[ \Res_\xi \frac{1}{k} 
\left. 
\int_{X^{\zeta,t}/(G_\zeta/\C^\times_\zeta) }
\alpha|_{X^{\zeta,t} }
\cup \Eul_{\C^\times_\zeta}(\ti{\nu}/(G_\zeta/\C^\times_\zeta))^{-1}
\right|_{\hbar = (\xi - \omega)/ k } \]
where we regard $\alpha |_{X^{\zeta,t}}$ as an element of
  $H^*_{G_\zeta}(X^{\zeta,t}) \cong
  H^*(X^{\zeta,t}/(G_\zeta/\C^\times_\zeta))$ using the splitting and
  the factor $1/k$ arises as the degree of 
\[ \pi_\zeta : P_\zeta/G_\zeta \to
X^{\zeta,t}/(G_\zeta/\C^\times_\zeta) .\]
Under the substitution $\hbar = (\xi - \omega)/k$ we have for $i \in
\Z_+$ 
\[ \hbar^{-i} \mapsto \frac{k^i}{(\xi - \omega)^i} = k^i \left( 
\frac{1}{\xi^i} +  \frac{i \omega}{\xi^{i+1}} 
+ \frac{i (i+1) \omega^2}{ 2 \xi^{i+2}} + \ldots \right) \] 
and thus
\[ \Res_\xi \frac{1}{\hbar^i} |_{\hbar = (\xi - \omega/k)}
= \begin{cases} k & i = 1 \\
0 & \text{otherwise}  \end{cases} \] 
Therefore the above residue equals 
\[ \Res_{\hbar} \int_{X^{\zeta,t}/ (G_\zeta/\C^\times_\zeta)} \alpha
|_{X^\zeta,t} \cup \Eul_{\C^\times_\zeta}(\ti{\nu}/
(G_\zeta/\C^\times_\zeta))^{-1} . \]
This ends the Remark.
\end{remark}

We introduce the following notation for fixed point contributions.

\begin{definition} \label{fpc} For any fixed point component
  $X^{\zeta,t} \subset X^\zeta$ that is $t$-semistable, denote by
  $\nu_{X^{\zeta,t}}$ the normal bundle of $X^{\zeta,t}$ as above.
  Let
\[\tau_{X,\zeta,t}: H_G(X) \to \Q[\xi,\xi^{-1}], \quad
\alpha \mapsto \int_{[X^{\zeta,t} \qu (G_\zeta/\C^\times_\zeta)]}
\frac{\alpha}{\Eul_{\C^\times_\zeta}((\nu_{X^{\zeta,t}}/(\g/\g_\zeta)) / (G_\zeta/\C^\times_\zeta))} \]
where we have omitted the restriction-and-quotient map
$H_G(X) \to H_{\C_\zeta^\times} (X^{\zeta,t}\qu
(G_\zeta/\C^\times_\zeta))$
to simplify notation, and $\xi$ is the equivariant parameter for
$\C^\times_\zeta$.
\end{definition} 

We define an equivalence class on rational elements of the Lie algebra
as follows.  Recall that an element $\zeta \in \g$ is rational if
$\C\zeta$ is the Lie algebra of a one-dimensional subgroup
$\C^\times_\zeta$.  We declare two rational elements $\zeta_0,\zeta_1$
to be {\em equivalent} if the one-dimensional subgroups are conjugate:
\begin{equation} \label{equiv} ( \zeta_0 \sim \zeta_1 ) \iff \left(
    \exists g \in G, \quad \C^\times_{\zeta_0} = g \C^\times_{\zeta_1}
    g^{-1} \right) \end{equation}
or equivalently, $\C\zeta_1$ is related to $\C\zeta_0$
by the adjoint action.  Denote by $[\zeta]$ the equivalent class of a
rational element $\zeta \in \g$.

\begin{theorem} [Kalkman wall-crossing]  \label{kalk2} Let
$X$ be a smooth projective $G$-variety and $\LL_\pm \to X$ polarizations
 with stable=semistable for the $G$ action on $\P(\LL_- \oplus \LL_+)$.
  Then
\begin{equation} \label{kalkman}
\tau_{X \qu_+ G} \kappa_{X,+}^G - \tau_{X \qu_- G} \kappa_{X,-}^G =
\sum_{t \in (-1,1),[\zeta]} 
\Resid_\xi \tau_{X,\zeta,t} \end{equation}
where the sum is over equivalence classes $[\zeta]$.
\end{theorem}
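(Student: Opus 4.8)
The plan is to reduce Theorem~\ref{kalk2} to the circle case by performing equivariant localization for the residual $\C^\times$-action on the master space $\ti X = \P(L_-\oplus L_+)\qu G$ constructed in Lemma~\ref{master}, and then reading off the fixed-point contributions from the structure result Lemma~\ref{fplem}. Both sides of \eqref{kalkman} are linear in $\alpha$ and vanish unless $\alpha\in H_G(X)$ has complementary total degree $\deg\alpha = 2\dim_\C\ti X - 2$: the left-hand side because $\tau_{X\qu_\pm G}\circ\kappa_{X,\pm}^G$ factors through integration over the orbifolds $X\qu_\pm G$, which have complex dimension $\dim_\C\ti X - 1$; and the right-hand side because, by Lemma~\ref{fplem}(d), $\rk_\C\nu_{X^{\zeta,t}} + \dim_\C\bigl(X^{\zeta,t}\qu(G_\zeta/\C^\times_\zeta)\bigr) = \dim_\C\ti X$, so that for $\alpha$ homogeneous of total degree $D$ the integral $\tau_{X,\zeta,t}^{G_\zeta}(\alpha)$ is a monomial in $\Q[\xi,\xi^{-1}]$ of $\xi$-degree $D/2 - \dim_\C\ti X$, whose residue is nonzero precisely when $D = 2\dim_\C\ti X - 2$. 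So one fixes such an $\alpha$ for the rest of the argument.

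First I would descend $\alpha$ to the master space. The pullback $\pi^*\alpha$ along $\pi\colon\P(L_-\oplus L_+)\to X$ is canonically invariant under the fiberwise $\C^\times$, hence refines to a class in $H_{G\times\C^\times}(\P(L_-\oplus L_+))$; restricting to the $G$-semistable locus and descending gives $\ti\alpha\in H_{\C^\times}(\ti X)$. Because the two sections $\P(L_\pm)\cong X$ are carried by $\pi$ identically onto $X$ and contain the $L_\pm$-semistable loci, the reasoning of the proof of Theorem~\ref{kalk1} gives $\ti\alpha|_{X\qu_\pm G} = \kappa_{X,\pm}^G(\alpha)$; and by Lemma~\ref{fplem}(c) the pullback of $\ti\alpha|_{\ti X^{\C^\times}}$ under $\iota_\zeta$ is the image of $\alpha$ under $H_G(X)\to H_{\C^\times_\zeta}\bigl(X^{\zeta,t}\qu(G_\zeta/\C^\times_\zeta)\bigr)$. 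Since $\deg\ti\alpha = 2\dim_\C\ti X - 2$, the equivariant pushforward $\int_{[\ti X]}\ti\alpha\in H_{\C^\times}(\pt)=\Q[\xi]$ lies in negative degree and hence vanishes.

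Next I would apply the localization formula for the $\C^\times$-action on the smooth proper Deligne--Mumford stack $\ti X$, exactly as in the proof of Theorem~\ref{kalk1} (cf.~\cite{le:sy2}). By Lemmas~\ref{master} and~\ref{fplem} the fixed components of $\ti X^{\C^\times}$ are $X\qu_\pm G$, with one-dimensional normal bundles $\nu_\pm$ of weight $\mp 1$, together with the images $\iota_\zeta\bigl(X^{\zeta,t}\qu(G_\zeta/\C^\times_\zeta)\bigr)$ for $t\in(-1,1)$ and $[\zeta]$ a conjugacy class of one-parameter subgroups. The morphism $\iota_\zeta$ is finite of degree $|W_{\C \zeta}/W_\zeta|$ onto its image by Lemma~\ref{fplem}(b), so integration over the image equals $\tfrac{|W_\zeta|}{|W_{\C \zeta}|}$ times integration over $X^{\zeta,t}\qu(G_\zeta/\C^\times_\zeta)$; and by Lemma~\ref{fplem}(d) the pullback under $\iota_\zeta$ of the equivariant Euler class of the normal bundle to this fixed locus is $\Eul_{\C^\times_\zeta}(\nu_{X^{\zeta,t}})$, the residual equivariant parameter being identified with the $\C^\times_\zeta$-parameter $\xi$. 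Feeding these identifications into the localization formula, the vanishing $\int_{[\ti X]}\ti\alpha = 0$ becomes
\begin{multline*}
0 = \int_{[X\qu_- G]}\frac{\kappa_{X,-}^G(\alpha)}{\xi + c_1(\nu_-)} + \int_{[X\qu_+ G]}\frac{\kappa_{X,+}^G(\alpha)}{-\xi + c_1(\nu_+)} \\ + \sum_{t\in(-1,1),\,[\zeta]} \frac{|W_\zeta|}{|W_{\C \zeta}|}\int_{[X^{\zeta,t}\qu(G_\zeta/\C^\times_\zeta)]}\alpha\cup\Eul_{\C^\times_\zeta}(\nu_{X^{\zeta,t}})^{-1}.
\end{multline*}
Since $\kappa_{X,\pm}^G(\alpha)$ has top degree on $X\qu_\pm G$, in each of the first two integrals only the leading $\xi^{-1}$-term of the inverted Euler class survives, giving $\xi^{-1}\tau_{X\qu_- G}\kappa_{X,-}^G(\alpha)$ and $-\xi^{-1}\tau_{X\qu_+ G}\kappa_{X,+}^G(\alpha)$ respectively. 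Applying $\Resid_\xi$ to the displayed identity and rearranging yields \eqref{kalkman}.

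I expect no difficulty of a genuinely new kind beyond the circle case: the substantive content is already absorbed into the construction of $\ti X$ and the analysis of its $\C^\times$-fixed locus in Lemmas~\ref{master} and~\ref{fplem}. The points demanding care are organizational --- tracking which fixed component is which; the multiplicity factor $|W_{\C \zeta}/W_\zeta|$ from the fibers of $\iota_\zeta$, which is the origin of the Weyl-group coefficient in \eqref{kalkman}; the identification of normal data through Lemma~\ref{fplem}(d) together with the fact that $\nu_{X^{\zeta,t}}$ is the normal complex of $X^{\zeta,t}/G_\zeta$ in $X/G$, so that its Euler class is meaningful only after inverting $\xi$; and the preliminary reduction to classes of complementary degree.
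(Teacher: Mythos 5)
Your proposal is correct and takes essentially the same approach the paper intends: the paper does not spell out a proof of Theorem~\ref{kalk2}, but it manifestly expects the reader to combine Lemma~\ref{master}, Lemma~\ref{fplem}, and the localization argument already carried out for Theorem~\ref{kalk1}, which is exactly what you do, including the correct derivation of the Weyl-group factor $|W_\zeta|/|W_{\C\zeta}|$ from the degree of $\iota_\zeta$ in Lemma~\ref{fplem}(b) and the identification of Euler classes via Lemma~\ref{fplem}(d).
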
  

\begin{proof} This is an immediate consequence of Kalkman's result for
  circle actions, Theorem \ref{kalk1}, applied to the master space
  constructed in Lemma \ref{master}, using the identification of the
  fixed point components and normal bundles in Lemma \ref{fplem}.
\end{proof} 

\begin{example} \label{classexamples}
\begin{enumerate} 
\item \label{blowupP2} \label{eight} (Blow-up of the projective plane
  as a quotient of a product of projective lines by a circle action)
  Let $X = (\P)^3$ with polarization
\[\LL = \mO_{\P}(a) \boxtimes \mO_{\P}(b) \boxtimes \mO_{\P}(c),
\quad a \ll b \ll c  .\] 
We let $G = \C^\times$ acting on each projective line $\P$ by
\[  g[z_0,z_1] = [z_0,g z_1], \quad g \in \C^\times, z_0,z_ \in \C  .\]
Let $G$ act on $\mO_{\P}(n)$ so that the weights at the fixed points
$[1,0]$, $[0,1]$ are $-n/2,+n/2$.  Let $G$ act diagonally on the
factors in $X = (\P)^3$.  Consider the family of polarizations
$\LL_t = \LL \otimes \C_t$ obtained by shifting $\LL$ by a trivial
line bundle with weight $t$.  

The chamber structure for the various git quotients is governed by the
weights of the action on the polarizing line bundle on the fixed
points, given by $(\pm a \pm b \pm c)/2$.  Thus there are nine
chambers, of which two have empty git quotients and seven \label{seven} non-empty
chambers.  In the first and last chamber, we have
$X \qu_t G \cong P(\C^3)$ resp. $P((\C^3)^\dual)$, while the six
wall-crossings represent three blow-ups and three blow-downs involved
in the Cremona transformation.

  We study the application of the Kalkman formula to the square of the
  first Chern class.  That is, let
\[\alpha = c_1^G(X)^2 \in H^4(X) .\] 
Since $TX$ is isomorphic to the pull-back of $T (X \qu_t G)$ plus a
trivial line bundle with fiber $\g$, we have
\[ \kappa_{X,\pm}^G ( c_1^G(X)) = c_1(X \qu_t G) .\]
Consider the wall-crossing from the chamber $t < -a - b - c$ to the
first non-empty chamber $t \in (a - b - c,-a + b - c)$, corresponding
to the wall-crossing over the ``lowest'' fixed point
$x = ([1,0],[1,0],[1,0]) \in X$. Since all weights of the action on
the tangent bundle at this fixed point $x \in X$ are $1$, we have
\[c_1^G(X) | x = 3\xi \in H_G(\{ x \}) .\] 
Hence
\[ \tau_{X \qu_+ G} \kappa_{X,+}^G(\alpha) - \tau_{X \qu_- G} \kappa_{X,-}^G(\alpha) = \Res_\xi
\frac{(3\xi)^2}{\xi^3} = 9 .\]
Indeed, 
$c_1(\P^2)$ is three times the generator of $H^2(\P^2)$, so
\[ \tau_{X,-}^G \kappa_{X,-}^G(\alpha) = \int_{[\P^2]} c_1(\P^2)^2 = 9 .\]
Consider next the wall-crossing from the chamber with quotient $X
\qu_- G = \P^2$ to the chamber with quotient $X \qu_+ G = \Bl(\P^2)$,
where $\Bl(\P^2)$ is the blow-up of $\P^2$ at a point.  
Letting $\pi: \Bl(\P^2) \to \P^2$ denote the blow-down map we have 
$c_1( \on{Bl} \P^2) = 3 \pi^* H - E $
where $H,E$ are the hyperplane class and class of the exceptional
divisor respectively. Hence 
\begin{eqnarray} \label{square}
\tau_{X \qu_+ G} \kappa_{X,+}^G(\alpha) &=& (3 \pi^* H - E)^2 \\
&=& (3
\pi^* H)^2 - 6 \pi^* H E + E^2 = 9 - 0 -1 = 8 .\end{eqnarray}
To compare this result with the wall-crossing formula, note that the
fixed point set $X^{G,t}$ consists of a unique point
$([0,1],[1,0],[1,0])$ which is semistable exactly for $t = a - b - c$,
with tangent weights $-1,1,1$.  It follows that the first Chern class
squared and Euler classes are
\[c_1( TX | {X^{G,t}} )^2 = (- \xi + \xi + \xi)^2 = \xi^2, \quad
\Eul_G(TX |{X^{G,t}}) = - \xi^3 .\]
Hence the wall-crossing term is 
\[ \Res_\xi \left( \int_{[X^{G,t}]} \iota_{X^{G,t}}^* \alpha \cup
\Eul_G(\nu_{X^{G,t}})^{-1} \right) = \Res_\xi \frac{\xi^2}{-\xi^3} =
-1 .\]
The wall-crossing formula reduces to 
\[ \tau_{X \qu_+ G}(\alpha,\beta) - 
\tau_{X \qu_- G}( \alpha,\beta) =8 - 9 = - 1 . \]  
This matches the well-known fact that each blow-up of $\P^2$ lowers
$c_1^2$ by $1$.
\item \label{twotorus} (Blow-up of the projective plane as a quotient of affine
  four-space by a two-torus) Let us do the same example in a different way, namely as a quotient of an affine space. Let $X = \C^4$ with
  $G = (\C^\times)^2$ acting with weights $(1,0),(1,0),(1,1),(0,1)$.
  Consider the path $\LL_t$ of polarizations whose first Chern classes
  $H^2_G(X,\Q) \cong \Q^2$ are the line segment from $(1,2)$ to
  $(2,1)$.  The chamber structure is determined by the rays generated
  by the weights, so that the ``negative'' chamber is spanned by
  $(0,1),(1,1)$ and the ``positive chamber'' by $(1,1),(1,0)$.  The
  quotient in the negative chamber $X \qu_- G$ is isomorphic to $\P^2$
  via the map
\[ X \qu_- G \to \P^2, \quad [a,b,c,d] = [ (a,b,cd^{-1},1) ] \mapsto
[a,b,cd^{-1}] .\]
On the other hand $X \qu _+ G$ is isomorphic to the blow-up of $\P^2$
with the map to $\P^2$ blowing down the exceptional divisor given by
\[ X \qu_+ G \to \P^2, \quad  [a,b,c,d] \mapsto [a,b,cd^{-1}]  .\] 
As one moves in a line from say $(-1,2)$ to $(2,-1)$ the symplectic
quotients are in order $\emptyset, \P^2, \Bl(\P^2), \emptyset$ where
the initial and final contractions are projective bundles over the
fixed point sets $\on{pt}$ resp. $\P$ for the residual action of
$\C^\times$ on the quotient of $X$ by the diagonal action.  See Figure
\ref{twochamber}, where the two chambers for the possible quotients
are shown together with the moment polytopes of the quotients in each
chamber.

\begin{figure}[ht]
\includegraphics[height=2in]{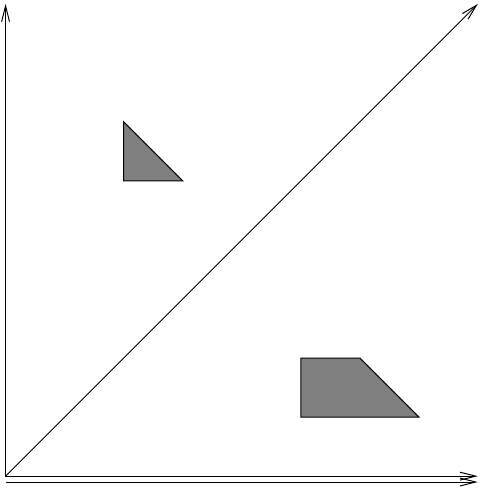}
\caption{Chambers for $(\C^\times)^2$ action on $\C^4$} 
\label{twochamber}
\end{figure}

Consider first the wall-crossing from the empty chamber to the
negative chamber in Figure \ref{twochamber}.  We have
$ c_1^G(X) = 3 \xi_1 + 2\xi_2 $.  Hence the wall-crossing term is
\[ \Res_{\xi = \xi_1}
 \frac{ (3\xi_1 + 2\xi_2)^2}{ \xi_1^2 (\xi_1 + \xi_2)}
|_{\xi_2 = 0 } = 9 \]
hence $\int_{[\P^2]} c_1(\P^2)^2 = 9$ as expected. 

The wall-crossing term for passing from the negative to positive
chamber is the residue:
\[ \Res_{\xi }
\left. \frac{ (3\xi_1 + 2\xi_2)^2}{ \xi_1^2
  \xi_2} \right|_{\xi = \xi_1 = -\xi_2 }  = -1 .\]
We obtain from the wall-crossing formula that 
$\int_{[\Bl(\P^2)]} c_1(\P^2)^2 = 8$
as we already computed in item \eqref{eight} above.  

Consider next wall-crossing from the positive chamber to the empty
chamber.  The fixed point locus contributing to the wall-crossing term
is 
\[X^{\zeta,t}/ (G_\zeta/\C^\times_\zeta) \cong \P \]
since the multiplicity of the weight $(1,0)$ is $2$.  The wall-crossing
term is
\[ \Res_\xi \int_{[\P]} \kappa_{X}^{\C^\times} \frac{ (3\xi_1 +
  2\xi_2)^2}{ \xi_2 (\xi_1 + \xi_2)}  \]
where
$\kappa_{X}^{\C^\times}: H_G(X) \to H_{G/\C^\times}(X \qu \C^\times)$
is the Kirwan map for the quotient by the first factor of $\C^\times$,
so that $X \qu \C^\times$ has $\C^\times$-quotients $X \qu_\pm G$.
Under $\kappa_{X}^{\C^\times}$ we have that $\xi_1$ maps to the 
generator $\omega$ of $H^2(\P)$ while $\xi_2$ maps to the parameter 
$\xi$ for the residual $\C^\times$-action. The wall-crossing term is
therefore
\begin{eqnarray*}
 \Res_\xi \int_{[\P]} \frac{ (3\omega + 2\xi)^2}{ \xi (\omega +
  \xi)} &=& \Res_\xi \int_{[\P]} \frac{ (3\omega + 2\xi)^2}{
  \xi^2} \left( 1 - \frac{\omega}{ \xi} + \frac{\omega^2}{\xi^2} - \ldots \right) \\&=&
  \Res_\xi \int_{[\P]} 12 \frac{\omega}{\xi} - 4 \frac{\omega}{\xi} = -8 . \end{eqnarray*}
Thus as expected the wall-crossing formula reduces to  
$\int_{\emptyset} c_1(\emptyset) = 8 - 8 = 0 .$
\item (Resolution of a crepant singularity) The following example
  illustrates the application of the wall-crossing to orbifolds, in
  which the integrals become rational, and also the non-compact case,
  in which the integrals must be defined via localization.  Suppose
  that $X = \C^{k+1}$ and $G = \C^\times$ acts with weights $1$ with
  multiplicity $k$ and $-k$ with multiplicity $1$.  Take $\LL$ to be
  the trivial polarization, and $\LL_t$ the family obtained by
  shifting by a trivial line bundle with weight $t$.  Thus invariant
  sections are spanned by monomials.
\[ z_0^{d_0} z_1^{d_1} \ldots z_k^{d_k}, \quad -k d_0 + d_1 + \ldots +
d_k =  t .\]
The latter requires $d_0 \neq 0$ for $t < 0$ resp.
$(d_1,\ldots, d_k) \neq 0$ for $t > 0$.  It follows that the
semistable locus for $t< 0$ is $z_0 \neq 0$ and for $t > 0$ the locus
where $(z_1,\ldots, z_k) \neq 0$. The git quotients are then
$X \qu_+ G = \C^k/\Z_k$ (by our conventions, a stack with trivial
canonical bundle) while $X \qu_- G$ is isomorphic to the total space
of $\mO_{\P^{k-1}}(k) \to \P^{k-1}$.

  We apply the wall-crossing formula to the Euler class of the
  quotients.   Let $\alpha = c_k^G( \C^{k+1})$
  (the next-to-highest Chern class of $\C^{k+1}$) so that
  $\kappa_{X,\pm}^G(\alpha)$ is the top Chern class of $X \qu_\pm G$.
  Then
\[ \tau_{X \qu_+ G}(\kappa_{X,+}^G(\alpha)) = \int_{[\C^k/\Z_k]} \Eul(\C^k/\Z_k) = 1/k .\]
Indeed, interpreting this integral via localization using the
$\C^\times$-action given by scalar multiplication, there is a unique
fixed point with stabilizer of order $k$ which contributes $
\Eul(\C^k)/ (k\Eul(\C^k) ) = 1/k$ to the integral.  On the other side of
the wall,
\[ \tau_{X \qu_- G}(\kappa_{X,-}^G(\alpha)) = \int_{[\mO_{\P^{k-1}}(k) \to
      \P^{k-1}]} \Eul(\P^{k-1}) = k .\]
The wall-crossing term is 
\[ \Res_\xi \frac{ 
 (1 + \xi)^k(1 - k\xi)}{\xi^k (-k \xi)} = k - 1/k \]
as expected. 
\item {\rm (A del Pezzo surface as a quotient of a product of
    projective lines by a non-abelian group) } \label{sl2} Let
  $X = (\P)^{\times 5}$ with the diagonal action of $G =SL(2,\C)$. The
  action of $SL(2,\C)$ on $\P$ lifts uniquely to an action on the
  hyperplane bundle $\mO(1)$ and we take as polarization the exterior
  tensor product $\mO(1)^t \boxtimes \mO(1)^{\boxtimes 4}$ varying on
  the first factor only. The stability condition for a tuple
  $(v_1,\ldots,v_5) \in X$ is
\[\sum_{v_i \neq w} \mu_i v_i \leq \sum_{v_i =w}\mu_i v_i, \quad 
\mu_1= t, \mu_i= 1,i\neq 1, \quad \forall w \in \P \] 
The values of $t$ for which there are semistable points with infinite
stabilizers are $t = 2,4$; these points are given by 
\[v_1=v_i \neq v_j= v_k= v_l, \quad v_1 \neq v_2= v_3 = v_4= v_5.\]  
Thus the chamber structure for $t$ is $(0,2),(2,4),(4,\infty)$. Since
the quotient in the last chamber is empty, and all the weights are
one, the quotient $X \qu_t G$ for the second chamber must be $\P^2$.
Passing from the second to first chamber the quotient $X \qu_t G$
undergoes a blow-up at $4$ points.  Consider the wall-crossing given
by passing between chambers so that
\[ X \qu_- G = \P^2, \quad X \qu_+ G = \Bl^4(\P^2) . \]  
We compute the square of the first Chern class by wall-crossing: Let
\[\alpha =
  c_1^G(X)^2, \quad \text{so that} \quad \kappa_{X,\pm}^G(\alpha) = c_1(X \qu_\pm
  G)^2 .\]
  The fixed points of $\C^\times$ action for the singular value
  $t = 2$ are the $4$ (up to the action of the disconnected group
  $G_\zeta= N(T)$) configurations with $v_1= v_i \neq v_j= v_k= v_l$.
  Consider the case that $x = (x_1,\ldots,x_5) \in X$ is fixed by the
  maximal torus action; the tangent bundle of this point has weights
  $1,1,1,-1,-1$ hence $c_1^G(X) |_x = \xi$.  The tangent bundle
  $T_x X$ modulo $\g/\g_\zeta$ has weights $1,1,-1$.  Thus
\begin{eqnarray*}
 \int_{[X \qu_+ G]} c_1(X \qu_+ G)^2 &=& \int_{[X \qu_- G]} c_1(X \qu_-
G)^2 + 4 \Resid_\xi \frac{\xi^2}{-\xi^3} \\
&=& 9 -4 = 5 .\end{eqnarray*}
This again matches the fact that each blow-up of $\P^2$ lowers $c_1^2$
by $1$. \end{enumerate} \end{example}

\subsection{The virtual wall-crossing formula} 
\label{vsec} 

Our main result will be derived from a virtual extension of
wall-crossing formulas, similar to Kiem-Li \cite{kl:vwc} using the
virtual localization formula of Graber-Pandharipande \cite{gr:loc}; we
assume that the reader is familiar with the concepts of equivariant
perfect obstruction theories etc.  from those papers.  In this section
we explain the virtual extension and give an application to a simple
complete intersection.

We first need a simple lemma on obstruction theories on quotients. Let
$\XX$ be a Deligne-Mumford $G$-stack equipped with a $G$-equivariant
perfect obstruction theory which admits a global resolution by vector
bundles. This means that $\XX$ is equipped with an object $E$ of the
bounded derived category of $G$-equivariant coherent sheaves, together
with a morphism from $E$ to the cotangent complex, satisfying certain
axioms, see \cite{bf:in}, \cite{gr:loc}; a typical example is an
invariant complete intersection as in Example \ref{nodal} below. Let
$\ul{\g}^\dual$ denote the sheaf of sections of the trivial bundle
with fiber $\g^\dual$. \label{csheaf} \label{sheaf} The $G$-action on
$\XX$ induces a canonical morphism
$a^\dual:L_{\XX} \to \ul{\g}^\dual $ that we call the {\em
  infinitesimal action}.  The obstruction complex $E$ comes equipped
with a lift $\ti{a}^\dual: E \to \ul{\g}^\dual$ of the infinitesimal
action $a^\dual: L_{\XX} \to \ul{\g}^\dual$.  If $\XX^{\ss}$ is the
semistable locus for some polarization and stable=semistable, denote
$E^{\ss}, L_{\XX^{\ss}}$ etc.  the restrictions to the semistable
locus.  The following lemma is probably well-known:

\begin{lemma} \label{qot}If $\XX^{\ss}$ is the semistable locus for some
polarization and stable=semistable, then the perfect obstruction
theory $E^{\ss} \to L_{\XX^{\ss}} := L_{\XX} | \XX^{\ss} $ descends to
a perfect obstruction theory on the quotient $\XX^{\ss} / G$.
\end{lemma}

\begin{proof}  
 From the fibration $\pi: \XX^{\ss} \to \XX^{\ss}/G$ one obtains an exact
 triangle of cotangent complexes
 \begin{equation} \label{etg} \pi^* L_{\XX^{\ss}/ G} \to L_{\XX^{\ss}}
   \to \ul{\g}^\dual\to (L_{\XX^{\ss}/G})[1] .\end{equation}
Let $\Cone(\ti{a}^\dual)$ denote the mapping cone on the lift of the
infinitesimal action $\ti{a}^\dual$.  The exact triangle
\[ \Cone(\ti{a}^\dual)
\to E \to \ul{\g}^\dual \to \Cone(\ti{a}^\dual)[1]\]
admits a morphism to \eqref{etg}, in particular making
$\Cone(\ti{a}^\dual) \to L_{\XX^{\ss}/G}$ into an obstruction theory
with support in $[-1,1]$.  By the assumption on the stabilizers, this
obstruction theory is perfect.
\end{proof} 

We now study the virtual normal complexes of the fixed point stacks.
If $\zeta \in \g$ \label{oneps} an element generating a one-parameter subgroup then
the fixed point stacks $\XX^{\zeta}$ also have equivariant perfect
obstruction theories compatible with that on $\XX$. We choose a
splitting of Lie algebras
\[  \g_\zeta \cong \C\zeta \oplus (\g_\zeta/ \C\zeta) \] 
inducing a splitting of Lie groups after passing to a finite cover
$\ti{G}_\zeta \to G_\zeta$
\[ \ti{G}_\zeta \cong G_\zeta/\C_\zeta^\times \times \C^\times_\zeta
.\]
Denote by $\nu_{\XX^\zeta}$ \label{nuzeta} the conormal complex for the embedding
\[ \XX^\zeta/(G_\zeta/\C^\times_\zeta) \to
\XX/(G_\zeta/\C^\times_\zeta) .\]
Denote by $\XX^{\zeta,t}$ the locus of $\XX^\zeta$ semistable for
$\LL_t$ and by $\nu_{\XX^{\zeta,t}}$ the restriction of
$\nu_{\XX^{\zeta}}$ to $\XX^{\zeta,t}$. The Euler class
$ \Eul_{\C^\times_\zeta}(\nu_{\XX^{\zeta,t}})$ is well-defined in the
equivariant cohomology $H(\XX^{\zeta,t})[\xi,\xi^{-1}]$, after
inverting the equivariant parameter $\xi$. From the virtual
localization formula in Chang-Kiem-Li \cite[Theorem 3.4]{ckl:loc},
improving Graber-Pandharipande \cite{gr:loc}, one obtains the
following virtual version of the wall-crossing Theorem \ref{kalk2},
similar to results of Kiem-Li \cite{kl:vwc}:

\begin{theorem} [Virtual Kalkman wall-crossing]
  \label{vkalkman}
  Let $\XX$ be a proper Deligne-Mumford $G$-stack equipped with a
  $G$-equivariant perfect obstruction theory which admits a global
  resolution by vector bundles. Let $\LL_\pm \to \XX$ be $G$-line
  bundles that are ample for the coarse moduli spaces, so that
  stable=semistable for $\P(\LL_- \oplus \LL_+)$. Let $\tau_{\XX
  \qu_\pm G}$ resp. $\tau_{\XX,\zeta,t}$ denote integration
resp. equivariant integration over $\XX \qu_\pm G$
resp. $\XX^{\zeta,t}$ times
$\Eul_{\C^\times_\zeta}(\nu_{\XX^{\zeta,t}})^{-1}$.  Then
\begin{equation} 
\tau_{\XX \qu_+ G} \ \kappa_{\XX,+}^G - \tau_{\XX \qu_- G} \
\kappa_{\XX,-}^G = 
\sum_{t \in (-1,1),[\zeta]}
\Resid_\xi \tau_{\XX,\zeta,t} \end{equation}
where the sum is over $[\zeta]$ from \eqref{equiv}. 
\end{theorem}  

\begin{example} \label{nodal} {\rm (Wall-crossing over a nodal fixed
    point)} The following simple example may help illustrate the
  notation. Suppose that $X = \P^1 \cup_{\infty \sim \infty} \P^1$ is
  a nodal projective line with a single node $\infty$, equipped with
  the standard $\C^\times$-action on each component, so that the
  weights of the action on the tangent spaces at the node $\infty$ are
  $\pm 1$.  We equip $X$ with a polarization so that the weights are
  $\pm 1$ at the smooth fixed points $0 \in \P^1$, and $0$ at the
  nodal point. Then $X \qu_t G$ is a point for $t \in (-1,1)$, and is
  singular for $t=0$. Since $X$ is a complete intersection, $X$ has
  a perfect obstruction theory \cite[Example before Remark 5.4]{bf:in}
  and the virtual wall-crossing formula of Theorem \ref{vkalkman}
  applies. We examine the wall-crossing for the trivial class
  $\alpha = 1$ at the singular value $ t= 0$. The virtual normal
  complex at the nodal point is the quotient of $\C_1 \oplus \C_{-1}$,
  the sum of one-dimensional representations with weights $1,-1$,
  modulo their tensor product $\C_1 \otimes \C_{-1}$, which has weight
  zero. Hence the normal complex has inverted Euler class
\[ \Eul_G( \nu_{X,G,t})^{-1} = \frac{0 \xi}{ \xi(-\xi)} = 0 .\]
The integrals on the left and right hand sides are $1$ (being the
integrals over points) while the wall-crossing term is
\begin{eqnarray*} 
1 - 1 &=& \tau_{\XX \qu_+ G} \ \kappa_{\XX,+}^G - \tau_{\XX \qu_- G}
\ \kappa_{\XX,-}^G \\ &=& \Resid_\xi \tau_{\XX,\zeta,0}^{G} =
\Resid_\xi \int_{[ \on{pt}]} \Eul_G( \nu_{X,G,t})^{-1} = \Resid_\xi 0
= 0 \end{eqnarray*}
as desired.
\end{example} 

We begin the proof of Theorem \ref{vkalkman} by construction of a
master space.

\begin{lemma} \label{vmaster} Let $\XX$
  be a Deligne-Mumford $G$-stack equipped with a $G$-equivariant
  perfect obstruction theory which admits a global resolution by
  vector bundles as well as an embedding in a smooth Deligne-Mumford
  $G$-stack.  Let $\LL_\pm \to \XX$ be polarizations ($G$-line bundles
  with ample coarse moduli spaces) such that stable=semistable for
  $\LL_\pm$ and for any $t \in (-1,1)$ and any $t$-semistable point $x
  \in \XX^G$, $G_x$ acts with finite stabilizer on the fiber
  $(\LL_+\otimes \LL_-^{-1})_x$.  There exists a proper
  Deligne-Mumford $\C^\times$-stack $\ti{\XX}$ equipped with a line bundle
  ample for the coarse moduli space whose git quotients $ \ti{\XX}
  \qu_t \C^\times$ are isomorphic to those $ \XX \qu_t G$ of $\XX$ by the
  action of $G$ with respect to the polarization $\LL_t$ and whose fixed
  point set $\ti{\XX}^{\C^\times}$ is given by the union
\[\ti{\XX}^{\C^\times} = (\XX \qu_- G) \cup (\XX \qu_+ G) \cup
\bigcup_{[\zeta]} \left( \iota_\zeta( \XX^\zeta \qu_t
  (G_\zeta/\C^\times_\zeta)) \right)\]
where $[\zeta]$ is as in \eqref{equiv} and $\iota_\zeta$ is the
natural map to $\ti{X}$ as in Lemma \ref{fplem}. Furthermore,
$\ti{\XX}$ has a perfect obstruction theory admitting a global
resolution by vector bundles with the property that the virtual normal
complex of $\XX^\zeta \qu_t (G_\zeta/\C^\times_\zeta)$ is isomorphic
to the image of $\nu_{\XX^\zeta} / (\g/\g_\zeta)$ under the quotient
map $\XX^\zeta \to \XX^\zeta \qu (G_\zeta / \C^\times_\zeta)$, by an
isomorphism that intertwines the action of $\C^\times_\zeta$ on
$(\nu_{\XX^\zeta} / (\g/\g_\zeta) ) \qu (G_\zeta / \C^\times_\zeta)$
with the action of $\C^\times$ on $\nu_{\ti{\XX}^{\C^\times}}$.
\end{lemma}

\begin{proof} The construction of the master space is the same as in
  \ref{master}, that is, the master space is the stack-theoretic
  quotient $ \ti{\XX}=\P(\LL_- \oplus \LL_+) \qu G .$ The assumption
  on the action of the
  stabilizers implies that the action of $G$ on the semistable locus
  in $\P(\LL_- \oplus \LL_+)$ is locally free, so that
  stable=semistable for $\P(\LL_- \oplus \LL_+)$. It follows that
  $\ti{\XX}$ is a proper Deligne-Mumford stack, and by Lemma \ref{qot}
  has a perfect obstruction theory induced from the natural
  obstruction theory on
  $\P(\LL_- \oplus
  \LL_+)$
  given by considering it as a bundle over $\XX$. The quotient
  $\ti{\XX}$ contains
  the quotients of $\P(\LL_\pm) \cong \XX$ with respect to the
  polarizations $\LL_\pm$, that is, $\XX \qu_\pm G$. 

  The same argument
  in Lemma \ref{fplem} describes the fixed point loci: they correspond
  to fixed point loci in $\P(\LL_- \oplus \LL_+)$ for one-parameter
  subgroups of $\C^\times \times G$. Given such a locus
  $\P(\LL_-\oplus \LL_+)^{\xi -\zeta}$, the pull-back of the virtual normal complex is by
definition the moving part of $\Cone(\ti{a}_{\P(\LL_-\oplus
  \LL_+)}^\dual)$, where 
\[\ti{a}^\dual_{\P(\LL_-\oplus \LL_+)}: E_{\P(\LL_-
  \oplus \LL_+)} \to \ul{\g}^\dual_\zeta\]
is the lift of the infinitesimal action of $G_\zeta$.  Consider the
fibration $\pi: \P(\LL_- \oplus \LL_+) \to \XX$.  By definition $E_{\P(\LL_-
  \oplus \LL_+)}$ fits into an exact triangle
\[   E_\XX \to E_{\P(\LL_- \oplus \LL_+)} \to L_\pi \to E_\XX[1] .\]
Over the complement
$\P(\LL_- \oplus \LL_+)^\times \subset \P(\LL_- \oplus \LL_+)$ of the
sections at zero and infinity we may identify $L_\pi \cong \ul{\C}$
using the $\C^\times$-action on the fibers, by the assumption on the
weights of the $\C_\zeta$ action on the fiber. The projection to $\XX$
identifies the mapping cones
\[\Cone(\ti{a}^\dual_{\P(\LL_-\oplus \LL_+)} |_{\P(\LL_- \oplus \LL_+)^{\xi - \zeta}}) \to \pi^*
  \Cone(\ti{a}^\dual_{\XX^\zeta})\]
where 
\[\ti{a}^\dual_{\XX^\zeta}: E_{\XX} | \XX^\zeta \to (
\ul{\g_\zeta/\C\zeta} )^\dual\]
is the lift of the infinitesimal action of $\g_\zeta/\C\zeta$.  Now
the virtual normal complex is by definition the $\C^\times$-moving part
of the perfect obstruction theory; the Lemma follows.
\end{proof} 

\begin{proof}[Proof of Theorem \ref{vkalkman}] The proof of
  \ref{vkalkman} is similar to that of Theorem \ref{kalk2}. Namely we
  take the residue of the virtual localization formula applied to
  $\ti{X}$: For
  any equivariant class $\alpha \in H_G(\XX)$ of top degree, its
  pullback to $\P(\LL_- \oplus \LL_+)$ descends to a class $\ti{\alpha}
  \in H_{G}(\ti{\XX})$ whose restriction to $\XX \qu_\pm G$ is
  $\kappa_{X,\pm}^G(\alpha)$, and whose pullback under $X^{\zeta,t}
  \to \ti{X}^{\C^\times}$ is $\iota_{X^{\zeta,t}} \alpha$. By virtual
  localization the integral is
\[ \int_{[\ti{\XX}]} \alpha = \int_{[\XX \qu_- G]}
  \frac{\kappa_{\XX,-}^G(\alpha)}{\Eul_G(\nu_-)} + \int_{[\XX \qu_+
      G]} \frac{\kappa_{\XX,+}^G(\alpha)}{\Eul_G(\nu_+)} + \sum_{t \in
      (-1,1), [\zeta]} \int_{[\XX^{\zeta,t}/(G_\zeta/\C^\times)]}
    \frac{\iota^*_{{\XX}^{\zeta,t}}
    \alpha}{\Eul_{G_\zeta}(\nu_{\XX^{\zeta,t}})} .\]
Taking residues and using Lemma \ref{vmaster} to identify the last
term with
\[ 
\sum_{t \in (-1,1)} \Resid_\xi \int_{[\ti{\XX}^{G,t}]} \frac{\iota^*_{\ti{\XX}^{G,t}}
  \alpha}{\Eul_G(\nu_{\ti{\XX}^{G,t}})} = \sum_{t \in
  (-1,1),[\zeta]} \Resid_\xi \tau_{\XX,\zeta,t} \]
gives the formula in the Theorem.
\end{proof}

\section{Wall-crossing for Gromov-Witten invariants} 

In this section, we prove a quantum generalization of Kalkman's
wall-crossing formula Theorem \ref{kalk2}.  In the first two
subsections, we define the wall-crossing terms as integrals over
moduli spaces of gauged maps fixed by a central subgroup.  The last
two subsections contain a construction of a master space for moduli
spaces of gauged maps, and a proof of the wall-crossing formula via
virtual localization on the master space.  The construction of the
master space is obtained from one for a different compactification of
gauged maps introduced by Schmitt \cite{schmitt:git}, pulled back
under a relative version of Givental's morphism from stable maps to
the quot scheme.  Schmitt's compactification has the advantage that it
is constructed by git methods so the classical techniques apply.

\subsection{Construction of a master space}
\label{master2}

The proof of the wall-crossing formula \ref{gwall} depends on the
construction of {\em master space} in the sense of \ref{master} whose
quotients are the moduli spaces of Mundet stable gauged maps.

\begin{proposition} [Existence of a master space]  
\label{masterprop}
Under suitable stable=semistable conditions, there exists a proper
Deligne-Mumford $\C^\times$-stack $\ol{\M}_n^G(C,X,\LL_-,\LL_+)$ with
the following properties:
\begin{enumerate}
\item \label{eqobs} 
$\ol{\M}_n^G(C,X,\LL_-,\LL_+)$ admits a perfect
  $\C^\times$-equivariant relative
  obstruction theory
\item \label{gitquot} the git quotients of $\ol{\M}_n^G(C,X,\LL_-,\LL_+)$ are the
  moduli stacks
  \[ \ol{\M}_n^G(C,X,\LL_-,\LL_+) \qu_t \C^\times \cong
  \ol{\M}_n^G(C,X, \LL_t) \]
  for parameter $t \in (-1,1)$;
\item \label{inc} 
the $\C^\times$-fixed substack includes
  $\ol{\M}_n^G(C,X,\LL_-,d)$ and $\ol{\M}_n^G(C,X,\LL_+,d)$; 
\item  \label{embeds} $\ol{\M}_n^G(C,X,\LL_-,\LL_+,d)$ admits an embedding in a
  non-singular Deligne-Mumford stack.
\end{enumerate} 
\end{proposition} 

The proof will be given after several constructions.  First recall the
quot-scheme compactification of Mundet semistable morphisms from $C$
to $X/G$ constructed by Schmitt \cite[Theorem 2.7.1.4]{schmitt:git}:

\begin{definition} \label{bmaps}
 {\rm (Bundles with maps)} 
\begin{enumerate}
\item Let $X = \P^{r-1}$ and $G= GL(r)$. A {\em projective bundle with
    map} over a smooth projective curve $C$ over a point
  $S =\{ \on{pt} \}$ is a datum $(E,L,\varphi)$ consisting of a vector
  bundle $E \to C$ of rank $r$; a line bundle $L \to C$; and a
  surjective morphism $\varphi: E \to L$; to obtain a compactification
  one allows this morphism to be non-zero rather than surjective.  On
  the locus $\varphi \neq 0$ we obtain a section of the associated
  projective bundle $\Fr(E^\dual) \times_G X = \P(E^\dual)$,
\begin{equation} \label{defines} \{ \varphi \neq 0 \} \to \Fr(E^\dual)
  \times_G X, \quad z \mapsto \on{im} \varphi^\dual_z .\end{equation}
For more general schemes $S$, a projective bundle with map over a
curve $C \times S \to S$ of degree $l$ is a datum $(E,L,\tau,\varphi)$
where $\tau$ is a morphism from $S$ to the Jacobian $\Jac^l(C)$ of
degree $l$ line bundles, $N(\tau)$ the corresponding line bundle on
$C$ defined by pulling back a Poincar\'e bundle over
$C \times \Jac^l(C)$, and $\varphi: E \to N(\tau) \otimes L$ is a
non-zero map.  For $G$ a product of groups $GL(r_i), k =i,\ldots, k$,
a projective bundle with map is collection of bundles $E_i$ of rank
$r_i$, a line bundle $L \to C$ and a surjective morphism
$\varphi: \oplus_{i=1}^k E_i \to L$.
\item For a reductive subgroup $G$ of a product
  $GL(\ul{r}) = \times_{i=1}^k GL(r_i)$ let $\rho: G \to GL(\ul{r})$
  be a faithful homogeneous representation, that is, so that the
  central subgroup of $G$ maps to the center $(\C^\times)^k$ of
  $GL(\ul{r})$.
  A {\em projective bundle} with map is a projective 
  $GL(\ul{r})$-bundle with map $(E,L,\varphi)$ and a reduction 
  $\sigma: \Fr(E) \to \Fr(E)/G$ of the frame bundle to $G$.
\item A $G$-bundle with map $(E,L,\varphi,\sigma)$ is {\em Mundet
    semistable} if it satisfies the inequality of Definition
  \ref{mundetsemistable} for every pair $(\sigma,\lambda)$ consisting
  of a parabolic reduction $\sigma$ and antidominant coweight
  $\lambda$.
\end{enumerate} 
\end{definition}

We introduce the following notation the moduli stacks of bundles with
maps with given numerical invariants.

\begin{definition} \label{noresp}  Given a $G$-module $V$ and
  integers $d_E,d_L$ let $\M^{G,\quot}(C,V,d_E,d_L)$ denote the stack
  of Mundet semistable $G$-bundles with maps
  \[ (P, L \subset E:= P(V)^\dual, \varphi :E \to L
  ) \] 
  whose bundles $(E,L)$ have first Chern classes 
\[ c_1(E) = d_E, \quad c_1(L) = d_L \in H_2(C) \cong \Z .\]
\end{definition}

Schmitt \cite[Theorem 2.7.1.4]{schmitt:git} proves using a git
construction that $\ol{\M}^{G,\quot}(C,V,d_E,d_L)$ has projective
coarse moduli space.  We will need some details regarding the git
construction which involves {\em level structures}, defined as
follows.

\begin{definition} \label{levstr} Let $n \ge 1$ be an
  integer. An \label{an} {\em $n$-twisted level structure} for a
  projective bundle with map $(E,L,\varphi)$ is a collection of
  sections $s_1,\ldots,s_l$ generating $E$: a surjective map
  \[ s: \mO_C^{\oplus l}(-n) \to E .\]
  An isomorphism between two projective bundles with maps and level
  structures $(E^k,L^k,\varphi^k,s^k), k \in \{ 1,2 \}$ is a pair of
  isomorphisms $E^1 \to E^2, L^1 \to L^2$ intertwining the maps
  $\varphi_k$ and level structures $s_k$.  In the case of
  $G \subset GL(\ul{r})$ bundles, a level structure is a level
  structure for each factor $E_i \subset E, i = 1,\ldots, k$.
\end{definition}

Denote by $\ol{\M}^{G,\quot,\lev}(C,V,d_E,d_L)$ the compactified stack
of projective bundles with maps and level structures. \label{levels}
The group $GL(\ul{r})$ acts on
$\ol{\M}^{G,\quot,\lev}(C,\P^{r-1},d_E,d_L)$ by changing the level
structure:
\[ g ( E, L, \varphi, s) \mapsto (E,L,\varphi , gs) .\]
Schmitt \cite[Section 2.7]{schmitt:git} constructs a line bundle
$D(\LL)$ ( the pull-back of an ample line bundle on a suitable quot
scheme by a finite morphism) such that the quotient of the inverse
image of the semistable locus in
$\ol{\M}^{G,\quot,\lev}(C,\P^{r-1},d_E,d_L)$ by $GL(\ul{r})$ is
$\ol{\M}^{G,\quot}(C,\P^{r-1},d_E,d_L)$. (The notation stands roughly
speaking for determinant line bundle.)

A well-known construction of Givental \cite{gi:eq} provides a morphism
from the Kontsevich-style compactification to the quot-scheme
compactification.  Let $X$ be a smooth projective $G$-variety.  Given
$d \in H_2^G(X,\Z)$ let $d_E \in \Z$ denote the image of $d$ under
\[ H_2^G(X,\Z) \to H(BG,\Z) \to H(BGL(r),\Z) \cong \Z \] 
and $d_L \in \Z$ the image of $d$ under 
\[ H_2^G(X,\Z) \cong H_2(BG) \oplus H_2(X) \cong \Z . \]

\begin{lemma}  There is a proper morphism of Artin stacks
  \begin{equation} \label{givmorph} 
\ol{\M}^G(C,X,\LL,d) \to 
    \ol{\M}^{G,\quot}(C,\P^{r-1},\LL,d_E,d_L) \end{equation}
  which maps 
  \[ u: \hat{C} \to E: = \P(V^\dual) \]
  with principal component $u_0: C \to \P(V^\dual)$ to the pair
  $(L,\varphi)$ where the line bundle $L$ is defined by
  \begin{equation} \label{some} L := (u_0)^* E \otimes
    \bigotimes_{i=1}^k \mO( - d_i p_i) \end{equation}
  where $d_i$ is the degree of the $i$-th bubble component
\[ u|C_i: C_i \to \P(E^\dual), i =1 ,\ldots ,k  \]  
and $p_i \in C$ is its projection $\pi( u(C_i))$ onto the principal
component $C$; and if $\varphi_0$ is the quotient corresponding to
$u_0$ then $\varphi$ is defined by tensoring with a section of
$\mO( - d_i p_i)$, so that in a local coordinate $z$
\begin{equation} \label{somes} \varphi(z) := \varphi_0(z) (z -
  p_i)^{-d_i} .\end{equation}
\end{lemma}

\begin{proof} In the setting of families of stable maps this is an
  application of Popa-Roth \cite[Theorem 7.1]{po:stable}, see also
  Marian-Oprea-Pandharipande \cite[Section 5.2]{marian} for a similar
  construction.  The stack $\ol{\M}^{G,\lev}(C,X,\LL,d)$ \label{nopol}
  admits a universal bundle
\[E \to C \times \ol{\M}^{G,\lev}(C,X,\LL,d) .\] 
Letting 
\[ \ol{\cC}^{G,\lev}(C,X,\LL,d) \to \ol{\M}^{G,\lev}(C,X,\LL,d) \] 
denote the universal curve, we have a universal map \label{univmap}
\[ \ol{\cC}^{G,\lev}(C,X,\LL,d) \to \P(E), \quad ( u: \hat{C} \to C
\times X/G , z \in \hat{C},s) \mapsto u(z)
 \]
 where $s$ denotes the level structure from above.  The morphism in
 \cite[Theorem 7.1]{po:stable} maps this datum to the morphism
 $E^\dual \to L$ for the line bundle
 $L \to C \times \ol{\M}^{G,\lev}(C,X,\LL,d)$ defined by \eqref{some};
 at least locally.  Since the construction in \cite[Theorem
 7.1]{po:stable} is functorial, the local constructions patch together
 to the required morphism, even though $ \ol{\M}^{G,\lev}(C,X,\LL,d)$
 is {\em a priori} a stack of possibly infinite type.  The map
 \eqref{somes} giving a projective bundle with map and level structure
 in the sense of Schmitt \cite[Section 2.7]{schmitt:git}.  Taking the
 quotient by the action of $GL(\ul{r})$ gives the desired morphism
 $\ol{\M}^G(C,X,\LL,d) \to \ol{\M}^{G,\quot}(C,\P^{r-1},\LL,d_E,d_L)$.
\end{proof} 

\begin{corollary}
$\ol{\M}^G_n(C,X,d)$ is proper for each $d \in H_2^G(X,\Z)$.
\end{corollary} 

\begin{proof} Forgetting the markings and stabilizing the stable
  section defines a morphism
\[ \ol{\M}^G_n(C,X,d) \to \ol{\M}^G(C,X,d); \]
this is the composition of morphisms forgetting a single marking, each 
of which is isomorphic to a universal marked curve.  Combining these 
we obtain a proper morphism 
\[ \ol{\M}^G_n(C,X,d) \to \ol{\M}^{G,\quot}(C,\P^{r-1},d_E,d_L) .\]
Since the composition of proper morphisms is proper,
$\ol{\M}^G_n(C,X,d)$ is proper.
\end{proof}

A master space for stable gauged maps is constructed by considering
bundles with {\em pairs} of maps.  Associated to each map is a
determinant line bundle, and a repeat of the construction in Thaddeus
\cite{th:fl} will create a master space for the variation of stability
condition.  First we introduce a suitable moduli space of bundles with
pairs of maps.

\begin{definition} { \rm (Stack of bundles with pairs of maps)}
\begin{enumerate}
\item Suppose $\LL_\pm \to X$ are polarizations. Given tuples
  $\ul{r}_-,\ul{r}_+ > 0$ a class $d_G \in H^2(BG)$ and integers
  $ d_{\ul{L}} = (d_{L_-},d_{L_+}) $ and $G$-modules $V_-,V_+$ a {\em
    bundle with pair} is a tuple
\[   \Set{  (P, L_-,L_+, \varphi_-,\varphi_+) | \left( \begin{array}{l}  \varphi_- : P(V_-^\dual) =:
E_- \to L_- \\  \varphi_+ : P(V_+^\dual) =: E_+ \to L_+ \end{array} \right)   } \] 
consisting of a $G$-bundle $P$ with first Chern class $d_G$, line
bundles $L_-,L_+$ of degrees $d_{\ul{L}}$ and non-zero maps
$\varphi_-,\varphi_+$ from the associated vector bundles
$E_\pm:= P(V_\pm^\dual)$.
\item A stability condition on bundles with pairs is given by
  combining the Ramanathan and Hilbert-Mumford weights in
  \eqref{hweight}, \eqref{rweight}: For weights $\rho_-,\rho_+ > 0$ a
  parabolic reduction $\sigma$ and Lie algebra element $\lambda$
  generating a one-parameter subgroup we define
\[ \mu_{\rho_-,\rho_+}(\sigma,\lambda)=  \mu_R(\sigma,\lambda)+ \rho_-
\mu_{H,-}(\sigma,\lambda) + \rho_+ \mu_{H,+}(\sigma,\lambda)  \]
where $\mu_{H,\pm}(\sigma,\lambda)$ is the weight of the one-parameter
subgroup on the associated graded for the map $\varphi_\pm$.  A datum
$(P,L_-,L_+,\varphi_-,\varphi_+)$ is {\em semistable} iff
\[ \mu_{\rho_-,\rho_+} \leq 0 \quad \forall (\sigma,\lambda) .\]   
Let $\ol{\M}^{G,\quot}(C,V_-,V_+,d_G,d_{\ul{L}})$
denote the moduli stack of $(\rho_-,\rho_+)$-semistable data
$(P,L_-,L_+,\varphi_-,\varphi_+)$.
\end{enumerate} 
\end{definition} 

A very similar construction appears in \cite[Section
2.8.1]{schmitt:git} under the name of {\em twisted affine bumps}, but
with a different stability condition.

\begin{lemma} For sufficiently large twisting in Definition
  \ref{levstr}, there exists a projective
  $GL(\ul{r}_-) \times GL(\ul{r}_+)$-scheme
  $\ol{\M}^{G,\quot,\lev}(C,V_-,V_+,d_G,d_{\ul{L}})$ such that for any
  $\rho_+, \rho_-$, the stack
  $\ol{\M}^{G,\quot} (C,V_-,V_+,d_G,d_{\ul{L}})$ has
  coarse moduli space that is the good quotient of an open subset of
  semistable points $\ol{\M}^{G,\quot,\lev}(C,V_-,V_+,d_G,d_{\ul{L}})$
  \[ \ol{\M}^{G,\quot}(C,V_-,V_+,d_G,d_{\ul{L}}) =
  \ol{\M}^{G,\quot,\lev}(C,V_-,V_+,d_G, d_{\ul{L}})^{\ss} /
  GL(\ul{r}_-) \times GL(\ul{r}_+) . \]
Furthermore, the semistable locus is a git semistable locus 
in the sense that there exists
a
 finite injective equivariant morphism  
 \[ \ol{\M}^{G,\quot,\lev}(C,V_-,V_+,d_G,d_{\ul{L}}) \to
 \ol{\QQ}^{G,\lev}(C,V_-,V_+,d_G,d_{\ul{L}}) \]
 to a $GL(\ul{r}_-) \times GL(\ul{r}_+)$-scheme
 $ \ol{\QQ}^{G,\lev}(C,V_-,V_+,d_G,d_{\ul{L}})$ and a line bundle
\[ D(\LL_-,\LL_+) \to \ol{\QQ}^{G,\lev}(C,V_-,V_+,d_G,d_{\ul{L}}) \]
so that the following holds: A bundle with pair
$(P,L_-,L_+,\varphi_-,\varphi_+)$ is semistable iff its image in
$\ol{\QQ}^{G,\lev}(C,V_-,V_+,d_G,d_{\ul{L}}) $ is semistable, that is,
there exists a non-trivial invariant section of $D(\LL_-,\LL_+)$
non-vanishing at $(P,L_-,L_+,\varphi_-,\varphi_+)$.
\end{lemma} 

\begin{proof} We will embed the given moduli stack into a larger
  moduli stack via a tensor product construction.  Let
  $\ol{\M}^{G,\quot,\lev}(C,V_-,V_+,d_{\ul{L}})$ denote the moduli
  stack of objects that are tuples
  $(P, \varphi_-: E_- \to L_-, \varphi_+: E_+ \to L_+,s_+,s_-) $.
  where the bundles $E_-, E_+$ are equipped with level structures
  $s_+,s_-$ acted on by $GL(\ul{r})$.  The tensor product map
\[ (\varphi_-; : E_- \to L_-, \varphi_+: E_+ \to L_+) \mapsto
(\varphi_-^{\rho_-} \otimes \varphi_+^{\rho_+}: E_-^{\rho_-} \otimes E_+^{\rho_+}, L_-^{\rho_-} \otimes L_+^{\rho_+}) \] 
induces an embedding
  \begin{multline} \label{membed}
    \ol{\M}^{G,\lev}(C,V_-,V_+,d_{\ul{L}}) \to
    \ol{\M}^{G,\lev}(C,V_-^{\rho_-} \otimes V_+^{\rho_+},d_{L_-} d_{L_+})  , \\
    (P, \varphi_-,\varphi_+,s) \mapsto (P, \varphi_-^{\rho_-}
    \otimes \varphi_+^{\rho_+} ,s) .
\end{multline} 
Because the Hilbert-Mumford weights are additive under tensor products
(the weights on the hyperplane bundles are additive, by construction)
the morphism \eqref{membed} preserves the semistability conditions.
By the construction on \cite[p. 277]{schmitt:git}, there exists an
injective finite morphism 
\[\iota:  \ol{\M}^{G,\lev}(C,V_-^{\rho_-} \otimes V_+^{\rho_+},d_G,d_{L_-}
d_{L_+}) \to \ol{\QQ}^{G,\lev}(C,V_-^{\rho_-} \otimes
V_+^{\rho_+},d_G,d_{L_-} d_{L_+}) \]
to a projective scheme, denoted
$\ol{\QQ}^{G,\lev}(C,V_-^{\rho_-} \otimes V_+^{\rho_+},d_G,d_{L_-}
d_{L_+})$
and line bundle $D(\rho_-,\rho_+)$ on the codomain
$\ol{\QQ}^{G,\lev}(C,V_-^{\rho_-} \otimes V_+^{\rho_+},d_G,d_{L_-}
d_{L_+})$
so that a datum $(P,E,L,\varphi,s)$ is semistable iff
$\iota(P,E,L,\varphi,s)$ is git-stable with respect to
$D(\rho_-,\rho_+)$.  Let
\begin{eqnarray*} \ol{\QQ}^{G,\lev}(C,V_-, V_+,d_G,d_{\ul{L}}) &=& \iota(
\ol{\M}^{G,\lev}(C,V_-,V_+,d_G,d_{\ul{L}})) \\
&\subseteq& \ol{\QQ}^{G,\lev}(C,V_-^{\rho_-} \otimes V_+^{\rho_+},d_G,d_{L_-}
d_{L_+}) \end{eqnarray*}
and $D(\LL_-,\LL_+)$ the pull-back of $D(\LL)$.  Then
$\ol{\QQ}^{G,\lev}(C,V_-, V_+,d_G,d_{\ul{L}})$ is projective and the
morphism $\ol{\M}^{G,\lev}(C,V_-,V_+,d_G,d_{\ul{L}})$ to
$\ol{\QQ}^{G,\lev}(C,V_-, V_+,d_G,d_{\ul{L}})$ is finite and
injective, since it is the restriction of a finite and injective
morphism.  The claim on the semistable locus follows by restriction.
\end{proof}

The constructions above give moduli stacks of bundles with sections of
projectivizations.  We extend this to sections of associated bundles
with arbitrary smooth projective fibers as follows:

\begin{definition} {\rm (Moduli stack of bundles with pairs of maps)}  
  Given the projective $G$-variety $X$ and $G$-equivariant embeddings
  \[ \iota_\pm: X \to \P(V_\pm) \]
  let $\ol{\M}^{G,\quot}(C,X,V_-,V_+,d_G,\d_{\ul{L}})$ denote the
  substack of $\ol{\M}^{G,\quot}(C,V_-,V_+, d_G, d_{\ul{L}})$
  consisting of data
\[ ( P, \varphi_-: E_- \to L_-, \varphi_+: E_+ \to L_+)  \] 
so that
\[ ([\varphi_-(z)], [\varphi_+(z)]) \in (\iota_- \times \iota_+)(X)
\subset \P(V_-) \times \P(V_+) \]
for generic $z \in C$.  Including level structures for the bundles
$E_-,E_+$ into the data gives a $GL(\ul{r}_-) \times GL(\ul{r}_+)$-stack
\begin{eqnarray*} 
  \ol{\M}^{G,\quot,\lev}(C,X,V_-,V_+,d_G,\d_{\ul{L}}) &\subset & 
                                                                 \ol{\M}^{G,\quot,\lev}(C,V_-,V_+,d_G,d_{\ul{L}}) 
\end{eqnarray*}
 Let 
                                                $\ol{\QQ}^{G,\lev}(C,X,d_G,d_{\ul{L}})$
                                                denote its image in 
                                                $\ol{\QQ}^{G,\lev}(C,V_-,
                                                V_+,d_G,d_{\ul{L}})$.
                                                Denote the quotient
                                                stack 
                                                \[
                                                \ol{\MM}^{G,\quot}(C,V_-,V_+,d_G,d_{\ul{L}})
                                                =
                                                \ol{\M}^{G,\quot,\lev}(C,V_-,V_+,d_G,d_{\ul{L}})
                                                / (GL(\ul{r}_-) \times
                                                GL(\ul{r}_+)) .\]
\end{definition}

\begin{definition} {\rm (Master space for quot scheme compactifications)}  
  Let $\LL_\pm \to X$ denote the pull-backs of the hyperplane bundle
  under the embeddings $\iota_\pm$ and
  \[ D(\LL_\pm) \to\ol{\QQ}^{G,\lev}(C,X,d_G,d_{\ul{L}}) \]
denote the pull-backs of the line bundles in the Lemma above.  Then
\[ \P ( D(\LL_-) \oplus D(\LL_+)) \to
\ol{\QQ}^{G,\lev}(C,X,d_G,d_{\ul{L}}) \]
is a $\P^1$ bundle, equipped with a natural $GL(\ul{r})$-action and a
polarization
\[ \mO_{\P(D(\LL_-) \oplus D(\LL_+))}(1) \to \P ( D(\LL_-) \oplus
D(\LL_+)) \]  
considered in \eqref{relhyp}.  Let
\[ \ol{\QQ}^G(C,X,\LL_-,\LL_+,d_G,d_{\ul{L}}) = \P ( D(\LL_-) \oplus D(\LL_+))
\qu GL(\ul{r}) \]
denote its git quotient, that is, the quotient of the semistable locus
$\P ( D(\LL_-) \oplus D(\LL_+))^{\ss}$ of objects where some
non-trivial invariant section $\P ( D(\LL_-) \oplus D(\LL_+) ))$ is
non-vanishing, by the action of $GL(\ul{r})$.  The pull-back
\[ \iota^* \P(D(\LL_-) \oplus D(\LL_+) )  \to 
 \ol{\M}^{G,\quot,\lev}(C,X,d) \] 
 is a $\P^1$-bundle and admits a finite injective morphism to
 $\ol{\QQ}^G(C,X,\LL_-,\LL_+,d_G,d_{\ul{L}})$.  Since
 $\P ( D(\LL_-) \oplus D(\LL_+) )) \qu ( GL(\ul{r}_-) \times
 GL(\ul{r}_+)) $ is a good quotient, so is
\[ 
\ol{\M}^{\quot}(C,X,\LL_-,\LL_+,d_G,d_{\ul{L}}) := \iota^* \P(D(\LL_-)
\oplus D(\LL_+) ) \qu GL(\ul{r}_-) \times GL(\ul{r}_+) . \]
\end{definition} 

\begin{proposition} If stable=semistable then the stack
  $\ol{\M}^{G,\quot}(C,X,\LL_-,\LL_+,d_G,d_{\ul{L}})$ is a proper
  Deligne-Mumford stack with a projective coarse moduli space.  The
  group $\C^\times$ acts naturally on
  $\ol{\M}^{G,\quot}(C,X,\LL_-,\LL_+,d_G,d_{\ul{L}})$ and the quotient
  of the semistable locus for
\[ D(\LL_t) := D(\LL_-)^{(1-t)/2} \otimes D(\LL_+)^{((1+t)/2} \] 
(pulled back from $\ol{\QQ}^{G,\lev}(C,X,\LL_-,\LL_+,d_G,d_{\ul{L}})$)
is $\ol{\M}^{G,\quot}(C,X,\LL_t,d_G,d_{\ul{L}})$, the stack of data
\[ (P, L_-,L_+, \varphi_-,\varphi_+) \]  
that are semistable with respect to the stability condition
\[ (\rho_-,\rho_+) = ( (1-t)/2, (1+t)/2). \]
  \end{proposition}
 
  \begin{proof} The coarse moduli space of
    $\ol{\M}^{G,\quot}(C,X,\LL_-,\LL_+,d_G,d_{\ul{L}})$ admits a finite injective
    morphism to the projective variety
    $\ol{\QQ}^G(C,X,\LL_-,\LL_+,d_G,d_{\ul{L}})$.  It follows that the stack
    $\ol{\M}^{G,\quot}(C,X,\LL_-,\LL_+,d_G,d_{\ul{L}})$ has projective coarse
    moduli space. By \cite[Theorem 2.3.4.1]{schmitt:git} and
    \cite[Proposition 2.2.3.7]{schmitt:git} and \cite[Corollary
    2.2.3.4]{schmitt:git} after twisting by a suitable tensor power of
    a positive line bundle on $C$, every bundle with map
    $(P,L_-,L_+,\varphi_-,\varphi_+)$ that is semistable for
    $D(\LL_t)$ for some $t \in [-1,1]$ appears in this quotient
    construction, and so by construction
    $\ol{\M}^{G,\quot}(C,X,\LL_t,d_G,d_{\ul{L}})$ is the git quotient for the
    polarization $D(\LL_t) $.
\end{proof}

The Kontsevich style compactification of the master space is obtained
from similar compactifications applied to the master space for quot
scheme compactifications.  Let $ \ol{\MM}^{G}(C,X, V_-,V_+,d)$ denote
the substack of $ \ol{\MM}^{G}(C,X, d) $ consisting of bundles that
appear in $\ol{\MM}^{G,\quot}(C,V_-,V_+,d_G,d_{\ul{L}})$ together with
a stable section of the associated $X$ bundle $P(X) \to C$; as the
twisting in Definition \ref{levstr} goes to infinity the union of
these loci includes all of $\ol{\MM}^G(C,X,d)$.   The Givental morphism
\eqref{givmorph} for the pair of embeddings
$\iota_\pm: X \to \P(V_\pm)$ gives a morphism
\begin{equation} \label{givpairs} \ol{\MM}^{G}(C,X, V_-,V_+,d) \to
  \ol{\MM}^{G,\quot}(C,V_-,V_+,d_G,d_{\ul{L}})
\end{equation} 
which maps the pair $(C,u: \hat{C} \to P(X))$ to the line bundles
$L_\pm$ and maps $\varphi_\pm : P(V_\pm^\dual) \to L_\pm$ associated
to the morphisms $\iota_\pm \circ u: \hat{C} \to P(V_\pm)$.  For the
rest of the section we fix the degrees $d,d_G,d_{\ul{L}}$ and omit
them to simplify the notation.

\begin{definition} {\rm (Master space for stable gauged maps)}  Denote by $\ol{\M}^{G}(C,X,\LL_-,\LL_+\out{,d})$
  the fiber product of the  Givental morphism  \eqref{givpairs} 
  with the projection from
  the master space
\[\ol{\M}^{G}(C,X,\LL_-,\LL_+\out{,d}) = \ol{\MM}^G(C,X,V_-,V_+\out{,d}) 
\times_{ \ol{\MM}^{G,\quot}(C,V_-,V_+\out{,d_G,d_{\ul{L}}})}
\ol{\M}^{G,\quot}(C,\LL_-,\LL_+\out{,d_G,d_{\ul{L}}}) .\]
\end{definition} 

\begin{proof}[Proof of Proposition \ref{masterprop}]  
  Under the assumption that stable=semistable, $d \in H_+^G(X,\Z)$,
  $\ol{\M}_n^G(C,X,\LL_-,\LL_+\out{,d_G,d_{\ul{L}}})$ is a proper Deligne-Mumford stack.
  Indeed, the fiber product of proper morphisms is proper, and
  $ \ol{\M}^{G,\quot}(C,X,\LL_-,\LL_+\out{,d_G,d_{\ul{L}}}) $ is projective, and so its
  image in the $\ol{\M}^{G,\quot}(C,V_-,V_+\out{,d_G,d_{\ul{L}}})$ has proper coarse
  moduli spaces.  It follows that the coarse moduli space of
  $\ol{\M}^{G}(C,X,\LL_-,\LL_+\out{,d}) $ is proper, hence (assuming finite
  stabilizers) $\ol{\M}^{G}(C,X,\LL_-,\LL_+\out{,d}) $ is also proper.
  Consider the quotients by the circle action.  The git quotients are
  the fiber products
\begin{eqnarray*} 
  \ol{\M}^{G}(C,X,\LL_-,\LL_+\out{,d}) \qu_t \C^\times &=& \ol{\MM}^G(C,X,V_-,V_+\out{,d}) 
\times_{ \ol{\MM}^{G,\quot}(C,X,V_-,V_+\out{,d_G,d_{\ul{L}}}) }
 \left(  \ol{\M}^{G,\quot}(C,X,\LL_-,\LL_+\out{,d_G,d_{\ul{L}}}) \qu_t \C^\times
  \right) \\
&=&  \ol{\MM}^G(C,X,V_-,V_+\out{,d}) 
    \times_{\ol{\MM}^{G,\quot}(C,X,V_-,V_+\out{,d_G,d_{\ul{L}}})
    } \ol{\M}^{G,\quot}(C,X,\LL_t\out{,d_G,d_{\ul{L}}}) 
  \\
&=&  \ol{\M}^G(C,X,\LL_t\out{,d}) \end{eqnarray*}
which proves \eqref{gitquot}.  Item \eqref{inc} is similar. 

We wish to prove \eqref{eqobs}: Assuming stable=semistable as above,
$\ol{\M}_n^{G}(C,X,\LL_-,\LL_+\out{,d}) $ has an equivariant relative
obstruction theory over the moduli stack $\ol{\MM}_n(C)$ of prestable
maps to $C$ of class $[C]$.  The restriction of this obstruction
theory to $\ol{\M}_n^G(C,X,\LL_-,\LL_+\out{,d})$ is perfect and admits
a resolution by vector bundles. To prove this recall the construction
of the obstruction theory for $\ol{\MM}_n^G(C,X)$ from Remark
\ref{obstheoryrem}.  The complex in the relative obstruction theory is
denoted $E_{\ol{\M}_n^G(C,X,\LL_-,\LL_+)}$ in the diagram below,
constructed from the following commutative diagram of complexes of
coherent sheaves.  Let $L_\pi$ denote the cotangent complex relative
cotangent complex of
$\pi: \ol{\M}_n^G(C,X,\LL_-,\LL_+) \to \ol{\MM}_n^G(C,X)$. Consider
the diagram
\[
\begin{diagram} 
\node{L_\pi} \arrow{s} \node{E_{\ol{\M}_n^G(C,X,\LL_-,\LL_+)}}
  \arrow{w}\arrow{s} \node{\pi^* E_{\ol{\MM}_n^G(C,X)}} \arrow{w}
  \arrow{s} \node{L_\pi[1]} \arrow{w} \arrow{s}
  \\ \node{L_\pi} \node{L_{\ol{\M}_n^{G}(C,X,\LL_-,\LL_+)}}
  \arrow{w} \node{ \pi^* L_{\ol{\MM}_n^G({C},X)}} \arrow{w}
  \node{L_\pi[1]}\arrow{w} \end{diagram} \]
where the horizontal lines are exact triangles and the second vertical
arrow $\hat{\phi}$ exists by the third axiom in the definition of a
triangulated category.  The map $\hat{\phi}$ satisfies the axioms of
an obstruction theory by the five lemma applied to the cohomology of
the above diagram.  If the automorphisms are finite then the relative
obstruction theory is perfect: first cohomology
$H^1(E_{\ol{\M}_n^G(C,X,\LL_-,\LL_+)})$ is identified with the Lie
algebra of the group of automorphisms \cite[Theorem 1.5]{ol:def} which
vanish by assumption.  Existence of a resolution follows from the fact
that $\pi$ is a local complete intersection morphism, see
\cite[Appendix]{co:qrr}.

It remains to show item \eqref{embeds}, that for any
$d \in H_2^G(X,\Z)$, the moduli stack
$\ol{\M}_n^G(C,X,\LL_-,\LL_+\out{,d})$ admits an embedding in a
non-singular Deligne-Mumford stack. 
  Let
\[  \ol{\U}^{G,\quot,\lev}(C,X,\LL_-,\LL_+\out{,d_G,d_{\ul{L}}}) 
\to \ol{\M}^{G,\quot,\lev}(C,X,\LL_-,\LL_+\out{,d_G,d_{\ul{L}}})  \] 
denote the universal bundle over the moduli space of bundles with maps
and level structures.
$\ol{\M}^{G,\quot,\lev}(C,X,\LL_-,\LL_+\out{,d_G,d_{\ul{L}}})$,
equipped with the action of $GL(\ul{r}_-) \times GL(\ul{r}_+)$ by
changing the level structure.  Consider the embedding
\[\ol{\U}^{G,\quot,\lev}(C,X,\LL_-,\LL_+\out{,d_G,d_{\ul{L}}}) \times_G X \to \ol{\U}^{\quot,\lev}(C,X,\LL_-,\LL_+\out{,d_G,d_{\ul{L}}})
\times_{(GL(\ul{r}_-) \times GL(\ul{r}_+))} \P(V_- \oplus V_+).\]
The latter is projective (it is the pull-back of the universal bundle
on quot scheme) and so embeds
$(GL(\ul{r}_-) \times GL(\ul{r}_+))$-equivariantly in some $\P^N$.
Then $\ol{\M}_n^G(C,X,\LL_-,\LL_+\out{,d})$ is an embedded substack of
$\ol{\M}_{0,n}(\P^N)/(GL(\ul{r}_-) \times GL(\ul{r}_+))$, with objects
given by stable maps that are compositions of stable sections
\[C \to \ol{\U}^{G,\quot,\lev}(C,X,\LL_-,\LL_+\out{,d_G,d_{\ul{L}}}) \times_G X/(GL(\ul{r}_-) \times GL(\ul{r}_+))\]
with the inclusion into $\P^N/(GL(\ul{r}_-) \times GL(\ul{r}_+))$.
Since $\ol{\M}_{0,n}(\P^N)$ is a non-singular Deligne-Mumford stack,
the quotient $\ol{\M}_{0,n}(\P^N)/(GL(\ul{r}_-) \times GL(\ul{r}_+))$
is a non-singular Artin stack.  The group
$(GL(\ul{r}_-) \times GL(\ul{r}_+))$ acts locally freely on an open
subset $\ol{\M}_{0,n}(\P^N)^{\reg} \subset \ol{\M}_{0,n}(\P^N)$
containing $\ol{\M}_n^G(C,X,\LL_-,\LL_+\out{,d})$ by the
stable=semistable assumption and the quotient
$\ol{\M}_{0,n}(\P^N)^{\reg}/(GL(\ul{r}_-) \times GL(\ul{r}_+))$ is
Deligne-Mumford.
\end{proof} 

\subsection{Analysis of the fixed point contributions} 

In this subsection, we deduce the quantum wall-crossing formula by
applying virtual localization to the master space constructed in the
previous subsection.  By virtual Kalkaman Theorem \ref{vkalkman} for
the $\C^\times$ action on the master space
$ \ol{\M}_n^G(C,X,\LL_-,\LL_+,d)$ we obtain the following preliminary
version of the wall-crossing formula: For any fixed point component
$F \subset \ol{\M}_n^G(C,X,\LL_-,\LL_+,d)$ denote by $\nu_F$ the
normal complex, defined as the $\C^\times$-moving part of the perfect
obstruction theory in Proposition \ref{masterprop}.

\begin{proposition}
Suppose that $d$ is such that stable=semistable for  $\ol{\M}_n^G(C,X,\LL_-,\LL_+,d)$.
Then for any $\alpha \in H_G(X)^n$,
\begin{multline} 
  \int_{ [ \ol{\M}_n^G(C,X,\LL_+,d)] } \ev^* \alpha - \int_{ [
      \ol{\M}_n^G(C,X,\LL_-,d)] } \ev^* \alpha \\ = \Resid_\xi \left(
  \sum_{F} \int_{[F]} \iota_F^* \ev^* \alpha \cup
  \Eul_{\C^\times}(\nu_F)^{-1} \right)
\end{multline}
where $F$ ranges over the fixed point components of $\C^\times$ on
$\ol{\M}^G_n(C,X,\LL_-,\LL_+,d)$ not equal to $\ol{\M}^G_n(C,X,\LL_\pm,d)$.
\end{proposition} 

Next we describe the moduli spaces of circle-fixed gauged maps in
terms of gauged maps with smaller structure group. We begin with the
following remark on actions of central subgroups on the moduli stacks
of gauged maps.  To simplify notation we denote
$\ol{\M}^G_n(C,X) = \ol{\M}^G_n(C,X,\LL_t,d)$ the moduli stack of
$\LL$-semistable gauged maps of class $d \in H_2^G(X,\Z)$.

\begin{proposition} Let $Z \subset G$ a central subgroup.  The action
  of $Z$ on $X$ induces a natural action of $Z$ on
  $\ol{\M}^G_n(C,X)$.
\end{proposition}

\begin{proof} For any principal $G$-bundle $P \to C$, the right action
  of $Z$ on $P$ induces an action on the associated bundle $P(X)$, and
  so on the space of sections of $P(X)$.  The action of $Z$ on the
  space of sections of $P(X)$ preserves Mundet semistability (since
  the parabolic reductions are invariant under the action and the
  Mundet weights are preserved) and so induces an action of $Z$ on
  $\ol{\M}^G_n(C,X)$.
\end{proof}

The following is similar to the
description of fixed point sets in the case of stable maps in
Kontsevich \cite{ko:lo} and Graber-Pandharipande \cite[Section
4]{gr:loc}.

\begin{proposition}    Let $Z \subset G$ be a central subgroup. 
The fixed point locus for the action of $Z$ on $\ol{\M}_n^G(C,X)$ is
the substack whose objects are tuples 
\[(p: P \to C, u: \hat{C} \to P(X),\ul{z})\] 
such that
\begin{enumerate} 
\item $u$ takes values in $P(X^Z)$ on the principal component $C_0$;
\item for any bubble component $C_i \subset \hat{C}$ mapping to a
  point in $C$, $u$ maps $C_i$ to a one-dimensional orbit of $Z$ on
  $P(X)$; and
\item any node or marking of $\hat{C}$ maps to the fixed point set
  $P(X^Z)$.
\end{enumerate} 
\end{proposition}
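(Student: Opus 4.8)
The plan is to identify the $Z$-fixed substack by the standard component-by-component analysis of fixed loci of group actions on spaces of (gauged) stable maps, exactly as in Kontsevich \cite{ko:lo} and Graber--Pandharipande \cite{gr:loc}, the only new feature being that $Z$ sits inside the structure group, so that it acts on the universal target $P(X)\to C$ fibrewise while fixing the base curve $C$. Consequently the $Z$-action on $\ol{\M}^G_n(C,X)$ is trivial at the level of coarse moduli spaces, and the fixed substack is a genuinely stacky object that must be cut out relative to $C$: its points are gauged maps $(P,u\colon\hat{C}\to P(X))$ that carry a $Z$-equivariant structure compatible with the fibrewise $Z$-action on $P(X)$, whose fixed-point substack over $C$ is precisely $P(X^Z)$ (as recorded just before the statement). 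The first step is to fix such a $(P,u)$ together with the isomorphisms $\psi_z$ realising $Z$-invariance, to decompose each $\psi_z$ into a reparametrisation of $\hat{C}$ and a bundle automorphism of $P$, and to read off the resulting constraints on the irreducible components of $\hat{C}$ and on its nodes.

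On the principal component $C_0$, the map $\pr_C\circ u$ restricts to an isomorphism $C_0\xrightarrow{\sim}C$, so any reparametrisation of $\hat{C}$ compatible with $Z$-invariance restricts to the identity on $C_0$; hence $\psi_z|_{C_0}$ is a gauge transformation of $P$ carrying $u|_{C_0}$ to its $z$-translate, which forces $u|_{C_0}$ to take values pointwise in the fibrewise fixed locus $P(X^Z)$. This is a closed condition and gives (a); note that it is precisely the datum of a gauged map from $C$ to $X^Z/G$.

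On a bubble component $C_i$: since the gauged map has class $(1,d)$, the map $\pr_C\circ u$ has degree one, so every bubble is contracted to a point $c\in C$ and $u|_{C_i}$ is a map into the fibre $P(X)_c\cong X$. Being $Z$-fixed up to a reparametrisation of $C_i$ forces, by the usual structure of fixed maps $\P^1\to X$ under the diagonalisable group $Z$ (whose positive-dimensional orbit closures are copies of $\P^1$), that the image of $C_i$ is a single $Z$-orbit closure: either a point of $X^Z$, or the $\P^1$-closure of a one-dimensional orbit, of which $u|_{C_i}$ is then a multiple cover. This gives (b) (a point being read as a degenerate orbit). At a node of $\hat{C}$, continuity places its image in the intersection of the closures of the images of the two adjacent components; if one branch is the principal component the node lands in $P(X^Z)$ by (a), while if both branches are bubbles the node is forced to be a $Z$-fixed point of $P(X)_c$ by the standard argument (it is fixed by the reparametrisations, hence by $Z$). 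In either case the node lies in $P(X^Z)$, giving (c).

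Finally one checks the converse: given $(P,u)$ satisfying (a)--(c), a $Z$-equivariant structure is built componentwise --- the identity on the principal component and at the nodes, and on each nonconstant bubble the structure induced by a chosen $Z$-equivariant parametrisation of the relevant orbit closure --- and these patch into a coherent system; Mundet semistability is preserved by the preceding proposition. The main technical point is the bubble analysis, i.e.\ identifying $Z$-fixed maps $\P^1\to P(X)_c$ with multiple covers of orbit closures; this is routine once one uses that $Z$, being central in a reductive group, is diagonalisable, so the argument runs word-for-word as in Kontsevich. A secondary point to be handled with care is the one flagged above: because $Z\subset G$, ``$Z$-fixed locus'' must be taken in the sense appropriate for localisation --- the fixed substack for the $Z$-action carrying the $Z$-equivariant perfect obstruction theory, detected relative to the base curve $C$ (on which $Z$ acts trivially) --- rather than the naive groupoid of objects isomorphic to their $Z$-translates.
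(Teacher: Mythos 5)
The component-by-component Kontsevich-style analysis you carry out is indeed the approach the paper has in mind (the paper itself only says the result is ``straight-forward, and similar to the case of stable maps in Kontsevich'' and gives no details), and your bubble and node analyses (b), (c) are fine once the right notion of ``fixed'' is pinned down. But the argument for (a) as written has a real gap, and the final paragraph's caveat flags but does not repair it.

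You argue: on $C_0$ the reparametrisation must be the identity, hence $\psi_z|_{C_0}$ is a gauge transformation of $P$ carrying $u|_{C_0}$ to $z\cdot u|_{C_0}$, and ``this forces $u|_{C_0}$ to take values in $P(X^Z)$.'' This last inference is false as stated. Because $Z$ is central in $G$, right multiplication $R_z\colon P\to P$, $p\mapsto pz$, is a $G$-bundle automorphism covering $\mathrm{id}_C$, and the induced map on $P(X)=(P\times X)/G$ is exactly the $Z$-action $[p,x]\mapsto[pz,x]=[p,zx]$. Hence $(R_z,\mathrm{id}_{\hat C})$ is always an isomorphism $(P,\hat C,u)\to(P,\hat C,z\cdot u)$, for \emph{every} gauged map $u$: the requested gauge transformation exists unconditionally, and so the existence of some $\psi_z|_{C_0}$ of the stated type imposes no constraint on $u|_{C_0}$. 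In fact the 2-morphism $\eta_z=(R_z,\mathrm{id})$ exhibits the $Z$-action on $\ol{\M}^G_n(C,X)$ as 2-isomorphic to the identity, so the naive fixed substack (objects isomorphic to their $Z$-translate, coherently) is everything, contradicting the proposition if that were the intended notion.

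What is really being asserted — and what is needed for the later localisation on the master space (see the proof of the Proposition ``Fixed points as reducible gauged maps'') — is the \emph{strict} fixedness of the section: $z\cdot u=u$ on the nose on the principal component, up to domain reparametrisation but not modulo bundle automorphisms. Concretely, on the master space the residual $\C^\times$-action coming from $\P(L_-\oplus L_+)$ is genuinely nontrivial, and a $\C^\times$-fixed point is a gauged map whose automorphism group absorbs the rotation; the resulting infinitesimal automorphism is a section $\zeta$ of the adjoint bundle, and matching it against the rotation forces $\zeta\cdot u=u$ strictly (so $u$ lands in $P(X^\zeta)$ on $C_0$). Your write-up should either work directly in that strict setting (i.e.\ fix the bundle $P$, let $Z$ act on the space of stable sections of $P(X)$ with reparametrisations of $\hat C$ as the only allowed isomorphisms, and take the literal fixed locus), or, if you keep the stacky language, explain why the 2-trivialisation $\eta_z=(R_z,\mathrm{id})$ is \emph{excluded} as the $Z$-equivariant structure being imposed — for instance because the fixed locus being defined is the one pulled back from the master space, where the ambient $\C^\times$ is not inner. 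As it stands the step from ``there is a gauge transformation carrying $u|_{C_0}$ to $z\cdot u|_{C_0}$'' to ``$u|_{C_0}$ lands in $P(X^Z)$'' is exactly the non sequitur, since the hypothesis is automatic.
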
 

We will identify the following stacks with the fixed point sets in the
master space. 

\begin{definition}  \label{fps} {\rm (Fixed point stacks)}  
  For any $\zeta \in \g$ generating a one-parameter subgroup
  $\C^\times_\zeta \subset G$, recall that $G_\zeta$ denotes the
  centralizer of $\C^\times_\zeta$ and so contains $\C^\times_\zeta$
  as a central subgroup.  For each rational $\zeta \in \g$ let
\[\ol{\M}_n^{G_\zeta}(C,X,\LL_t,\zeta,d)
\subset \ol{\M}_n^{G_\zeta}(C,X,\LL_t,d) \]
denote the stack of $\LL_t$-Mundet-semistable morphisms from $C$ to
$X/G_\zeta$ that are $\C^\times_\zeta$-fixed and take values in
$X^{\zeta}$ on the principal component.
\end{definition} 

The Mundet semistability condition for fixed gauged maps simplifies
somewhat in the limit of large polarization, see \cite[Lemma
6.3]{reduc} for more details.  Let $X^{\zeta,t}$ denote the (possibly
empty) locus of $\LL_t$-semistable points in $X^\zeta$.

  \begin{lemma} \label{fixedlargearea} {\rm (Large-area limit of fixed
      gauged maps)} For any class $d$ there exists a $\rho_0$ such
    that for $\rho > \rho_0$, any Mundet-semistable fixed map for
    polarization $\LL_t$ must consist of a principal component mapping
    to $X^{\zeta,t}/G_\zeta$ and bubbles mapping to $X/G_\zeta$.
\end{lemma} 

\begin{proof} 
  Mundet semistability for $\LL^\rho$ implies that the Hilbert weight
  $\mu_H(\sigma,\lambda)$ is at most $\rho^{-1}$ times minus the
  Ramanathan weight $\mu_R(\sigma,\lambda)$, for any $\sigma,\lambda$.
  In particular, this holds $\lambda = - \zeta$ for $\zeta$
  antidominant and $\sigma$ the trivial parabolic reduction, in which
  case the Ramanathan weight for a pair $(P,u)$ with respect to
  $(\sigma,\lambda)$ is simply $- \lan c_1(P), \zeta \ran$ The latter
  is bounded by a constant $c(d) \Vert \zeta \Vert$ depending on
  $d \in H_2^G(X,\Z)$, since $c_1(P)$ is the projection of $d$ onto
  $H_2(BG)$.  So the Hilbert weight $\mu_M(\sigma,\lambda)$ is less
  than $c \Vert \lambda \Vert$ where $c := c(d) \rho^{-1} $.  Choose
  $\rho_0$ sufficiently large so that for any $\rho > \rho_0$, any
  point with Hilbert-Mumford weight $\mu_M(\sigma,\lambda)$ less than
  $c \Vert \lambda \Vert$ for all $\lambda$ is semistable; see for
  example \cite[Lemma 3.12]{ki:coh}.  Then any $\LL_t$-semistable pair
  $(P,u)$, the section $u$ takes values in $P(X^{\zeta,t})$.
\end{proof}

\begin{proposition} [Fixed points as reducible gauged maps] 
\label{red}
 Any $\C^\times$-fixed component of $\ol{\M}^G_n(C,X,\LL_-,\LL_+)$ is in 
 the image of $\ol{\M}^{G_\zeta}_n(C,X,\LL_t,\zeta)$ in 
 $\ol{\M}^G_n(C,X,\LL_-,\LL_+) $ for some $t \in (-1,1)$ where $\zeta 
 \in g$ is a non-zero element, $G_\zeta$ is stabilizer, and 
 $\C^\times_\zeta \subset G_\zeta$ the unparametrized one-parameter 
 subgroup generated by $\zeta$, consisting of maps $u: \hat{C} \to 
 X/G_\zeta$ taking values in $X^\zeta/G_\zeta$ on the principal 
 component, and $X/G_\zeta$ on the bubbles.   
\end{proposition}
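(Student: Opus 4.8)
The plan is to identify the $\C^\times$-fixed locus of the master space $\ol{\M}^G_n(C,X,L_+,L_-)$ by transporting the structure-of-fixed-points analysis already carried out in Lemma \ref{fplem} for the finite-dimensional master space $\ti{X}$ to the present, infinite-dimensional gauged setting. First I would recall from the construction in Section \ref{master2} that $\ol{\M}^G_n(C,X,L_+,L_-)$ is built as a fiber product over $\ol{\MM}^{G,\quot}(C,X)$ of the relative Givental morphism with the master space $\ol{\M}^{G,\quot}(C,X,L_+,L_-)$ for projective bumps, and that the latter is the git quotient $\P(D(L_-)\oplus D(L_+))\qu H$ of Proposition \ref{masterprop}-type constructions; in particular the residual $\C^\times$ acting on $\ol{\M}^G_n(C,X,L_+,L_-)$ is the one with weights $\pm 1$ on $D(L_\pm)$. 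Applying Lemma \ref{fplem}(a) in this git quotient: any $\C^\times$-fixed point represented by $[l]$ with $l \in \P(D(L_-)\oplus D(L_+))$ satisfies $zl = z^{\zeta} l$ for a unique $\zeta$ in the Lie algebra of $H$, but since the objects of the framed moduli space carry a reduction of structure group to $G$ and the $\C^\times$-action commutes with the $G$-action via the polarization, one extracts a $\zeta \in \g$ (well-defined up to conjugacy by the $H$-action on framings) generating the relevant one-parameter subgroup.

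The second step is to translate the condition "$[l]$ is $\C^\times$-fixed" into structural conditions on the underlying gauged map $u:\hat C \to X/G$. A fixed point of the residual $\C^\times$ is either a point with $l_-=0$ or $l_+=0$ — which lands in $\ol{\M}^G_n(C,X,L_\mp)$, the cases we are excluding — or has both components of $l$ nonzero, in which case the projection to the bump moduli must be $G$-fixed up to the $z^\zeta$-action. Unraveling this via the adiabatic-limit description of Mundet semistability in Remark \ref{fixedlargearea}, and the description of $Z$-fixed gauged maps in the first Proposition of Section on "Gauged maps for groups with center" (applied with $Z=\C^\times_\zeta$), the principal component $u_0$ takes values in the $\C^\times_\zeta$-fixed locus $X^\zeta/G_\zeta$ while bubble components over a point of $C$ map to one-dimensional $\C^\times_\zeta$-orbits or are genuine $G$-orbits in $X/G_\zeta$; the value of $t \in (-1,1)$ is determined by which component $X^{\zeta,t}$ of $X^\zeta$ the principal component lands in (its Hilbert–Mumford weight pins down $t$, exactly as in Remark \ref{cond}). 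This exhibits the fixed component as the image of $\ol{\M}^{G_\zeta}_n(C,X,L_t,\zeta)$.

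For the fiber computation, I would again follow Lemma \ref{fplem}(b) verbatim. The map $\ol{\M}^{G_\zeta}_n(C,X,L_t,\zeta) \to \ol{\M}^G_n(C,X,L_+,L_-)$ forgets the choice of identification of $\zeta$ within its $G$-conjugacy class; for $\zeta \in \t$ the set of such identifications compatible with both $G_\zeta$ and the line $\C\zeta$ is $N(G_\zeta)/G_\zeta$ intersected with the stabilizer of $\C\zeta$, which is precisely $W_{\C\zeta}/W_\zeta$ — the same count as in the finite-dimensional case, since the fiber calculation is purely group-theoretic and independent of the target of the gauged maps.

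The main obstacle I anticipate is \emph{not} the identification of which stacks appear, but the verification that the fixed-point stack is genuinely the \emph{image} of $\ol{\M}^{G_\zeta}_n(C,X,L_t,\zeta)$ as opposed to merely containing it or being covered by it — i.e. that the natural map is a locally closed immersion onto a union of connected components (up to the $W_{\C\zeta}/W_\zeta$ gerbe). This requires checking that a $\C^\times$-fixed polarized gauged map admits a \emph{canonical} reduction of structure group to $G_\zeta$ and a \emph{canonical} lift to a $\zeta$-fixed gauged map, i.e. that there are no extra automorphisms or deformations of the fixed locus beyond those visible in $\ol{\M}^{G_\zeta}_n(C,X,L_t,\zeta)$. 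The subtle point is the behavior at nodes and on contracted bubble components: one must confirm, as in the Proposition on fixed loci of $Z$-actions, that nodes map into $P(X^\zeta)$ and that the combinatorial data of the bubble tree is recovered intact, so that the two stacks agree as stacks (with their natural obstruction theories matching under the forgetful map) and not merely on coarse spaces. I would handle this by a deformation-theoretic argument: the relative obstruction theory of Proposition \ref{prewall}-adjacent constructions restricts on the fixed locus to the $\C^\times$-fixed part of the tangent-obstruction complex, which by the index-theoretic description $\Ind(T(X/G))$ decomposes into fixed and moving parts, and the fixed part is exactly the obstruction theory of $\ol{\M}^{G_\zeta}_n(C,X,L_t,\zeta)$ that we will have constructed.
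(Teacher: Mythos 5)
Your plan is correct and, at its core, follows the same argument as the paper: a $\C^\times$-fixed point forces the underlying gauged map to have a non-discrete automorphism group, the infinitesimal automorphism is a section of the adjoint bundle whose value $\zeta\in\g$ at a base point reduces the structure group to $G_\zeta$ and forces the principal component of $u$ to land in $P(X^\zeta)$, and the fiber count $W_{\C\zeta}/W_\zeta$ is the same purely group-theoretic computation as in Lemma \ref{fplem}. The one place where you take a detour is in routing the identification of $\zeta$ through Schmitt's framed git presentation $\P(D(L_-)\oplus D(L_+))\qu H$: you first produce an element of $\Lie(H)$ and then have to translate it into an element of $\g$ via the reduction to $G$, a step you leave somewhat informal. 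The paper sidesteps this entirely by arguing intrinsically: the $\C^\times$-action on the master space is generated, over a fixed point, by an infinitesimal automorphism of the gauged map $(P,u)$ itself, i.e. a section of $P(\g)$, and evaluating that section at a base point of $C$ gives $\zeta\in\g$ directly without passing through the auxiliary group $H$. Your version is workable but introduces an extra translation that the intrinsic argument avoids.

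The concern you raise at the end — that the fixed locus might only be \emph{covered by} rather than \emph{equal to} the image of $\ol{\M}^{G_\zeta}_n(C,X,L_t,\zeta)$, and that the obstruction theories must match — is not actually an issue for this proposition as stated. The proposition asserts only a containment ("is in the image"), and the matching of obstruction theories on the fixed locus is dealt with separately in Corollary \ref{obstheory}, where the relative perfect obstruction theory on $\ol{\M}^{G_\zeta}_n(C,X,L_t,\zeta)$ is \emph{defined} by pulling back the one constructed on the $\C^\times$-fixed locus via \cite{gr:loc}. So the deformation-theoretic check you anticipate needing is absorbed into that later construction rather than being a prerequisite here.
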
 

\begin{proof} Any fixed object of $\C^\times$ in
  $\ol{\M}_n^{G}(C,X,\LL_-,\LL_+)$ not in the fixed point components
  $\ol{\M}_n^{G}(C,X,\LL_\pm)$ is a datum
  $\pi: P \to C, u: \hat{C} \to P(X)$ of $\ol{\M}^{G,\lev}(C,X,\LL_t)$
  for some $t \in (-1,1)$ with a one-parameter group of automorphisms
  $\psi_\alpha: P \to P, \alpha \in \C^\times$ and
  $\phi_\alpha: \hat{C} \to \hat{C}$ trivial on the principal
  component $C_0$ and intertwining the section $u$ in the sense that
  $ \psi_\alpha(X) \circ u = u \circ \phi_\alpha$.  The infinitesimal
  automorphism corresponding to $\alpha_\zeta$ is a section of the
  adjoint bundle $P(\g)$, given by an element
  $\zeta \in P(\g)_z \cong \g$ at a base point $z \in C$. The
  structure group of $P$ reduces to the centralizer $G_\zeta$, and the
  section $u$ takes values in the fixed point set
  $P(X^\zeta) = P(X)^{\zeta}$ of $\zeta$ on the principal component.
\end{proof}

\begin{remark} \label{bubbles}
 The fixed point locus admits a description in terms of ``bubble
 trees'' as follows: There is an isomorphism
\[ \ol{\M}_n^{G_\zeta}(C,X,\LL_t,\zeta) \to \bigcup_{r,[I_1,\ldots,I_r]}
 \left( \M^{G_\zeta,\fr}_r(C,X^{\zeta,t}) \times_{(X^\zeta)^r}
 \prod_{j=1}^r \ol{\M}_{|I_j|+1}(X) \right)^{\C^\times_\zeta} / (G_\zeta)^r \]
where $I_1 \cup \ldots \cup I_r \subset \{ 1 , \ldots, n \}$ is a
disjoint union of subsets describing markings lying on bubble
components and $ \M^{G_\zeta,\fr}_r(C,X^{\zeta,t})$ denotes the moduli
stack of gauged maps with framings at the marked points.  Indeed, by
definition each object of $ \ol{\M}_n^{G_\zeta}(C,X,\LL_t,\zeta)$
consists of a principal component mapping to $X^{\zeta,t}/G_\zeta$ and
a collection of bubble trees in $X$ fixed (up to reparametrization) by
the action of $\C^\times_\zeta$.
\end{remark} 

\begin{corollary} \label{obstheory} {\rm (Obstruction theory for the fixed point components)} 
$ \ol{\M}^{G_\zeta}_n(C,X,\LL_t,\zeta)$ is an Artin stack, and if every
  automorphism group is finite modulo $\C^\times_\zeta$, each substack
  with fixed homology class $d \in H_2^{G_\zeta}(X,\Z)$ is a proper
  Deligne-Mumford stack with a $\C^\times$-equivariant relatively
  perfect obstruction theory over $\ol{\MM}_n(C)$.
\end{corollary}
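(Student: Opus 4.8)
The plan is to deduce the corollary from the master-space construction of Subsection~\ref{master2}, the description of its fixed points in Proposition~\ref{red} and Remark~\ref{bubbles}, and the Graber--Pandharipande \cite{gr:loc} / Kiem--Li \cite{kl:vwc} formalism for fixed loci of equivariant perfect obstruction theories. First I would check the Artin property. By Remark~\ref{bubbles}, $\ol{\M}_n^{G_\zeta}(C,X,L_t,\zeta)$ is the disjoint union over combinatorial types $(r,[I_1,\ldots,I_r])$ of the $\C^\times_\zeta$-fixed loci of the fibre products $\M_r^{G_\zeta,\fr}(C,X^{\zeta,t})\times_{(X^\zeta)^r}\prod_j \ol{\M}_{|I_j|+1}(X)$, taken modulo $(G_\zeta)^r$; since the framed gauged-map stacks and the stable-map stacks are algebraic, fibre products and quotients of algebraic stacks are algebraic, and the fixed locus of a torus action on an algebraic stack is a closed substack, the result is Artin once one removes the complement of the open locus where the principal component is $L_t$-semistable. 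Equivalently, one may take the $\C^\times_\zeta$-fixed substack of the Artin stack $\ol{\MM}_n^{G_\zeta}(C,X)$ and impose that the section land in $X^\zeta$ on the principal component.

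Next I would establish properness and the Deligne--Mumford property after fixing $d$. The central subgroup $\C^\times_\zeta\subset G_\zeta$ acts trivially on $X^\zeta$, hence lies in the automorphism group of every object; thus $\ol{\M}_n^{G_\zeta}(C,X,L_t,\zeta,d)$ is a $\C^\times_\zeta$-gerbe over a stack that is Deligne--Mumford precisely when automorphisms are finite modulo $\C^\times_\zeta$, which is the stated hypothesis and the reason the obstruction theory is taken relative to $B\C^\times_\zeta$. For properness, Proposition~\ref{red} provides a morphism $\ol{\M}_n^{G_\zeta}(C,X,L_t,\zeta,d)\to\ol{\M}_n^G(C,X,L_+,L_-,d)$ which, after rigidifying the central $\C^\times_\zeta$, is finite of degree $|W_{\C\zeta}|/|W_\zeta|$ onto a $\C^\times$-fixed closed substack of the master space; since the master space is a proper Deligne--Mumford stack (Subsection~\ref{master2}), its closed substacks are proper and Deligne--Mumford, hence so is the rigidified source, finiteness of $d$ making everything of finite type (Remark~\ref{fixedlargearea} moreover pins down which values of $t$ occur in the large-area limit).

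For the obstruction theory I would restrict the $\C^\times$-equivariant perfect relative obstruction theory of $\ol{\M}_n^G(C,X,L_+,L_-,d)$ over $\ol{\MM}_n(C)$ — furnished by the proposition on obstruction theories for polarized gauged maps together with the embedding of the master space into a smooth Deligne--Mumford stack — to a fixed component $F$ in the image of $\ol{\M}_n^{G_\zeta}(C,X,L_t,\zeta,d)$. By \cite{gr:loc,kl:vwc} this restriction splits $\C^\times$-equivariantly into a fixed summand and a moving summand: the fixed summand descends to a perfect relative obstruction theory on $F$ over $\ol{\M}_n(C)\times B\C^\times_\zeta$ (the factor $B\C^\times_\zeta$ absorbing the central automorphism, and the relative tangent direction of the $\P$-bundle \eqref{thispi} being $\C^\times$-moving and so contributing to the normal, not the obstruction, complex), while the moving summand is the virtual normal complex $\Ind(T(X/G))_+$ of \eqref{epsplus} and Definition~\ref{contrib}. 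Chasing the obstruction-theory triangle of that proposition onto $F$, the $\C^\times$-fixed part of $\pi^*Rp_*e^*T(X/G)|_F$ is the index $Rp_*e^*T(X^{\zeta,t}/G_\zeta)$ on the principal component together with the standard stable-map obstruction theory in $X$ on the bubbles, glued as in \cite[Example~6.7]{qkirwan}; perfectness is inherited from the master space because, modulo $\C^\times_\zeta$, automorphism groups are finite.

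The step I expect to be the main obstacle is this last identification: transporting the explicit obstruction-theory triangle of the polarized-gauged-map proposition (with its terms $T^\dual_\pi$, $\pi^*Rp_*e^*T(X/G)$ and $\pi^*T^\dual_{\ol{\MM}_n^G(C,X,[-1,1])}$) onto $F$, verifying that its $\C^\times$-fixed part is exactly the intrinsic deformation complex of $\zeta$-fixed $G_\zeta$-gauged maps with principal component in $X^{\zeta,t}$, and that the $\C^\times_\zeta$-weight decomposition reproduces the normal-bundle bookkeeping of the last part of Lemma~\ref{fplem}. This forces one to work carefully with the non-smooth Quot-scheme master space $\ol{\M}^{G,\quot}(C,X,L_+,L_-,d)$ used in the construction and with the $\C^\times_\zeta$-gerbe and rigidification bookkeeping; granting those compatibilities, the remaining arguments are formal.
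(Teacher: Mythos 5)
Your proof follows essentially the same route as the paper: the relatively perfect obstruction theory on $\ol{\M}^{G_\zeta}_n(C,X,L_t,\zeta)$ is obtained by pulling back, via the morphism of Proposition~\ref{red}, the obstruction theory that Graber--Pandharipande \cite{gr:loc} construct on the $\C^\times$-fixed locus of the master space $\ol{\M}^G_n(C,X,L_-,L_+)$. The paper's proof is a single sentence recording exactly this citation; your version supplies the additional bookkeeping (Artin property and properness via the map to the master space, the $\C^\times_\zeta$-gerbe/rigidification, and the fixed/moving splitting into obstruction versus virtual normal complex) that the paper leaves implicit.
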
 

\begin{proof} The relatively perfect obstruction theory
  $ \ol{\M}^{G_\zeta}_n(C,X,\LL_t,\zeta)$ is pulled back from that on
  the $\C^\times$-fixed point set in
  $\ol{\M}_n^G(C,X,\LL_-,\LL_+)^{\C^\times}$ in Proposition \ref{red}.
  The latter is a special case of existence of relatively perfect
  obstruction theories on fixed point loci discussed in \cite{gr:loc}.
\end{proof} 

\begin{lemma} The conormal complex $\nu_t^\dual$ of the morphism
  \[\ol{\M}^{G_\zeta}_n(C,X,\LL_t,\zeta) \to
  \ol{\M}_n^G(C,X,\LL_-,\LL_+)\]
  is isomorphic to the $\C_\zeta^\times$-moving part of the
  obstruction theory in $\ol{\M}^{G_\zeta}_n(C,X,\LL_t)$, whose
  relative part is $(Rp_* e^* T_{X/G})^\dual$.
\end{lemma} 

\begin{proof} By definition the obstruction theory for
  $\ol{\M}_n^G(C,X,\LL_-,\LL_+)$ fits into an exact triangle with that
  of $\ol{\M}^G_n(C,X)$ and a trivial factor corresponding to the
  fiber of $\P(D(\LL_-) \oplus D(\LL_+))$.  Under projection the
  normal complex to the fixed point component
  $\ol{\M}^{G_\zeta}_n(C,\LL_t,X,\zeta)$ of the $\C^\times$-action is
  isomorphic to the moving part of the obstruction theory on
  $\ol{\M}^{G_\zeta}_n(C,\LL_t,X)$, under the identification of
  $\C^\times$ with $\C^\times_\zeta$, as in Lemma \ref{vmaster}.
\end{proof}

Virtual integration gives rise to the fixed point contributions in the
wall-crossing formula.  Let
$[\ol{\M}^{G_\zeta}_n(C, X,\LL_t,\zeta,d)]$ denote the virtual
fundamental class in the homology of the coarse moduli space resulting
from Corollary \ref{obstheory}.  Integration with respect to these
classes yields {\em $\zeta$-fixed gauged Gromov-Witten invariants} of
Definition \ref{contrib}.  The $\zeta$-fixed gauged Gromov-Witten
invariants that appear in the wall-crossing formula involve further
twists by Euler classes of the virtual normal complex: Recall that we
constructed in the previous section a perfect obstruction theory on
$\ol{\M}_n^{G_\zeta}(C,X,\LL_t,\zeta)$, as well as a normal complex
for the embedding in $\ol{\M}_n^G(C,X,\LL_-,\LL_+,\zeta)$.

\begin{definition} \label{qfp}  [Fixed point contributions to wall-crossing
for Gromov-Witten invariants]
\label{contrib} Virtual integration over
  the stacks $\ol{\M}_n^{G_\zeta}(C,X,\LL_t,\zeta,d), d \in H_2^G(X,\Z)$
  defines a ``fixed point contribution''
\begin{multline} 
  \tau_{X,\zeta,t}: QH_{G,\fin}(X) \to \ti{\Lambda}_X^G[\xi,\xi^{-1}],
  \\ \quad \alpha \mapsto \sum_{d \in H_{G}^2(X,\Z)} \sum_{n \ge 0}
  \int_{[\ol{\M}_n^{G_\zeta}(C,X,\LL_t,\zeta,d)]} \frac{q^d}{n!}
  \ev^* (\alpha, \ldots, \alpha) \cup \Eul(\nu_t)^{-1} \cup f^*
  \beta_n \end{multline}
for $\alpha \in H_G(X)$ and a sequence of classes
$\beta_n \in H(\ol{\M}_n(C))$, extended by (multi)linearity of the
integral over $\Lambda_X^G$, and where we omit the restriction map
$H_{G}(X) \to H_{G_\zeta}(X)$ to simplify notation.
\end{definition} 

This completes the construction of the fixed point potential in
Definition \ref{contrib}.  

\begin{remark} The fixed point potential $\tau_{X,\zeta,t}$ takes
  values in $\ti{\Lambda}_X^G$ rather than in $\Lambda_{X,\LL_t}^G$.
  Indeed, the number of possible pairings of classes of gauged maps
  with $c_1(\LL_t)$ in the case that a central subgroup
  $\C^\times_\zeta$ acts trivially can be arbitrarily small, since
  twisting by a character of $\C^\times_\zeta$ does not change the
  pairing.
\end{remark}

\begin{remark} The right-hand-side of the formula in Theorem
  \ref{qkalkcirc} can also be re-written using the quantum Kirwan map
  for
\[ QH_{G_\zeta/\C^\times_\zeta}(X^{\zeta,t}) \to QH_{\C^\times}(X^{\zeta,t} \qu
  (G_\zeta/\C^\times_\zeta)) \] 
  using the adiabatic limit theorem for
  $(G_\zeta/\C^\times_\zeta)$-gauged maps.  However, in our examples
  the gauged Gromov-Witten invariants are always easier to compute
  than the Gromov-Witten invariants of the git quotients, so we have
  left the formula as written.
\end{remark}

\subsection{The wall-crossing formula}

By the adiabatic limit theorem \ref{largerel}, to prove the
wall-crossing formula \ref{qkalkcirc} it suffices to prove a formula
for the difference of gauged potentials.  The following result is an
algebro-geometric generalization of a wall-crossing formula of
Cieliebak-Salamon \cite{ciel:wall} for gauged Gromov-Witten invariants
of quotients of vector spaces defined using symplectic geometry.  We
will deduce our main result Theorem \ref{kalk3} by taking the large
area limit $\rho \to \infty$ of the following Theorem:

\begin{theorem}[Wall-crossing for gauged Gromov-Witten potentials] \label{gwall}
Let $X$ be a smooth projective $G$-variety.  Suppose that $\LL_\pm \to
X$ are polarizations such that semistable=stable for the stack
$\ol{\M}_n^G(C,X,\LL_-,\LL_+)$ of \ref{masterprop}.  The gauged
Gromov-Witten potentials are related by
\begin{equation}
 \tau^G_{X,+} - \tau^G_{X,-} = \sum_{[\zeta],t \in (-1,1)}
\Resid_\xi
 \tau_{X,\zeta,t} \end{equation}
where the sum is over equivalence classes $[\zeta]$ as in \eqref{equiv}.
\end{theorem}

\begin{proof} The statement follows from virtual localization applied
  to $\ol{\M}_n^G(C,X,\LL_-,\LL_+)$ and the identification of fixed
  point contributions in Proposition \ref{red}. \end{proof}

Combining Theorem \ref{gwall} with the adiabatic limit Theorem
\ref{largerel} implies:

\begin{theorem} [Quantum Kalkman formula, arbitrary group case]
\label{kalk3}
Suppose that $X$ is equipped with polarizations $\LL_\pm$ so
stable=semistable for the action of $G$ on $\P(\LL_- \oplus \LL_+)$. Then
the Gromov-Witten invariants of $X \qu_\pm G$ are related by a sum of
twisted gauged Gromov-Witten invariants for subgroups $G_\zeta \subset
G$
\begin{equation}
 \tau_{X \qu_+ G} \kappa_{X,+}^G - \tau_{X \qu_- G} \kappa_{X,-}^G =
 \lim_{\rho \to \infty} \sum_{[\zeta],t \in (-1,1)}
 \Resid_\xi \tau_{X,\zeta,t} ,\end{equation}
where the sum is over $[\zeta]$ in \eqref{equiv}. 
\end{theorem} 

We already gave a simple Example \ref{threepoint} of the formula in
Theorem \ref{kalk3} in the introduction.  We give another Fano example:

\begin{example} (Quantum powers of the first Chern class for the
  blow-up of the projective plane) Suppose that, as in Example
  \ref{classexamples} \eqref{twotorus}, $G = (\C^\times)^2$ acts on
  $X = \C^4$ with weights $(1,0),(1,0),(1,1),(0,1) \in \Z^2$.
  Consider the path from $(-1,2)$ to $(2,-1)$ in $H^2_G(X) \cong \Q^2$
  crossing through the chambers with git quotients
  $\emptyset, \P^2, \Bl(\P^2), \emptyset$ as in Example
  \ref{classexamples} \eqref{twotorus}.  Denote by $X \qu_- G$
  resp. $X \qu_+ G$ the second resp. third quotient.  The quantum
  Kirwan morphism for $\P^2, \Bl(\P^2)$ has no quantum corrections,
  since these varieties are Fano.  Hence
\begin{equation} \label{D0}  D_0 \kappa_{X,\pm}^G (c_1^G(X) ) = c_1(X \qu_\pm G) .
\end{equation}
We consider the wall-crossing formula for invariants with $5$ fixed
markings, corresponding to the small quantum product
\[ c_1(X \qu  G)^{\star 5} \in QH(X \qu  G) .\]   
In the notation of \eqref{gpot} we wish to compute
\[ \partial_{(c_1(X \qu  G),0)}^5 \tau_{X \qu G} (1, [\pt]) \in
\Lambda_X^G \]
where
\[ c_1(X \qu G) \in H( X \qu G), \quad [\pt] \in H(\ol{\M}_{0,5}) .\]
By Example \ref{toric}, the moduli space of gauged maps is, after
fixing the locations of the markings, the quotient of the space of
sections $H^0(\P,\mO_C(d)^\times \times_G X)$ by $G$, with stability
condition corresponding to the stability condition for $X$, see
\cite{qkirwan}.  We take $d = (1,0)$, so that
\[H^0(\P,\mO_C(d)^\times \times_G X) = \C_{(0,1)} \oplus \C_{(1,1)}^{\oplus
  2} \oplus \C_{(1,0)}^{\oplus 4} \]
(or $\C^7$ for short) where for any weight $\mu$, $\C_\mu$ denotes the
one-dimensional representation with weight $\mu$.  We consider the
sequence of polarizations $\LL_t$ corresponding to the vectors
\[ (-1,2), (1,2), (2,1), (2,-1) \in \g^\dual \]
lying in the path of chambers from left to right in Figure
\ref{twochamber}.  The moduli spaces of gauged maps corresponding to
the various chambers are therefore the empty set, $\P^5$, its blow-up
along a projective line $\Bl_{\P^1}(\P^5)$, and the empty set again.
The stabilizers for the wall-crossing terms are the perpendicular
vectors to the walls in Figure \ref{twochamber}.  The first
wall-crossing term for degree $(1,0)$ invariants corresponds to the
direction $\zeta = (1,0)$, for which there is a unique $\zeta$-fixed
stable map with normal weights $(1,0)$ with multiplicity 4 and $(1,1)$
with multiplicity $2$.  The wall-crossing term is
\[ \Resid_{\xi_1} \frac{ (3 \xi_1 + 2\xi_2)^5 }{ \xi_1^4 (\xi_1 +
  \xi_2)^2} |_{\xi_2 =0} = 243 .\]
Using \eqref{D0} we obtain that $c_1(\P^2)^{\star 5} \supset 243
q_1 [\on{pt}] $ which is shorthand for saying that the coefficient of
$q_1 [\on{pt}]$ is $243$; here $\star$ is the small quantum product,
as expected since
\[ c_1(\P^2)^{\star 5} = (3 \omega)^{\star 5} = 243 \omega^{\star 3}
\star \omega^{\star 2} = 243 q_1 [\on{pt}] .\]
The second wall-crossing term corresponds to the change of quotient
from $\P^2$ to $\Bl(\P^2)$ with $\zeta = (1,-1)$ is (after fixing the
five points on $\P$) an integration over $\ol{\M}_0^G(\P,X,\zeta,t)
\cong \P$, the quotient of the $\zeta$-fixed summand
$\C_{(1,1)}^{\otimes 2}$ by $\C^\times$,
\[ \hh \Resid_\xi \int_{[\P]} \kappa_{\C^7}^{\C^\times} \frac{ (3 \xi_1
  + 2\xi_2)^5 }{ \xi_1^4 \xi_2} \]
where $\kappa_{\C^7}^{\C^\times}$ is the descent map in equivariant
cohomology from $\C^7$.  Using that $\kappa_{\C^6,\C^\times}(\xi_1)$
is the generator $\omega$ of $H^2(\P)$ this gives
\begin{eqnarray*}
 \hh \Resid_\xi \int_{[\P]} \frac{ \left(5 \omega + \xi\right)^5}{ \left(\omega +
  \xi\right)^4 \left(\omega - \xi\right)} &=& \hh \Resid_\xi \int_{[\P]} - \xi^{-5} \left(25 \omega
\xi^4 + \xi^5\right)\left(1 - 4 \frac{\omega}{\xi}\right) \left(1 + \frac{\omega}{\xi}\right) \\ &=& - 11 .\end{eqnarray*}
It follows that the coefficient of $[\on{pt}]$ in
$c_1(\Bl(\P^2))^{\star 5}$ is $232 q_1 $ .

For the transition to the empty chamber, as before
$\kappa_{\C^4}^{\C^\times}$ maps to $\xi_1$ to the generator $\omega$
of $H^2(\P^3)$ while $\xi_2$ maps to the parameter $\xi$ for the
residual $\C^\times$-action. The wall-crossing term is
\begin{eqnarray*} 
 \Resid_\xi 
\int_{[\P^3]} \kappa_{\C^4}^{\C^\times} \frac{ \left(3\xi_1 +
  2\xi_2\right)^5}{ \xi_2\left(\xi_1 + \xi_2\right)^2} &=& 
 \Resid_\xi  
\int_{[\P^3]} \frac{ \left(3\omega +
  2\xi\right)^5}{ \xi\left(\omega + \xi\right)^2} \\
&=& 
\Resid_\xi  
\int_{[\P^3]}  
\left(3 \omega\right)^3 \left(2\xi\right)^2 10 \xi^{-3}  + 
\left(3 \omega\right)^2 \left(2\xi\right)^3 10 \xi^{-3} \left(-2 \frac{\omega}{\xi}\right)^2 
\\ && + \left(3 \omega\right)\left(2 \xi\right)^4 5 \xi^{-3}\left(3 \frac{\omega^2}{\xi^2}\right) 
+ \left(2\xi\right)^5 \xi^{-3} \left(- 4 \frac{\omega^3}{\xi^3}\right) \\
&=& 1080 - 1440 + 720 - 128
 \\
&=& 232
 .\end{eqnarray*}

This is as expected since the quotient $X \qu_t G$ is empty in the
last chamber.  One can verify that the expansion of
$c_1(\Bl(\P^2))^{\star 5}$ contains $232 q_1 [\on{pt}] $ using the
known quantum multiplication table for $\Bl(\P^2)$ from
Crauder-Miranda \cite{cm:dp}:
\[ \begin{array}{l|lll}
\star &   e              &   f             & p \\ \hline
e     & -p + eq^e + xq^f &  p - eq^e       & fq^f \\ 
f     &                 &    eq^e          &  xq^{e + f}  \\
p     &                  &                  &  (e + f)q^{e + f}
\end{array} \]
%
%
where $e,f,p,x \in H(\Bl(\P^2))$ are the exceptional resp. fiber
resp. point resp. fundamental classes respectively, $q_1 = q^{e + f}$,
and using a little help from Mathematica.  (We thank Eric Malm for
teaching us how to get Mathematica to compute these coefficients.)
\end{example}

\section{Invariance under crepant transformations} 
\label{cy}

Ruan and others, see \cite{cr:crep}, conjectured that {\em crepant
  resolutions} induce equivalences in Gromov-Witten theory.  
We prove a version Theorem \ref{cytype} of Ruan's conjecture for  
crepant birational equivalences induced by variation of git. 
 
\subsection{Crepant transformations} 

We consider the birational transformations that are crepant in the
following sense: 

\begin{definition} Suppose that $Y_\pm$ are smooth proper
  Deligne-Mumford stacks with projective coarse moduli spaces related
  by a birational equivalence given by open embeddings
\[ \begin{tikzcd}
Y_-  & Z  \arrow[l,"\phi_-"]  \arrow[r,swap,"\phi_+"] & Y_+ \end{tikzcd} .\]  
Such a birational equivalence is called {\em crepant} (or a {\em
  $K$-equivalence}) if $\phi$ extends to morphisms
$\ti{\phi}_\pm: \ti{Z} \to Y_\pm$ from a smooth stack $\ti{Z}$ with
projective coarse moduli space such that the pullbacks of the
canonical divisors to $\ti{Z}$ are equal, as in Kawamata
\cite{kaw:dk}.  This ends the definition.
\end{definition} 

A well-known conjecture of Li-Ruan \cite{liruan:surg}, Bryan-Graber
\cite{bryan:crep} and others (perhaps motivated by physics papers such
as Witten \cite{wi:ph}) that in such a situation (not necessarily
arising from geometric invariant theory) the Gromov-Witten theories of
$Y_-$ and $Y_+$ are equivalent, in a sense to be made precise.  Many
special cases have been proved, see for example Iwao-Lee-Lin-Wang
\cite{lee:flop}, Lee-Lin-Wang \cite{lee:fmi},
Boissi{\'e}re-Mann-Perroni \cite{bmp:com}, Bryan-Gholampour
\cite{bryan:ade}, \cite{bryan:hh}, Bryan-Graber-Pandharipande
\cite{bryan:c2z3}, Coates-Corti-Iritani-Tseng \cite{coates:computing}
and Coates-Iritani-Tseng \cite{coates:wall}.

We specialize to the case that the birational transformation is
obtained by variation of git quotient.  Suppose that $X$ is a smooth
projective $G$-variety, and $X \qu_\pm G$ are git quotients obtained
from polarizations $\LL_\pm \to X$.  Since the semistable loci are
open, the identity on the locus semistable for both polarizations
induces a birational transformation from $X \qu_- G$ to $X \qu_- G$.
We call such a birational transformation {\em of git type}.  Suppose
that stable=semistable for $\P(\LL_- \oplus \LL_+)$ so that the master
space $\ti{X} = \P(\LL_- \oplus \LL_+) \qu G$ is a smooth proper
Deligne-Mumford stack.

\begin{definition} \label{crepant} A birational transformation of git
  type $\phi = (\phi_-,\phi_+)$ will be called {\em crepant} if the
  sum of the weights $\mu_i(F) \in \Z$ of $\C^\times_\zeta$ on the
  normal bundle to any fixed point component $F \subset X^{\zeta,t}$,
  counted with multiplicity, vanishes:
  \[\sum_{i=1}^{\codim(F)} \mu_i(F) = 0, \quad \forall F \subset
  X^{\zeta,t} .\]
\end{definition}

\begin{definition} 
  The definition of crepant transformation of git type is a special
  case of the definition of crepant transformation ($K$-equivalence)
  in Kawamata \cite{kaw:dk} etc.  Indeed, Kempf's descent lemma
  \cite[Theorem 2.3]{dr:pi} and the crepant condition together imply
  that the canonical bundle descends to each singular quotient, from
  which the canonical bundles on $X \qu_\pm G$ are pulled back. The
  fiber product of these morphism is the required smooth stack in the
  definition of crepant transformation.
\end{definition}

\subsection{The Picard action} 

The proof of invariance in Theorem \ref{cytype} uses a symmetry of the
fixed point contributions under an action of the Picard stack
\[\Pic(C) := \Hom(C, B\C^\times)\]
of line bundles on $C$; a similar action was used in a proof of a
generalized Verlinde formula in \cite{tw}.  The Lie algebra $\g_\zeta$
has a distinguished factor $\C \zeta$ generated by $\zeta$, and using
an invariant metric the weight lattice of $\g_\zeta$ has a
distinguished factor $\Z$ given by its intersection with the Lie
algebra of $\C^\times_\zeta$.  After passing to a finite cover, there
exists a splitting
$G_\zeta \cong G_\zeta/\C^\times_\zeta \times \C^\times_\zeta$.

We define an action of the Picard group on the moduli stack as
follows.  Recall that an object of
$\ol{\M}_n^{G_\zeta}(C,X,\LL_t,\zeta)$ consists of a tuple
$(P,\hat{C},u)$ where $P \to C$ is a $G$-bundle and
$u: \hat{C} \to P(X)$ is $\zeta$-fixed, in particular, the restriction
of $u$ to the principal component of $C$ maps into the fixed point
locus $X^\zeta$.  Let $\C^\times_\zeta \subset G_\zeta$ denote the
subgroup of $G_\zeta$ generated by $\zeta \in \g_\zeta$. 

\begin{definition} {\rm (Picard action)} For $Q \to C$ a line bundle
  and $(P,\hat{C},u)$ an object of
  $\ol{\M}_n^{G_\zeta}(C,X,\LL_t,\zeta)$ define
\begin{equation} \label{picact}
 Q (P,\hat{C},u) := (P \times_{\C^\times_\zeta} Q, \hat{C}, v
 ) \end{equation} 
where $v$ is defined as follows: We have an isomorphism of associated bundles
\[ (P \times_{\C^\times_\zeta} Q) (X^\zeta) \cong P(X^\zeta) \] 
since the
action of $\C^\times_\zeta$ on $X^\zeta$ is trivial. Hence the
principal component of $u$, which is a section of $P(X^\zeta)$ induces
a corresponding section of
$ ( P \times_{\C^\times_\zeta} Q) (X^\zeta)$.  Each bubble component
of $u$ maps into a fiber of $P(X)$, canonically identified with $X$ up
to the action of $G_\zeta$, and so induces a corresponding map into a
fiber of $( P \times_{\C^\times_\zeta} Q) (X)$, well-defined up to
isomorphism. 
\end{definition} 

The action of the Picard group preserves semistable loci in the large
area limit.  Indeed, because the Mundet weights
$\mu_M(\sigma,\lambda)/\rho$ approach $\mu_H(\sigma,\lambda)$ as
$\rho \to \infty$, the limiting Mundet weight is unchanged by the
shift by $Q$ in the limit $\rho \to \infty$ and so Mundet
semistability is preserved, see Remark \ref{fixedlargearea}.  It
follows that for $\rho$ sufficiently large the action of an object $Q$
of $\Pic(C)$ induces an isomorphism
\begin{equation} \label{translate} \cS^\delta:
  \ol{\M}_n^{G_\zeta}(C,X,\LL_t,\zeta,d)
 \to \ol{\M}_n^{G_\zeta}(C,X,\LL_t,\zeta,d + \delta) \end{equation}
where $\delta = c_1(Q)$.  The action lifts in an obvious way to the
universal curves
$\ol{\cC}_n^{G_\zeta}(C,X,\LL_t,\zeta,d) \to
\ol{\cC}_n^{G_\zeta}(C,X,\LL_t,\zeta,d + \delta) $,
denoted with the same notation.

\begin{lemma} \label{pic} The action of $Pic(C)$ in \eqref{translate}
  induces isomorphisms of the relative obstruction theories, and so
  the Behrend-Fantechi virtual fundamental classes. Furthermore, the
  action preserves the class $\ev^* \alpha$ for any
  $\alpha \in H_{G_\zeta}(X)^n$. \end{lemma}

\begin{proof} The action of $\Pic(C)$ lifts to the universal curves,
  denoted by the same notation. Since the relative part of the
  obstruction theory on $\ol{\M}_n^{G_\zeta}(C,X,\LL_t,\zeta,d)$ is
  the $\C^\times_\zeta$-invariant part of $(Rp_* e^* T_{X/G})^\dual$
  up to the factor $\C\zeta$, the isomorphism $S^\delta$ preserves the
  relative obstruction theories on
  $ \ol{\M}_n^{G_\zeta}(C,X,\LL_t,\zeta,d) $ and
  $ \ol{\M}_n^{G_\zeta}(C,X,\LL_t,\zeta,d + \delta) $ and so the
  Behrend-Fantechi virtual fundamental classes
  $[\ol{\M}_n^{G_\zeta}(C,X,\LL_t,\zeta,d)]$ and
  $[ \ol{\M}_n^{G_\zeta}(C,X,\LL_t,\zeta,d + \delta)]$. (Note that on
  the principal component, the obstruction theory is
  $(Rp_* e^* TX^\zeta/G_\zeta)^\dual$ which is unchanged by the tensor
  product by $\C^\times_\zeta$-bundles. On the bubble components
  $(Rp_* e^* TX/G)^\dual$ is unchanged by the tensor product since the
  pull-back of $Q$ to $\hat{C}$ is trivial.) Since the evaluation map
  is unchanged by pull-back by $\cS^\delta$ (up to isomorphism given
  by twisting by $Q$), the class $\ev^* \alpha$ is preserved.
\end{proof}

\begin{remark} \label{distrib} To interpret the main result we recall
  the basic definitions from the Schwartz theory of {\em
    distributions} for which the standard reference is H\"ormander
  \cite{ho:an}.  We only need the case of distributions on
  the unit circle $S$.  Denote by $\D'(S)$ the space of continuous
  linear functionals on the smooth functions on $S$, and by
  $\mE'(S) \subset \D'(S)$ the space of {\em tempered distributions}.
  Fourier transform defines an isomorphism of $\mE'(S)$ with the space
  of functions on $\Z$ with polynomial growth.  We view $q$ as a
  coordinate on the punctured plane $\C^\times$.  Any formal power
  series in $q,q^{-1}$ defines a distribution on $S$, which is
  tempered if the coefficient of $q^d$ has polynomial growth in $d$.
  In particular $\sum_{d \in \Z} q^d$ is the delta function at
  $q = 1$, and has Fourier transform the constant function with value
  $1$.  Any distribution of the form $\sum_{d \in \Z} f(d) q^d$, for
  $f(d)$ polynomial, is a sum of derivatives of the delta function
  (since Fourier transform takes multiplication to differentiation)
  and so is almost everywhere zero.
\end{remark} 

\subsection{Proof of invariance}

In this section we prove Theorem \ref{cytype}.  We study the
dependence of the fixed point contributions 
with respect to the Picard action defined in \eqref{picact}. 
 Suppose
that $Q$ is a $\C^\times_\zeta$-bundle of first Chern class the
generator of $H^2(C)$, after the identification
$\C^\times_\zeta \to \C^\times$.  Denote the corresponding class in
$H_2^{G_\zeta}(X^\zeta)$ by $\delta$.  Consider the action of the
$\Z$-subgroup of $\Pic(C)$ generated by $Q$.  The contribution of any
component $\ol{\M}_n^{G_\zeta}(C,X,\LL_t,\zeta,d)$ of class
$d \in H_2^G(X)$ differs from that from the component induced by
acting by $Q^{\otimes r}$, of class $d + r \delta$, by the ratio of
Euler classes of the virtual normal complex $ (Rp_* e^* T(X/G))^+$
  \begin{equation} \label{eulerdiff} \frac{\Eul_{\C^\times_\zeta}( (Rp_* e^*
    T(X/G) )^+)}{ \Eul_{\C^\times_\zeta}( \cS^{r\delta,*}
    (Rp_* e^* T(X/G))^+)}\in H(
    \ol{\M}_n^{G_\zeta}(C,X,\LL_t,\zeta,d))
\end{equation}
which we now compute.
Let $X^{\zeta,t}$ be the component of the fixed point set $X^\zeta$
which is semistable for $t \in (-1,1)$.  For simplicity, we assume that
$X^{\zeta,t}$ is connected; in general, one should repeat the
following argument for each connected component.    Let 
\[ [\nu_{X^{\zeta,t}/G_\zeta} ] = [TX/TX^{\zeta,t}] - [\g/\g_\zeta] \]
denote the class of the virtual normal complex for
$X^{\zeta,t} /G_\zeta \to X/G$. Consider the decomposition into
$\C^\times$-bundles
\[ \nu_{X^{\zeta,t}/G_\zeta}  = \bigoplus_{i=1}^{m_t}
\nu_{X^{\zeta,t},i} \]
where $\C^\times$ acts on $\nu_{X^{\zeta,t},i}$ with non-zero weight
$\mu_i \in \Z$ and $m_t$ is the codimension of $X^{\zeta,t}$, which
for simplicity we assume is constant.  Then $ e^* T(X/G)$ is
canonically isomorphic to $\cS^{r\delta} e^* T(X/G)$ on the bubble
components, since the $G$-bundles are trivial on those components.
Because the pull-back complexes are isomorphic on the bubble
components, the difference
\[ (e^* T(X/G))^+ - \cS^{r\delta,*} (e^* T(X/G))^+ 
\in K( \ol{\M}_n^{G_\zeta}(C,X,\LL_t,\zeta,d)) \] 
is the pullback of the difference of the restrictions to the principal
part of the universal curve, that is, the projection on the second
factor
\[p_0: C \times \ol{\M}_n^{G_\zeta}(C,X,\LL_t,\zeta) \to
\ol{\M}_n^{G_\zeta}(C,X,\LL_t,\zeta) .\]
These restrictions are given  by 
\begin{eqnarray}
 (e^* T(X/G))^{+,\on{prin}} &\cong& \bigoplus_{i=1}^{m_t} e^*
\nu_{X^{\zeta,t},i} - \g_\zeta, \\  \label{restr2}
\cS^{r\delta,*} (e^* T(X/G))^{+,\on{prin}}
&\cong & 
\bigoplus_{i=1}^{m_t}
e^* \nu_{X^{\zeta,t},i} \otimes (e^* Q \times_{\C^\times_\zeta}
\C_{r\mu_i}) - \g_\zeta
\end{eqnarray}
where $e$ is the map from the universal curve to $C$.  The projection
$p_0$ is a representable morphism of stacks given as global quotients.
To compute the difference in push-forwards we apply
Grothendieck-Riemann-Roch for such stacks \cite{toen:rr},
\cite{ed:rr}.  The Todd class on the curve is 
\[ \Td_\cC = 1 + (1-g) \omega_C \]  
so
\begin{equation} \label{td}
 \Td_{\cC \times \M} = (1-g)\omega_C + \pi^* \Td_{\M} . \end{equation}  
Let
\[z: \ol{\M}_n^{G_\zeta}(C,X,\LL_t,\zeta) \to C \times 
\ol{\M}_n^{G_\zeta}(C,X,\LL_t,\zeta)\] 
be a constant section of $p_0$.  Then
\begin{eqnarray*}
\Td_\M  \Ch(\cS^{r\delta,*} Rp_* (e^* T(X/G))^+) &=& 
p_{0,*} (\Td_{C \times \M} 
\Ch(\cS^{r\delta,*}  e^* T(X/G))^+ )\\
&=& (1-g) z^* \Ch(\cS^{r\delta,*} (e^*  T(X/G))^+)+\\ &&  \Td_\M 
p_{0,*} \Ch(\cS^{r\delta,*} e^*  T(X/G)^+) \end{eqnarray*} 
by Grothendieck-Riemann-Roch and \eqref{td}.  Continuing we have 
\begin{eqnarray*} \ldots &=& (1-g) z^* \Ch(e^* (T(X/G))^+)+ \\ && \Td_\M
  p_{0,*} \sum_{i=1}^{m_t} \Ch(e^* \nu_{X^{\zeta,t},i}) \Ch( (e^* Q
  \times_{\C^\times_\zeta} \C_{r\mu_i}))
  \\
                         &=& (1-g) z^* \Ch(\cS^{r\delta,*} e^*
                             (T(X/G))^+)+ \\ && \Td_\M p_{0,*}
                                              \sum_{i=1}^{m_t}
                                              \Ch(e^*
                                              \nu_{X^{\zeta,t},i}) (1
                                              + r \mu_i
                                              \omega_C) \end{eqnarray*} 
since the bundle $Q$ is trivial on any fiber of $C \times \M \to \M$
and using \eqref{restr2}.  Continuing using multiplicativity of the
Chern character and Grothendieck-Riemann-Roch again this equals
\begin{eqnarray*} 
  \ldots                       &=& p_{0,*}( \Td_{C \times \M} \Ch\left( Rp_*
                             e^* (T(X/G))^+ \oplus \bigoplus_{i=1}^{m_t}
                             ( e^* \nu_{X^{\zeta,t},i} )^{\oplus
                             r \mu_i}\right) \\
                         &=& \Td_\M \Ch\left( Rp_* e^* (T(X/G)))^+
                             \oplus \bigoplus_{i=1}^{m_t} (z^* e^*
                             \nu_{X^{\zeta,t},i})^{\oplus r
                             \mu_i}\right).
\end{eqnarray*}
 Hence
\begin{equation} \label{diff}
 \Ch(\cS^{r\delta,*} Rp_* e^* (T(X/G))^+) = 
\Ch\left( Rp_* e^* (T(X/G))^+ \oplus 
 \bigoplus_{i=1}^{m_t} (z^* e^* \nu_{X^{\zeta,t},i})^{\oplus 
   r \mu_i}\right)
 \end{equation}
 The equality of Chern characters above implies by injectivity of the
 Todd map \cite{ed:rr} an equality 
 \[ [\cS^{r\delta,*} \Ind( T(X/G))^+] = [ \Ind(T(X/G))^+ \oplus
 \bigoplus_{i=1}^{m_t} (z^* e^* \nu_{X^{\zeta,t},i})^{\oplus r
   \mu_i}] . \]
 By the splitting principle we may assume that the
 $\nu_{X^{\zeta,t},i}$ are line bundles.  The difference in Euler
 classes \eqref{eulerdiff} is therefore given by the Euler class of
 the last summand in \eqref{diff}
\begin{eqnarray*}
\frac{\Eul_{\C^\times_\zeta}( Rp_* e^* T(X/G)^+) }{\Eul_{\C^\times_\zeta}(
\cS^{r\delta,*} Rp_* e^* T(X/G)^+)} &=&
\Eul_{\C^\times_\zeta} \left( \bigoplus_{i=1}^{m_t} (z^* e^*
\nu_{X^{\zeta,t},i})^{\oplus \mu_ir} \right) \\ &=& \prod_{i=1}^{m_t} (
\mu_i \xi + c_1(\nu_{X^{\zeta,t},i}) )^{\mu_ir } \\ &=&
\prod_{i=1}^{m_t} \left( \xi + \frac{c_1(\nu_{X^{\zeta,t},i})}{\mu_i} \right)^{\mu_ir }
\prod_{i=1}^{m_t} \mu_i^{\mu_i r} .
\end{eqnarray*}
Let 
\[ \mu = \sum_{i=1}^{m_t} \mu_i \in \Z \]
be the sum of weights of the action of $\C^\times_\zeta$ at the fixed
point component $X^{\zeta,t}$.  Expanding out the product we obtain
\begin{multline} 
 \prod_{i=1}^{m_t} \left( \xi + \frac{c_1(\nu_{X^{\zeta,t},i})}{\mu_i}
   \right)^{\mu_ir } 
= \xi^{r \mu } + \xi^{r \mu - 1} \left( r \sum_{i=1}^k
c_1(\nu_{X^{\zeta,t},i}) \right) \\ + \xi^{r \mu - 2} \left( r^2 \sum_{i
  \neq j} c_1(\nu_{X^{\zeta,t},i}) c_1(\nu_{X^{\zeta,t},j}) + \sum_{i=1}^k r
 \left(r - \frac{1}{\mu_i} \right) c_1(\nu_{X^{\zeta,t},i})^2 \right) + \ldots \end{multline}
and $\ldots$ indicates further terms with the property that the
coefficient of $\xi^{r \mu- m}$ is polynomial in $r$.  By the crepant
assumption in Definition \ref{crepant}, the sum of the weights is
$\mu = \sum_{i=1}^{m_t} \mu_i = 0.$
Write 
\[ 
\tau_{X,\zeta,t}  = \sum_d 
\tau_{X,\zeta,d,t} \] 
where $\tau_{X,\zeta,d,t}$ is the contribution from gauged maps of
class $d$.  For any singular value $t \in (-1,1)$,
\begin{multline}
 \sum_{r \in \Z} q^{d + \delta r} \tau_{X,\zeta, d + \delta
   r,t}(\alpha) = \sum_{r \in \Z} \prod_{i=1}^k \mu_i^{\mu_i r}q^{d +
   r \delta} \int_{[\ol{\M}_n^{G_\zeta}(C,X,\LL_t,\zeta,d)]} \\  \left( 1 +
 \xi^{-1} \sum_{i=1}^k r c_1(\nu_{X^{\zeta,t},i}) + \ldots \right)  \ev^*
 (\alpha, \ldots, \alpha)  \cup \Eul(\nu_t)^{-1} \cup f^* \beta_n
 \end{multline}
 where as before the terms $\ldots$ are polynomial in the $r$.  In the
 language of distributions, for any polynomial $f(r)$ in $r$,
\begin{equation} \label{form} 
\sum_{r \in \Z} f(r) \left( \prod_{i=1}^k \mu_i^{\mu_i} q^\delta \right)^r
=_{a.e.} 0\end{equation} 
vanishes almost everywhere in $q$, being a function times a sum of
derivatives of delta functions in $q$, see Remark \ref{distrib}.)
Since
$\kappa_X^{G,+} \tau_{X \qu_+ G} - \kappa_X^{G,-} \tau_{X \qu_- G}$ is
a sum of wall-crossing terms of the form \eqref{form}, this completes the proof of Theorem
\ref{cytype}. 

\begin{remark} 
  The standard formulation of the crepant transformation conjecture in
  Coates-Ruan \cite{cr:crep} etc. uses analytic continuation.  The
  above results say nothing about convergence of the gauged
  Gromov-Witten potentials, so it is rather difficult to put the
  version above in this language.  However, if the potentials
  $\kappa_X^{G,+} \tau_{X \qu_+ G} $ and
  $\kappa_X^{G,-} \tau_{X \qu_- G}$ have expressions as analytic
  functions with overlapping regions of definition on the torus
  $H^2_G(X,\Q)/H^2_G(X,\Z)$ with coordinate $q$, then they are equal
  on that region.
\end{remark}

\def\cprime{$'$} \def\cprime{$'$} \def\cprime{$'$} \def\cprime{$'$}
\def\cprime{$'$} \def\cprime{$'$}
\def\polhk#1{\setbox0=\hbox{#1}{\ooalign{\hidewidth
      \lower1.5ex\hbox{`}\hidewidth\crcr\unhbox0}}} \def\cprime{$'$}
\def\cprime{$'$}

\end{document}